\newcommand{\vnten}{\overline\otimes}
\newcommand{\mc}[1]{\mathcal{#1}}
\newcommand{\ip}[2]{{\langle {#1} , {#2} \rangle}}
\newcommand{\op}{{\operatorname{op}}}
\newcommand{\G}{{\mathbb G}}
\theoremstyle{plain}
\newtheorem{proposition}{Proposition}[section]
\newtheorem{theorem}[proposition]{Theorem}
\newtheorem{corollary}[proposition]{Corollary}
\newtheorem{lemma}[proposition]{Lemma}
\theoremstyle{definition}
\newtheorem{definition}[proposition]{Definition}
\begin{document}

\large
\title{Multipliers of locally compact quantum groups via Hilbert C$^*$-modules}
\author{Matthew Daws}
\maketitle

\begin{abstract}
A result of Gilbert shows that every completely bounded multiplier $f$ of the Fourier
algebra $A(G)$ arises from a pair of bounded continuous maps $\alpha,\beta:G
\rightarrow K$, where $K$ is a Hilbert space, and $f(s^{-1}t) = (\beta(t)|\alpha(s))$
for all $s,t\in G$.  We recast this in terms of adjointable operators acting
between certain Hilbert C$^*$-modules, and show that an analogous construction
works for completely bounded left multipliers of a locally compact
quantum group.  We find various ways to deal with right multipliers: one of these
involves looking at the opposite quantum group, and this leads to a proof that the
(unbounded) antipode acts on the space of completely bounded multipliers, in a way which
interacts naturally with our representation result.
The dual of the universal quantum group (in the sense of Kustermans) can be identified with
a subalgebra of the completely bounded multipliers, and we show how this fits into our
framework.  Finally, this motivates a certain way to deal with two-sided multipliers.

(2010) Subject classification: 43A22, 46L08, 46L89 (Primary);
22D15, 22D25, 22D35, 43A30 (Secondary).

Keywords: Locally compact quantum group, multiplier, Hilbert C$^*$-module,
Fourier algebra
\end{abstract}

\section{Introduction}

Let $G$ be a locally compact group $G$, and let $A(G)$ be the Fourier algebra of $G$, the
subalgebra of $C_0(G)$ given by coefficient functionals of the left regular
representation $\lambda$ of $G$ on $L^2(G)$, see \cite{eymard}.  A \emph{multiplier}
of $A(G)$ is a continuous function $f\in C^b(G)$ such that $fa\in A(G)$ for
each $a\in A(G)$.  A multiplier $f$ induces an automatically bounded map
$A(G)\rightarrow A(G)$.  As $A(G)$ is the predual of the group von Neumann
algebra $VN(G)$, it carries a natural operator space structure, and so we
can ask when the map induced by $f$ is completely bounded.  The collection of
such $f$ is the algebra of \emph{completely bounded multipliers} of $A(G)$,
written $M_{cb}A(G)$.  A result of Gilbert (see \cite{BF}, the short proof in \cite{J},
the introduction of \cite{ch}, or the survey \cite{spronk})
shows that $f\in M_{cb}A(G)$ if and only if there is a Hilbert space $K$
and bounded continuous functions $\alpha,\beta:G\rightarrow K$ with
\[ f(s^{-1}t) = \big( \beta(t) \big| \alpha(s) \big) \qquad (s,t\in G), \]
where $(\cdot|\cdot)$ denotes the inner-product on $K$.  (This formula has
$s^{-1}t$ instead of $t^{-1}s$ as considered by Jolissaint in \cite{J};
see Section~\ref{multdual} below for an explanation).

In this paper, we shall propose variations of this result for the convolution
algebra $L^1(\G)$ of a locally compact quantum group $\G$ (see below for definitions).
Clearly the space of continuous functions $G\rightarrow K$ will be important,
and we start with a short discussion of this.  Indeed, consider the
C$^*$-algebra $A=C_0(G)$.  Let $A\otimes K$ be the standard Hilbert C$^*$-module
(see \cite{lance}) which in this case can be identified with $C_0(G,K)$.  Then
the ``multiplier space'' of $A\otimes K$ is identified with $C^b(G,K)$; abstractly,
this is the space $\mc L(A,A\otimes K)$ of adjointable maps from $A$ to $A\otimes K$.
To induce a member of $M_{cb}A(G)$, we need that the pair $(\alpha,\beta)$ is
``invariant'' in the sense that $(\beta(t^{-1})|\alpha(t^{-1}s^{-1})) = f(s)$
for all $s,t\in G$.

In the quantum setting, we replace $C_0(G)$ be a possibly non-commutative C$^*$-algebra,
denoted $C_0(\G)$.  The dual quantum group to $C_0(G)$ is $C^*_r(G)$, and the Fourier
algebra is the predual of the $VN(G)=C^*_r(G)''$.  Thus, by analogy,
we will study completely bounded multipliers of the convolution algebra of the
dual quantum group, denoted $L^1(\hat\G)$.  Indeed, we work firstly by
looking at completely bounded \emph{left} multipliers of $L^1(\hat\G)$.  We restrict
attention to those multipliers which are ``represented'' by some $x\in C^b(\G)$ (so that
under the regular representation $\hat\lambda:L^1(\hat\G) \rightarrow C_0(\G)$, left
multiplication by $x$ induces our left multiplier).  This is automatic for the left part
of two-sided multipliers, see \cite[Section~8.2]{dawsm}.
In this setting, we get a complete analogy of Gilbert's result.  To study right
multipliers, we can either use the unitary antipode, or study the opposite algebra
$L^1(\hat\G)^\op$.  These turn out \emph{not} to be totally equivalent,
and the study of $L^1(\hat\G)^\op$ leads us to study how the (unbounded, in general)
antipode of $\G$ acts on the space of multipliers.  A corollary is that two-sided
multipliers are invariant under the action of the antipode.  Furthermore, this now
puts us in a position to use the representation result of Junge, Neufang and Ruan
proved in \cite{jnr}, which implies that in fact every completely bounded left
multiplier is represented in our sense.  By taking a different
perspective on the space $\mc L(A,A\otimes K)$, we are lead to consider ideas very
close to those studied by Vaes and Van Daele in \cite{vvd}.

In the final part of the paper, we look at the universal quantum group (in the
sense of Kustermans, \cite{kus1}) of $\hat\G$.  This always induces completely
bounded multipliers of $L^1(\hat\G)$, and we show how this fits into our framework.
Motivated by this construction, we end by giving one, reasonably symmetric, way to
deal with two-sided multipliers.

We follow \cite{lance} for the theory of Hilbert C$^*$-modules.  In particular,
all our inner-products will be linear in the \emph{second variable}, and we consider
\emph{right} (Hilbert C$^*$-)modules.  We similarly often let scalars act on the
right of a vector space.

\medskip

\noindent\textbf{Acknowledgements:}
We thank Martin Lindsay for suggesting the
idea of viewing $\mc L(A,A\otimes K)$ as a ``corner'' or ``slice'' of
$\mc L(A\otimes K)$; this both simplifies proofs in Section~\ref{cstarmods}
and also provides motivation for our treatment of two-sided multipliers.
While visiting Leeds on the EPSRC grant EP/I002316/1, Zhong-Jin Ruan and Matthias
Neufang pointed the author in the direction of \cite{jnr}, and Nico
Spronk suggested the comment about $\operatorname{wap}(G)$ in Section~\ref{sec:coord}.
Finally, the anonymous referee provided many helpful comments which have substantially
improved the paper.

\section{Locally compact quantum groups and multipliers}\label{sec:lcqg}

In this section, we sketch (rather briefly) the theory of locally compact quantum
groups; our main aim is to fix notation.  For details on the von Neumann algebraic
side of the theory, see \cite{kus2}, and for the C$^*$-algebraic side, see
\cite{kus} and \cite{mas}.  The survey \cite{kus5}, and Vaes's PhD thesis \cite{vaes},
are gentle, well-motivated introductions.

A \emph{locally compact quantum group} is a von Neumann algebra $M$ together with a
coproduct $\Delta:M\rightarrow M\vnten M$.  This is a unital normal $*$-homomorphism
with $(\Delta\otimes\iota)\Delta = (\iota\otimes\Delta)\Delta$.  Furthermore, we assume
the existence of left and right invariant weights on $M$.  The coproduct $\Delta$ turns
the predual $M_*$ into a completely contractive Banach algebra.

Associated to $(M,\Delta)$ is a \emph{reduced} C$^*$-algebraic quantum group $(A,\Delta)$.
Here $A$ is a C$^*$-subalgebra of $M$, and $\Delta:A\rightarrow M(A\otimes A)$, the
multiplier algebra of $A\otimes A$, the minimal C$^*$-algebra tensor product (which is
the only tensor product of C$^*$-algebras which we shall consider).  Here we
identify $M(A\otimes A)$ with a subalgebra of $M\vnten M$.  The dual space $A^*$ becomes
a completely contractive Banach algebra which contains $M_*$ as a closed ideal.

We use the left invariant weight to build a Hilbert space $H$; then $M$ is in standard
position on $H$.  There is a privileged unitary operator $W$ on $H\otimes H$
(the Hilbert space tensor product of $H$ with itself) with
$\Delta(x) = W^*(1\otimes x)W$ for $x\in M$.
Then $W$ is a \emph{multiplicative unitary}, and $W\in M(A \otimes \mc B_0(H))$,
where $\mc B_0(H)$ is the algebra of compact operators on $H$.
Define $\lambda:M_*\rightarrow\mc B(H)$
by $\lambda(\omega) = (\omega\otimes\iota)(W)$.  Let the closure of $\lambda(M_*)$ be
$\hat A$, which is a C$^*$-algebra.  Let $\hat M$ be the $\sigma$-weak closure,
which is a von Neumann algebra.  We may define a coproduct $\hat\Delta$ on $\hat M$
by $\hat\Delta(x) = \hat W^*(1\otimes x)\hat W$, where $\hat W = \sigma W^*\sigma$,
where $\sigma$ is the flip map on $H\otimes H$.  It is possible to construct left
and right invariant weights on $\hat M$, turning this into a locally compact quantum
group, whose $C^*$-algebraic counterpart is $\hat A$.  We have the biduality theorem,
that $\hat{\hat M} = M$ canonically.

As is becoming common, we write $\mathbb G$ for an abstract object, to be thought of
as a locally compact quantum group, and
we write $L^1(\G), L^\infty(\G), C_0(\G), C^b(\G)$ and $M(\G)$ for, respectively,
$M_*, M, A, M(A)$ and $A^*$.  We shall then write $\hat\G$ for the abstract object
corresponding to the dual quantum group, so that $\hat M$ is denoted by $L^\infty(\hat\G)$,
and so forth.  We shall use the hat notation to signify that an object should be thought
of as corresponding to $\hat\G$.  For example, for $\xi,\eta\in L^2(\G)$, we have the
vector functional $\omega_{\xi,\eta}:\mc B(L^2(\G)) \rightarrow \mathbb C; x \mapsto
(\xi|x\eta)$, and then the restriction of this to $L^\infty(\hat\G)$ is denoted by
$\hat\omega_{\xi,\eta}\in L^1(\hat\G)$.

Locally compact quantum groups generalise Kac algebras (see \cite{kac} and
\cite[Page~7]{vv}).  However, unlike for a Kac algebra, $L^1(\G)$ need not be $*$-algebra,
as the antipode $S$ is in general unbounded.  However, $L^1(\G)$ contains a
dense $*$-subalgebra $L^1_\sharp(\G)$.  This is the space of functionals $\omega\in L^1(\G)$
such that there exists $\sigma\in L^1(\G)$ with $\ip{x}{\sigma} = \ip{S(x)}{\omega^*}$
for $x\in D(S)$, the domain of $S$.  Here $\omega^*$ is the functional given by
$\ip{y}{\omega^*} = \overline{\ip{y^*}{\omega}}$ for $y\in L^\infty(\G)$.  We write
$\sigma = \omega^\sharp$ in this case, and then $\lambda(\omega^\sharp)
= \lambda(\omega)^*$.  See \cite[Section~3]{kus1} or \cite[Section~2]{kus2} for further
details.

As we are working with right multipliers, to avoid a notational clash,
we shall write $\kappa$ (and not $R$) for the
unitary antipode on $L^\infty(\G)$.  This is a normal anti-$*$-homomorphism with
$(\kappa\otimes\kappa)\sigma\Delta = \Delta\kappa$.  Thus the pre-adjoint $\kappa_*$
is an anti-homomorphism of $L^1(\G)$.  Furthermore, $\kappa$ is spatially implemented,
as $\kappa(x) = \hat J x^* \hat J$ for $x\in L^\infty(\G)$, where $\hat J$ is the
modular conjugation for (the left weight of) $\hat\G$.  The unitary antipodes
interact well with duality, in that $\kappa \hat\lambda = \hat\lambda \hat\kappa_*$.

There is a one-parameter group of automorphisms $(\tau_t)$ of $C_0(\G)$ which
links $S$ and $\kappa$, by $S = R \tau_{-i/2}$.  Then $R$ commutes with $(\tau_t)$,
so also $S = \tau_{-i/2} R$, and we see that $D(S) = D(\tau_{-i/2})$.
The group $(\tau_t)$ extends to a group of
automorphisms, continuous for the $\sigma$-strong$^*$ topology, of $L^\infty(\G)$.

As we are looking at the left regular representation, it is natural that things work
best for us when looking at left multipliers.  We shall later deal with right
multipliers: these can be converted to left multipliers
by looking at the \emph{opposite algebra}.
At the quantum group level, we define $\hat\G^\op$ to be the \emph{opposite} quantum group
to $\hat\G$, see \cite[Section~4]{kus2}.
That is, $L^\infty(\hat\G^\op) = L^\infty(\hat\G)$, but the multiplication in
$L^1(\hat\G^\op)$ is reversed from that in $L^1(\hat\G)$.  This is equivalent to
defining the comultiplication on $L^\infty(\hat\G^\op)$ to be $\sigma\hat\Delta$.

Then we have that $L^\infty((\hat\G^\op)\hat{}) = L^\infty(\G)'$, the commutant
of $L^\infty(\G)$ in $\mc B(L^2(\G))$.  Let the resulting locally compact quantum
group be denoted by $\G'$.  The natural coproduct $\Delta'$ is defined as follows,
where $J$ is the modular conjugation on $L^\infty(\G)$,
\[ \Delta'(x) = (J\otimes J) \Delta(JxJ) (J\otimes J) \qquad
(x\in L^\infty(\G') = L^\infty(\G)'), \]
The associated multiplicative unitary is $W' = (J\otimes J)W(J\otimes J)$.
Then $C_0(\G')$ is the norm closure of
$\{ (\iota\otimes\omega)(W') : \omega\in\mc B(L^2(\G))_* \}$, which is easily seen
to be $JC_0(\G)J$.  Consider the unitary map $\hat JJ$, and for $x\in C_0(\G')$
define $\Phi(x) = \hat J JxJ \hat J = \kappa(JxJ)^* \in C_0(\G)$, so that $\Phi$ is a
C$^*$-isomorphism of $C_0(\G')$ to $C_0(\G)$.  We then get the
\emph{right regular representation} $\hat\rho:L^1(\hat\G)
\rightarrow C_0(\G'); \hat\omega\mapsto \Phi(\hat\lambda(\hat\omega))$.

\subsection{Multipliers and duality}\label{multdual}

For a Banach algebra $\mc A$, a \emph{(two-sided) multiplier} (also called a
\emph{(double) centraliser}) is a pair of maps $L,R:\mc A\rightarrow\mc A$ such that
$aL(b) = R(a)b$.  We write $(L,R)\in M(\mc A)$, and then
$M(\mc A)$ becomes an algebra for the product $(L,R)(L',R') = (LL',R'R)$.
We shall always suppose that $\mc A$ is \emph{faithful}, that is,
if $bac=0$ for all $b,c\in\mc A$, then $a=0$.  In this case, we can show
that $L(ab)=L(a)b$ and $R(ab)=aR(b)$.  A closed graph argument will show
that $L$ and $R$ are automatically bounded.  There is a natural map
(injective, as $\mc A$ is faithful) of $\mc A$ into $M(\mc A)$ given by
$a\mapsto (L_a,R_a)$ where $L_a(b) = ab$ and $R_a(b) = ba$ for $b\in\mc A$.
For further details, see \cite{dales}, \cite[Section~1.2]{palmer} or \cite{dawsm}.

When $\mc A$ is a completely contractive Banach algebra, we can restrict attention
to those $(L,R)\in M(\mc A)$ such that $L$ and $R$ are completely bounded.  We write
$M_{cb}(\mc A)$ for the algebra of \emph{completely bounded multipliers}.  If $\mc A$
has a bounded approximate identity, then $M(\mc A)=M_{cb}(\mc A)$ with equivalent
norms, see \cite[Proposition~3.1]{KR} or \cite[Theorem~6.2]{dawsm}. Otherwise, there
appears to be no general relationship between $M(\mc A)$ and $M_{cb}(\mc A)$.

We shall also work with \emph{left multipliers}, that is, bounded maps
$L:\mc A\rightarrow\mc A$ with $L(ab)=L(a)b$ for $a,b\in\mc A$.  We write
$L\in M^l(\mc A)$.  Similarly, we define the \emph{right multipliers} $M^r(\mc A)$,
and the analogous completely bounded versions, $M^l_{cb}(\mc A)$ and $M^r_{cb}(\mc A)$.

\begin{definition}
Let $\G$ be a locally compact quantum group.  A multiplier $L\in M^l(L^1(\hat\G))$
is \emph{represented} if there exists $a\in C^b(\G)$ such that $\hat\lambda(L(\hat\omega))
= a \hat\lambda(\hat\omega)$ for each $\hat\omega\in L^1(\hat\G)$.  Similarly,
$R\in M^r(L^1(\hat\G))$ is \emph{represented} if there exists $a\in C^b(\G)$ such that $\hat\lambda(R(\hat\omega)) = \hat\lambda(\hat\omega)a$ for each $\hat\omega\in L^1(\hat\G)$.
\end{definition}

Building on work of Kraus and Ruan in \cite{KR}, we showed in \cite[Theorem~8.9]{dawsm}
that a two-sided multiplier $(L,R)\in M_{cb}(L^1(\hat\G))$ is represented by some
$a\in C^b(\G)$; that is, $a\hat\lambda(\omega) = \hat\lambda(L(\omega))$ and
$\hat\lambda(\omega)a = \hat\lambda(R(\omega))$ for each $\hat\omega\in L^1(\hat\G)$
(compare with Proposition~\ref{prop:four} below).
The resulting map $\hat\Lambda:M_{cb}(L^1(\hat\G))\rightarrow C^b(\G)$ is
a completely contractive algebra homomorphism.
We remark that we don't know if $\hat\Lambda$ can be extended (even just as an
algebra homomorphism) to $M(L^1(\hat\G))$.

To illustrate this, let
$G$ be a locally compact group, and form the commutative quantum group $L^\infty(G)$.
Here the coproduct is given by $\Delta(F)(s,t) = F(st)$ for $F\in L^\infty(G)$ and
$s,t\in G$.  The left and right invariant weights are given by integrating against the
left and right Haar measures, respectively.  Then the dual quantum group is $VN(G)$,
which has predual $A(G)$, the Fourier algebra.  The associated Hilbert space is simply
$L^2(G)$, and as $VN(G)$ is in standard position, every normal functional $\omega\in A(G)$
is of the form $\omega_{\xi,\eta}$, where $\ip{x}{\omega_{\xi,\eta}} = (\xi|x(\eta))$
for $x\in VN(G)$ and $\xi,\eta\in L^2(G)$.  The multiplicative unitary is given
by $W\xi(s,t) = \xi(s,s^{-1}t)$ for $\xi\in L^2(G\times G), s,t\in G$.  Let
$\lambda:G\rightarrow\mc B(L^2(G))$ be the left regular representation, where
\[ \lambda(s):\xi\mapsto \eta, \quad \eta(t) = \xi(s^{-1}t)
\qquad (\xi\in L^2(G), s,t\in G). \]
This does integrate to give the expected map $\lambda:L^1(G)\rightarrow\mc B(L^2(G))$.
Then $\hat\lambda:A(G) \rightarrow C_0(G)$, and we can check that
\[ \hat\lambda(\omega)(s) = \ip{\lambda(s^{-1})}{\omega} \qquad (s\in G, \omega\in A(G)). \]
Thus $\hat\lambda$ gives the map considered by Takesaki in
\cite[Chapter~VII, Section~3]{tak2}, and \emph{not} the map considered by Eymard in
\cite{eymard} (where $s^{-1}$ is replaced by $s$).  This also explains why our formulas
in the introduction were different to those considered Jolissaint in \cite{J}, as the
embedding $\hat\Lambda:M_{cb}A(G) \rightarrow C^b(G)$ is consequently also different to
that usually considered.

A representation result for completely bounded multipliers was shown by Junge, Neufang
and Ruan in \cite{jnr}.  The principle result of that paper is \cite[Theorem~4.5]{jnr},
which shows a completely isometric identification between $M_{cb}^r(L^1(\hat\G))$ and
$\mc{CB}^{\sigma,L^\infty(\hat\G)}_{L^\infty(\G)}(\mc B(L^2(\G)))$.  This latter space is
the algebra of weak$^*$-continuous, completely bounded maps $\mc B(L^2(\G)) \rightarrow
\mc B(L^2(\G))$ which are $L^\infty(\G)$-bimodule maps, and which map $L^\infty(\hat\G)$
into itself.  This space can be also be studied by using the (extended) Haagerup tensor
product, and it is possible to view our constructions using a viewpoint similar to
the Haagerup tensor product-- this is explored in Section~\ref{sec:coord} below;
in some sense, our results are C$^*$-algebraic counterparts to the von Neumann algebra
approach of \cite{jnr}.  Indeed, \cite{jnr} was preceded by work of Neufang, Ruan and
Spronk in \cite{nrs} where the $L^1(G)$ and $A(G)$ cases are worked out.
Links with the Haagerup tensor product, and Gilbert's theorem, are explicitly used
in \cite[Section~4]{nrs}.

For us, the importance of \cite{jnr} is the following result (recall the discussion in
the previous section about the right regular representation $\hat\rho$).

\begin{theorem}[{\cite[Corollary~4.4]{jnr}}]\label{thm:ten}
Let $R\in M^r_{cb}(L^1(\hat\G))$.  There exists $x\in L^\infty(\G')$ such that
$\hat\rho(\hat\omega)x = \hat\rho(R(\hat\omega))$ for all $\hat\omega\in L^1(\hat\G)$.
\end{theorem}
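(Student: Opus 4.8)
The plan is to invoke the completely isometric identification of Junge, Neufang and Ruan recalled above, and then to extract from the resulting bimodule map a single implementing operator, which the canonical isomorphism relating $C_0(\G)$ and $C_0(\G')$ will place in $L^\infty(\G')$. Concretely, given $R\in M^r_{cb}(L^1(\hat\G))$, that identification produces a weak$^*$-continuous completely bounded $L^\infty(\G)$-bimodule map $\Theta\colon\mc B(L^2(\G))\to\mc B(L^2(\G))$ which carries $L^\infty(\hat\G)$ into itself and whose restriction to $L^\infty(\hat\G)$ is the (normal) adjoint $R^*$ of $R$. The first step is to record the module structure at the level of $L^\infty(\hat\G)$: dualising the right-multiplier identity $R(\hat\omega\hat\mu)=\hat\omega R(\hat\mu)$ against the coproduct shows that $R^*$ is covariant, in the sense that $\hat\Delta R^* = (\iota\otimes R^*)\hat\Delta$ on $L^\infty(\hat\G)$.

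The second, and central, step is to produce a single implementing operator. Writing $W\in L^\infty(\G)\vnten L^\infty(\hat\G)$ for the multiplicative unitary, so that $\hat\lambda(\hat\omega)=(\iota\otimes\hat\omega)(W)$ recovers the left regular representation by slicing the $L^\infty(\hat\G)$-leg, I would use the extension $\Theta$ to make sense of the normal map $\iota\otimes\Theta$ on $\mc B(L^2(\G))\vnten\mc B(L^2(\G))$; since the second leg of $W$ lies in $L^\infty(\hat\G)$ we have $(\iota\otimes\Theta)(W)=(\iota\otimes R^*)(W)$, and normality gives $\hat\lambda(R(\hat\omega)) = (\iota\otimes\hat\omega)\big((\iota\otimes\Theta)(W)\big)$. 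It therefore suffices to prove the operator identity $(\iota\otimes R^*)(W) = W(a\otimes 1)$ for some $a\in\mc B(L^2(\G))$, for then slicing the $L^\infty(\hat\G)$-leg yields $\hat\lambda(R(\hat\omega)) = \hat\lambda(\hat\omega)\,a$; and since the left-hand side lies in $C_0(\G)$ for every $\hat\omega$, a bounded-approximate-identity argument forces $a\in M(C_0(\G))\subseteq L^\infty(\G)$. The identity $(\iota\otimes R^*)(W)=W(a\otimes1)$ is equivalent to the statement that $Y:=W^*(\iota\otimes R^*)(W)$ has trivial second leg, and I would extract this from the covariance relation together with the implementation $\hat\Delta(\cdot)=\hat W^*(1\otimes\cdot)\hat W$ and the density of the slices of $W$.

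Finally, I would transport the result to the right regular representation. Since $\hat\rho$ is obtained from $\hat\lambda$ by the $*$-isomorphism $\Phi^{-1}\colon C_0(\G)\to C_0(\G')$, $y\mapsto J\hat J y\hat J J$ (conjugation by the unitary $J\hat J$), which is multiplicative and extends to $L^\infty(\G)\to L^\infty(\G')$, applying $\Phi^{-1}$ to $\hat\lambda(R(\hat\omega))=\hat\lambda(\hat\omega)\,a$ gives $\hat\rho(R(\hat\omega)) = \hat\rho(\hat\omega)\,x$ with $x:=\Phi^{-1}(a)\in L^\infty(\G)'=L^\infty(\G')$, as required. The main obstacle is the operator identity of the second step: this is precisely the point at which a \emph{single} operator must be produced from the covariant map $R^*$, and it is here that complete boundedness—supplied by the assumed identification of Junge, Neufang and Ruan—is indispensable, merely bounded right multipliers being insufficient. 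Establishing that $Y$ has scalar second leg, and confirming that the resulting $a$ lies in $L^\infty(\G)$ rather than merely in $\mc B(L^2(\G))$, form the technical heart of the argument.
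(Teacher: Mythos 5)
You should first be clear what you are competing against: the paper offers no proof of this statement at all. It is quoted verbatim from \cite[Corollary~4.4]{jnr}, and the only indication of an argument is the later remark (Section~\ref{sec:coord}) that JNR proceed in two steps: extend the adjoint of the right multiplier to a normal completely bounded $L^\infty(\G)$-bimodule map on $\mc B(L^2(\G))$ (\cite[Proposition~4.3]{jnr}), then extract an implementing operator via a weak$^*$-Haagerup tensor product argument (\cite[Proposition~3.2]{jnr}). Your plan---take the identification of \cite[Theorem~4.5]{jnr} as a black box and derive the corollary from it---is legitimate (it uses a stronger input than the corollary itself, which the paper explicitly notes is not needed), and your final transport step is exactly right: conjugating $\hat\lambda(R(\hat\omega))=\hat\lambda(\hat\omega)a$ by the unitary $J\hat J$ is precisely the paper's definition of $\hat\rho$ unwound, and it places $x=J\hat Ja\hat JJ$ in $JL^\infty(\G)J=L^\infty(\G)'=L^\infty(\G')$.

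The gap is the step you yourself flag as the ``technical heart'': showing that $Y$ has trivial second leg is exactly the mathematical content that Corollary~4.4 adds to Theorem~4.5, and your proposal does not supply it. Moreover, the ingredients you name cannot suffice as stated: since $\hat\Delta(\cdot)=\hat W^*(1\otimes\cdot)\hat W$ equals $\sigma W(\cdot\otimes1)W^*\sigma$ with $W\in L^\infty(\G)\vnten\mc B(L^2(\G))$, the covariance identity $\hat\Delta\Theta=(\iota\otimes\Theta)\hat\Delta$ holds \emph{automatically} for every normal completely bounded $L^\infty(\G)$-bimodule map $\Theta$, simply by pulling $W$ through the amplification $\Theta\otimes\iota$; covariance is a formal consequence of the bimodule property and cannot by itself single out one operator $a$. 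What is actually needed is the corepresentation (pentagon) identity for $W$ together with structural facts such as ergodicity of the coproduct (solutions of $\hat\Delta(z)=1\otimes z$ in $L^\infty(\hat\G)$ are scalar, the fact invoked from \cite[Result~5.13]{kus} in the proof of Theorem~\ref{thm:four}) and careful leg-tracking, or else JNR's own decomposition $\Theta=\sum_i a_i(\cdot)b_i$ with $a_i,b_i\in L^\infty(\G)'$. Two smaller slips: in this paper's conventions $\hat\lambda(\hat\omega)=(\iota\otimes\hat\omega)(W^*)$, not $(\iota\otimes\hat\omega)(W)$---the latter is an \emph{anti}-homomorphism, so your key identity as written would represent a multiplier of the opposite algebra; and your bounded-approximate-identity argument only yields $ba\in C_0(\G)$ for $b\in C_0(\G)$, which does not give $a\in M(C_0(\G))$, though $a\in L^\infty(\G)$ (all the theorem needs) follows from leg-tracking or from the commutant argument in Proposition~\ref{prop:one}.
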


Actually, the full power of the representation result of \cite{jnr} is not needed
to show this-- see Section~\ref{sec:coord} below where
a very brief sketch of the proof is given.  However, undoubtedly the proof of this
result is more ingenuous than the two-sided multiplier case.

Using the unitary antipode, it's easy to transfer this result to completely
bounded left multipliers.  Notice that we only
get $x\in L^\infty(\G')$, not $C^b(\G')$, which is slightly weaker than the requirement
for $R$ to be \emph{represented} in our sense.  However, we are able to boot-strap this
result and show that actually $x$ is in $C^b(\G')$ (see
Theorem~\ref{thm:three} and Proposition~\ref{prop:ten} below).

The following results extract a little bit more information than we found in
\cite{dawsm}; they are also similar to, for example, \cite[Theorem~4.10]{jnr}.

\begin{proposition}\label{prop:one}
Let $R$ be a normal completely bounded map $L^\infty(\hat\G)\rightarrow\mc B(L^2(\G))$,
and let $a\in\mc B(L^2(\G))$ be such that $(R\otimes\iota)(\hat W) = \hat W(1\otimes a)$.
Then $R$ maps into $L^\infty(\hat\G)$, and the pre-adjoint $R_*$ is a right multiplier
of $L^1(\hat\G)$ with $\hat\lambda(R_*(\hat\omega)) = \hat\lambda(\hat\omega)a$ for
$\hat\omega\in L^1(\G)$.  Furthermore, $a\in L^\infty(\G)$.
\end{proposition}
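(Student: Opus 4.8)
The plan is to read off everything from the single operator identity $(R\otimes\iota)(\hat W) = \hat W(1\otimes a)$ by slicing its two legs separately. Throughout I use that $\hat W\in L^\infty(\hat\G)\vnten L^\infty(\G)$, that $\hat\lambda:L^1(\hat\G)\to C_0(\G)$, $\hat\lambda(\hat\omega) = (\hat\omega\otimes\iota)(\hat W)$, is an injective algebra homomorphism, and that the first-leg slices $\{(\iota\otimes\mu)(\hat W):\mu\in\mc B(L^2(\G))_*\}$ span a $\sigma$-weakly dense subspace of $L^\infty(\hat\G)$.

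To see that $R$ maps $L^\infty(\hat\G)$ into itself, I would slice the \emph{second} leg. Since $R$ acts on the first leg it commutes with a second-leg functional $\mu$, giving $R\big((\iota\otimes\mu)(\hat W)\big) = (\iota\otimes\mu)\big((R\otimes\iota)(\hat W)\big) = (\iota\otimes\mu')(\hat W)$, where $\mu'(c) = \mu(ca)$; this lies in $L^\infty(\hat\G)$. As such slices are $\sigma$-weakly dense and $R$ is normal, the $\sigma$-weakly closed subspace $\{x : R(x)\in L^\infty(\hat\G)\}$ exhausts $L^\infty(\hat\G)$, so $R$ corestricts to a normal map of $L^\infty(\hat\G)$ and the pre-adjoint $R_*$ is defined.

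For the multiplier property I would slice the \emph{first} leg. From the definition of the pre-adjoint one gets $(R_*(\hat\omega)\otimes\iota)(\hat W) = (\hat\omega\otimes\iota)\big((R\otimes\iota)(\hat W)\big)$; inserting the hypothesis and pulling $a$ out of the remaining (second-leg) slice yields the key formula $\hat\lambda(R_*(\hat\omega)) = \hat\lambda(\hat\omega)a$ for $\hat\omega\in L^1(\hat\G)$. Applying this to a convolution product and using that $\hat\lambda$ is a homomorphism gives $\hat\lambda(R_*(\hat\omega * \hat\nu)) = \hat\lambda(\hat\omega)\hat\lambda(\hat\nu)a = \hat\lambda(\hat\omega * R_*(\hat\nu))$; injectivity of $\hat\lambda$ then forces $R_*(\hat\omega*\hat\nu) = \hat\omega * R_*(\hat\nu)$, i.e. $R_*\in M^r(L^1(\hat\G))$.

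It remains to place $a$ in $L^\infty(\G)$, which I expect to be the only step needing a genuine idea rather than bookkeeping. The key formula shows $\hat\lambda(\hat\omega)a = \hat\lambda(R_*(\hat\omega))\in C_0(\G)$ for every $\hat\omega$; since the $\hat\lambda(\hat\omega)$ have norm-dense linear span in $C_0(\G)$, this upgrades to $C_0(\G)a\subseteq C_0(\G)$. Choosing a bounded approximate identity $(e_i)$ for $C_0(\G)$ we have $e_ia\in C_0(\G)\subseteq L^\infty(\G)$ while $e_ia\to a$ $\sigma$-strongly (as $e_i\to 1$ $\sigma$-strongly), so $a\in L^\infty(\G)$ because $L^\infty(\G)$ is $\sigma$-strongly closed. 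The only points demanding care are the density-plus-normality argument of the first step and this final passage from the C$^*$-algebra $C_0(\G)$ to its weak closure; the rest is routine slicing.
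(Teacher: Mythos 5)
Your proposal is correct, but it is organised genuinely differently from the paper's proof, in two of the three steps. For the claim that $R$ maps into $L^\infty(\hat\G)$, the paper never argues on the algebra side: it computes with vector functionals $\omega_{\xi_0,\eta_0}\in T(L^2(\G))$ to obtain $\hat\lambda(q(\omega))a = \hat\lambda(R_*(\omega))$, where $q:T(L^2(\G))\rightarrow L^1(\hat\G)$ is the quotient map and $R_*$ is the pre-adjoint of the given map $R$, and then shows that $R_*$ factors as $rq$ for a map $r$ on $L^1(\hat\G)$ defined via injectivity of $\hat\lambda$; this factorisation through $q$ is exactly the predual formulation of ``$R$ maps into $L^\infty(\hat\G)$'', and it delivers the multiplier formula simultaneously. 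Your route instead slices the second leg of $\hat W$ and rests on two facts the paper does not need: the slice-map identity $R\big((\iota\otimes\mu)(X)\big) = (\iota\otimes\mu)\big((R\otimes\iota)(X)\big)$ for normal completely bounded maps, and the $\sigma$-weak density in $L^\infty(\hat\G)$ of the span of $\{(\iota\otimes\mu)(\hat W)\}$ --- the latter is true (since $\hat W=\sigma W^*\sigma$, these slices are exactly $\lambda(L^1(\G))^*$, whose span is norm-dense in the self-adjoint algebra $C_0(\hat\G)$), but it is the one assertion you use without justification and should be checked against the definitions. For the final claim $a\in L^\infty(\G)$ the two arguments diverge completely: the paper shows $ax=xa$ for every $x\in L^\infty(\G)'$ (using that $\hat\lambda(\hat\omega)$ and $\hat\lambda(R_*(\hat\omega))$ lie in $C_0(\G)$, hence commute with $x$, together with non-degeneracy) and then invokes the bicommutant theorem, whereas you show $C_0(\G)a\subseteq C_0(\G)$ and take a strong limit of $e_ia$ along a bounded approximate identity, using that $L^\infty(\G)$ is strongly closed. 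Both are sound; yours trades von Neumann's theorem for approximate-identity bookkeeping, while the paper's is a one-line commutation trick once non-degeneracy is in hand. The middle step --- the formula $\hat\lambda(R_*(\hat\omega)) = \hat\lambda(\hat\omega)a$ and the right-multiplier identity via multiplicativity and injectivity of $\hat\lambda$ --- is essentially identical in both treatments.
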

\begin{proof}
Let $T(L^2(\G))$ be the trace-class operators on $L^2(\G)$, and let
$q:T(L^2(\G))\rightarrow L^1(\hat\G)$ be the natural quotient map, which is
actually a complete quotient map, see \cite[Section~4.2]{ER}.  Let $\xi_0,\eta_0,\xi,
\eta\in L^2(\G)$, so that
\begin{align*} \big( \xi \big| \hat\lambda(\omega_{\xi_0,\eta_0}) a \eta \big)
&= \big( \xi \big| (\omega_{\xi_0,\eta_0}\otimes\iota)(\hat W) a \eta \big)
= \big( \xi_0\otimes\xi \big| \hat W(\eta_0 \otimes a\eta) \big) \\
&= \big( \xi_0\otimes\xi \big| (R\otimes\iota)(\hat W)(\eta_0\otimes\eta) \big)
= \ip{\hat W}{R_*(\omega_{\xi_0,\eta_0}) \otimes \omega_{\xi,\eta}}
= \big( \xi \big| \hat\lambda(R_*(\omega_{\xi_0,\eta_0})) \eta \big)
\end{align*}
Thus $\hat\lambda(q(\omega))a = \hat\lambda(R_*(\omega))$ for $\omega\in T(L^2(\G))$.

In particular, $\hat\lambda(\hat\omega)a \in \hat\lambda(L^1(\hat\G))$ for each
$\hat\omega\in L^1(\hat\G)$.  As $\hat\lambda$ is injective, there exists some function
$r:L^1(\hat\G)\rightarrow L^1(\hat\G)$ with $\hat\lambda(\hat\omega)a = 
\hat\lambda(r(\hat\omega))$ for $\hat\omega\in L^1(\hat\G)$.  Using again
that $\hat\lambda$ is an injective homomorphism, it is easy to check that $r$ is linear
and a right multiplier (but maybe not bounded).  However, we then see that
\[ \hat\lambda\big( q(\omega) \big) a = \hat\lambda\big( r(q(\omega)) \big)
= \hat\lambda\big(R_*(\omega)\big) \qquad (\omega\in T(L^2(\G))). \]
So $R_* = rq$ and hence $R_*$ drops to a completely bounded map
$L^1(\hat\G)\rightarrow L^1(\hat\G)$, and then $r=R_*$, as required.

Finally, let $x\in L^\infty(\G)'$, so for $\hat\omega\in L^1(\hat\G)$,
we have that $\hat\lambda(\hat\omega) ax = \hat\lambda(R_*(\hat\omega)) x
= x \hat\lambda(R_*(\hat\omega)) = x \hat\lambda(\hat\omega) a
= \hat\lambda(\hat\omega) xa$.  This is enough to imply that
$ax=xa$ (compare with the proof of \cite[Proposition~8.8]{dawsm}) and so
$a\in L^\infty(\G)'' = L^\infty(\G)$.
\end{proof}

It is easily checked that, similarly, when $L\in\mc{CB}(L^\infty(\hat\G),\mc B(L^2(\G)))$
is normal with there existing $a\in\mc B(L^2(\G))$ with $(L\otimes\iota)(\hat W)
= (1\otimes a)\hat W$, then $L\in\mc{CB}(L^\infty(\hat\G))$, and the pre-adjoint
$L_*$ is a left multiplier of $L^1(\G)$ with $\hat\lambda(L_*(\hat\omega))
= a\hat\lambda(\hat\omega)$ for $\hat\omega\in L^1(\G)$.

Similarly, if $(L,R)$ is a pair of maps, both associated to the same
$a\in\mc B(L^2(\G))$, then the pre-adjoints form a multiplier
$(L_*,R_*) \in M_{cb}(L^1(\hat\G))$.  As $\hat\lambda(L^1(\hat\G))$ is
dense in $C_0(\G)$, it follows that automatically $a\in C^b(\G)$.
We shall later prove that this is true for one-sided multipliers as well,
see Theorem~\ref{thm:three} and Proposition~\ref{prop:ten} below.

\begin{proposition}\label{prop:four}
Let $L_*\in M^l_{cb}(L^1(\hat\G))$ and $a\in L^\infty(\G)$ be such that
$a \hat\lambda(\hat\omega) = \hat\lambda(L_*(\hat\omega))$ for $\hat\omega\in L^1(\hat\G)$.
Setting $L=L_*^*$, we have that $(L\otimes\iota)(\hat W) = (1\otimes a)\hat W$.
Similarly, if $R$ is the adjoint of a completely bounded right
multiplier associated to $a$, then $(R\otimes\iota)(\hat W) = \hat W(1\otimes a)$.
If $L_*$ and $R_*$ are both associated to the same $a\in L^\infty(\G)$, then
$(L_*,R_*)\in M_{cb}(L^1(\hat\G))$ and $a\in C^b(\G)$.
\end{proposition}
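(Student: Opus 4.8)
The plan is to obtain all three assertions from the defining relation $a\hat\lambda(\hat\omega)=\hat\lambda(L_*(\hat\omega))$ by slicing the multiplicative unitary and moving the normal maps across it, thereby reversing the computation of Proposition~\ref{prop:one}.

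First I would treat the statement about $L$. Writing $\hat\lambda(\hat\omega)=(\hat\omega\otimes\iota)(\hat W)$ and recalling that $\hat W\in L^\infty(\hat\G)\vnten L^\infty(\G)$, so that $a\in L^\infty(\G)$ acts on the second leg, the hypothesis becomes
\[ (\hat\omega\otimes\iota)\big((1\otimes a)\hat W\big)=(L_*(\hat\omega)\otimes\iota)(\hat W) \qquad (\hat\omega\in L^1(\hat\G)). \]
Now I would fix a normal functional $\mu$ on $\mc B(L^2(\G))$ and pair both sides against it on the remaining leg, reaching an identity $\ip{(1\otimes a)\hat W}{\hat\omega\otimes\mu}=\ip{\hat W}{L_*(\hat\omega)\otimes\mu}$. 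Setting $y_\mu=(\iota\otimes\mu)(\hat W)\in L^\infty(\hat\G)$, the right-hand side equals $\ip{y_\mu}{L_*(\hat\omega)}=\ip{L(y_\mu)}{\hat\omega}$ since $L=L_*^*$. As $L$ and the slice $\iota\otimes\mu$ act on different legs and are both normal, they commute, so $L(y_\mu)=(\iota\otimes\mu)\big((L\otimes\iota)(\hat W)\big)$, whence the right-hand side is $\ip{(L\otimes\iota)(\hat W)}{\hat\omega\otimes\mu}$. Letting $\hat\omega$ range over $L^1(\hat\G)$ and $\mu$ over the normal functionals, the $\hat\omega\otimes\mu$ separate the points of $L^\infty(\hat\G)\vnten L^\infty(\G)$, which contains both $(1\otimes a)\hat W$ and $(L\otimes\iota)(\hat W)$; this yields $(L\otimes\iota)(\hat W)=(1\otimes a)\hat W$.

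The right-multiplier statement is entirely parallel: starting from $\hat\lambda(\hat\omega)a=\hat\lambda(R_*(\hat\omega))$, the factor $a$ now sits to the right, so the same slicing and transport of $R=R_*^*$ onto $\hat W$ produces $(R\otimes\iota)(\hat W)=\hat W(1\otimes a)$. For the final assertion I would first check that $(L_*,R_*)$ is a genuine two-sided multiplier: applying the injective homomorphism $\hat\lambda$ and using both representing relations, one finds that $\hat\lambda(\hat\omega_1 L_*(\hat\omega_2))$ and $\hat\lambda(R_*(\hat\omega_1)\hat\omega_2)$ both equal $\hat\lambda(\hat\omega_1)\,a\,\hat\lambda(\hat\omega_2)$, so injectivity of $\hat\lambda$ gives $\hat\omega_1 L_*(\hat\omega_2)=R_*(\hat\omega_1)\hat\omega_2$; with $L_*,R_*$ completely bounded this places $(L_*,R_*)$ in $M_{cb}(L^1(\hat\G))$. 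Finally, since $aC_0(\G)\subseteq C_0(\G)$ and $C_0(\G)a\subseteq C_0(\G)$ by the representing relations together with the density of $\hat\lambda(L^1(\hat\G))$ in $C_0(\G)$, the element $a$ is a two-sided multiplier of the C$^*$-algebra $C_0(\G)$, whence $a\in M(C_0(\G))=C^b(\G)$.

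I expect the only real subtlety to be the justification that $L$ may be transported from the functional onto the unitary, i.e. that the normal slice $\iota\otimes\mu$ commutes with $L\otimes\iota$ and that $(\iota\otimes\mu)(\hat W)$ genuinely lands in $L^\infty(\hat\G)$; everything else is bookkeeping of legs, the separation of points by the product functionals, and an appeal to the representation theorem \cite[Theorem~8.9]{dawsm} recalled above.
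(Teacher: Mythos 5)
Your proof is correct and is essentially the paper's own argument: both reverse the computation of Proposition~\ref{prop:one} by pairing the representing relation against $\hat W$ (the paper works with vector functionals $\omega_{\xi,\eta}$ where you use abstract normal slices $\iota\otimes\mu$, a purely cosmetic difference), then obtain the two-sided multiplier from injectivity of the homomorphism $\hat\lambda$, and deduce $a\in M(C_0(\G))=C^b(\G)$ from density of $\hat\lambda(L^1(\hat\G))$ in $C_0(\G)$. The slice-commutation fact you flag as the only subtlety, namely $(\iota\otimes\mu)\circ(L\otimes\iota)=L\circ(\iota\otimes\mu)$, is exactly the defining property of $L\otimes\iota$ for a normal completely bounded map (its pre-adjoint sends $\hat\omega\otimes\mu$ to $L_*(\hat\omega)\otimes\mu$), so there is no gap.
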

\begin{proof}
We simply reverse some previous calculations, where, for variety, we work with left
multipliers.  For $\xi_0,\eta_0,\xi,\eta\in L^2(\G)$, we have
\begin{align*} \big( \xi \big| a \hat\lambda(\hat\omega_{\xi_0,\eta_0}) \eta \big)
&= \big( \xi \big| \hat\lambda(L_*(\hat\omega_{\xi_0,\eta_0})) \eta \big)
= \ip{\hat W}{L_*(\hat\omega_{\xi_0,\eta_0}) \otimes \omega_{\xi,\eta}}
= \ip{(L\otimes\iota)(\hat W)}{\hat\omega_{\xi_0,\eta_0} \otimes \omega_{\xi,\eta}} \\
&= \big( a^*\xi \big| (\hat\omega_{\xi_0,\eta_0}\otimes\iota)(\hat W) \eta \big)
= \ip{\hat W}{\hat\omega_{\xi_0,\eta_0} \otimes \omega_{a^*\xi,\eta}}
= \ip{(1\otimes a)\hat W}{\hat\omega_{\xi_0,\eta_0} \otimes \omega_{\xi,\eta}}.
\end{align*}
Thus $(L\otimes\iota)(\hat W) = (1\otimes a)\hat W$.  A similar calculation holds for
right multipliers.

If $L_*$ and $R_*$ are associated to the same $a$, then for $\hat\omega,
\hat\sigma\in L^1(\G)$,
\[ \hat\lambda\big( \hat\omega L_*(\hat\sigma) \big)
= \hat\lambda(\hat\omega) a \hat\lambda(\hat\sigma)
= \hat\lambda\big( R_*(\hat\omega) \hat\sigma \big), \]
using that $\hat\lambda$ is a homomorphism.  As $\hat\lambda$ injects, it follows
that $(L_*,R_*)$ is a multiplier, which is completely bounded by assumption.
That now $a\in C^b(\G)$ follows from the comment above.
\end{proof}

\section{Hilbert C$^*$-modules}\label{cstarmods}

We shall use the basic theory of Hilbert C$^*$-modules, following \cite{lance}, for
example.  Let us develop a little of this theory.  Given a $C^*$-algebra $A$ and a
Hilbert space $K$, we let $A\odot K$ be the algebraic tensor product of $A$ with $K$,
turned into a right $A$-module in the obvious way, and given the $A$-valued inner-product
$(a\otimes\xi|b\otimes\eta) = a^*b (\xi|\eta)$.  Let $A\otimes K$ be the completion.

Let $E$ and $F$ be Hilbert C$^*$-modules over $A$.  We write $\mc K(E,F)$ for the
``compact'' operators from $E$ to $F$, the closure of the linear span of maps
$\theta_{x,y}$.  Here $x\in F, y\in E$ and we have $\theta_{x,y}(z) = x (y|z)$
for $z\in E$.  Let $\mc L(E,F)$ be the space of all adjointable operators from
$E$ to $F$.  Recall that the unit ball of $\mc K(E,F)$ is strictly dense in the
unit ball of $\mc L(E,F)$.  When $E=F$, we can identify $\mc L(E)$ with the
multiplier algebra $M(\mc K(E))$.  Indeed, $\mc K(E)$ is an essential ideal in
$\mc L(E)$, so we have an inclusion $\mc L(E) \rightarrow M(\mc K(E))$, which
is actually surjective.  When $E=F=A$, we have $\mc K(A)=A$ and $\mc L(A)$ is
identified with the multiplier algebra $M(A)$.

We identify $\mc K(A\otimes K)$ with $A \otimes \mc B_0(K)$.  The isomorphism
sends $\theta_{a\otimes\xi,b\otimes\eta}$ to $ab^* \otimes \theta_{\xi,\eta}$.
Here $\theta_{\xi,\eta}\in\mc B_0(K)$ is the finite-rank map $\phi\mapsto
\xi (\eta|\phi)$.  That this extends by continuity is a little subtle; see \cite{lance}.
Notice that if $P\in\mc B(K)$, then $\iota\otimes P\in\mc L(A\otimes K)$.

More generally, let $E$ and $F$ be Hilbert C$^*$-modules over $A$ and $B$,
respectively.  We let $E\otimes F$ be the exterior tensor product, which is
a Hilbert C$^*$-module over $A\otimes B$, with the inner-product
\[ (x\otimes y| w\otimes z) = (x|w) \otimes (y|z). \]
We then have an embedding $\mc L(E) \otimes \mc L(F) \rightarrow \mc L(E\otimes F)$,
and more generally, an embedding of $\mc L(E_1,E_2) \otimes \mc L(F_1,F_2)$ into
$\mc L(E_1\otimes F_1, E_2\otimes F_2)$.

As mentioned in the introduction, for a locally compact space $G$, we may identify
$C_0(G)\otimes K$ with $C_0(G,K)$, the continuous functions from $G$ to $K$ which
vanish at infinity.  Given $\alpha\in C^b(G,K)$, a bounded continuous function from
$G$ to $K$, we define $\mc T\in\mc L(C_0(G),C_0(G)\otimes K)$ by
\[ \mc T(a) = \big( a(s) \alpha(s) \big)_{s\in G} \qquad (a\in C_0(G)). \]
A calculation shows that $\mc T$ is indeed adjointable: if $x\in C_0(G,K)$ then
$\mc T^*(x)(s) = (\alpha(s)|x(s))$ for $s\in G$.  Conversely, it is not too hard to show
that any member of $\mc L(C_0(G),C_0(G)\otimes K)$ arises in this way.

This hence motivates the study of $\mc L(A,A\otimes K)$ for an arbitrary C$^*$-algebra
$A$.  Fix a unit vector $\xi_0\in K$, and regard $K$ as the ``row space''
$\mc L(K, \mathbb C)$, where $K$ is a module over $\mathbb C$.  So $\xi_0$ is identified
with the map $\eta\mapsto(\xi_0|\eta)$.  This is adjointable, with adjoint
$\xi_0^*:\mathbb C\rightarrow K; t\mapsto t\xi_0$.  Let $\iota:A\rightarrow A$ be the
identity, so, as above, we can form the tensor product $\iota\otimes\xi_0 \in
\mc L(A\otimes K,A\otimes\mathbb C) = \mc L(A\otimes K,A)$.  This is simply the map
$a\otimes\eta \mapsto a (\xi_0|\eta)$, and the adjoint is $(\iota\otimes\xi_0)^* =
\iota\otimes\xi_0^*:a \mapsto a\otimes\xi_0$.  It is actually not particularly hard to
show by direct calculation that these maps are contractive and are mutual adjoints.

Then we have an embedding and a quotient map, both
of which are adjointable, and hence $A$-module maps:
\begin{align*} \mc L(A,A\otimes K) &\rightarrow \mc L(A\otimes K)\cong M(A\otimes\mc B_0(K));
\quad \alpha \mapsto \alpha(\iota\otimes\xi_0), \\
\mc L(A\otimes K) &\rightarrow \mc L(A,A\otimes K); \quad
\mc T \mapsto \mc T(\iota\otimes\xi_0)^*. \end{align*}
Hence we can identify $\mc L(A,A\otimes K)$ as a complemented submodule of
$\mc L(A\otimes K)$.  This follows, as $(\iota\otimes\xi_0)(\iota\otimes\xi_0)^*$ is
the identity on $A$, and so the map $\mc T\mapsto \mc T(\iota\otimes\xi_0)^*
(\iota\otimes\xi_0)$ is a projection from $\mc L(A\otimes K)$ onto the image
of $\mc L(A,A\otimes K)$.

We shall use the notation that $\mc T\in\mc L(A\otimes K)$ is identified with
$T\in M(A\otimes\mc B_0(K))$.  Suppose that $A$ is faithfully and non-degenerately
represented on $H$.
Then we can identify $M(A\otimes\mc B_0(K))$ with a subalgebra of $\mc B(H\otimes K)$,
and we shall continue to write $T$ for the resulting operator in $\mc B(H\otimes K)$.
Similarly, we identify $M(A)$ with $\{ T\in\mc B(H) : Ta,aT\in A \ (a\in A) \}$.

It will be useful to define some auxiliary maps.  For $\xi\in H$, define
$e_\xi: A\otimes K\rightarrow H\otimes K$ by $e_\xi(a\otimes\eta) = a(\xi)\otimes\eta$,
and linearity and continuity.
This makes sense, as given $\tau=\sum_n a_n\otimes\eta_n\in A\otimes K$, we have that
\[ \|e_\xi(\tau)\|^2 = \sum_{n,m} (a_n(\xi)|a_m(\xi)) (\eta_n|\eta_m)
= \Big( \xi \Big| \sum_{n,m} a_n^*a_m (\eta_n|\eta_m) \xi\Big)
= \big( \xi \big| (\tau|\tau) \xi \big)
\leq \|\xi\|^2 \|\tau\|^2. \]
Thus $e_\xi$ is bounded, with $\|e_\xi\| \leq \|\xi\|$.  Notice that this calculation
also shows that
\[ \big( e_\xi(\tau) \big| e_\eta(\sigma) \big) = \big( \xi \big| (\tau|\sigma) \eta \big)
\qquad (\tau,\sigma\in A\otimes K, \xi,\eta\in H), \]
where here $(\tau|\sigma)\in A \subseteq \mc B(H)$.

The next two propositions show a tight connection between these ideas.  In the
following, we could have \emph{defined} $\tilde\alpha$ using (iii).  Notice that
as $A$ has a bounded approximate identity and is non-degenerately represented
on $H$, it follows that $H = \{ a(\xi) : a\in A, \xi\in H \}$; this uses the
Cohen Factorisation Theorem (compare \cite[Corollary~2.9.25]{dales} or
\cite[Theorem~A.1]{mas}).  However, we would still have to prove that $\tilde\alpha$
were well-defined.

\begin{proposition}\label{prop:six}
Let $A$ be a C$^*$-algebra faithfully and non-degenerately
represented on $H$, and let $K$ be a Hilbert
space.  Let $\alpha\in\mc L(A,A\otimes K)$ and $\mc T\in\mc L(A\otimes K)$ be related
by $\alpha = \mc T(\iota\otimes\xi_0)^*$, where $\xi_0\in K$ is a unit vector.
(For example, if given $\alpha$, we could define $\mc T = \alpha(\iota\otimes\xi_0)$).
Let $\tilde\alpha:H\rightarrow H\otimes K$ be the operator given by $\tilde\alpha(\xi)
= T(\xi\otimes\xi_0)$ for $\xi\in H$.  Then:
\begin{enumerate}
\item $\|\tilde\alpha\| = \|\alpha\|$;
\item $\tilde\alpha^* \tilde\alpha = \alpha^*\alpha \in \mc L(A)\cong M(A)$,
  where we identify $M(A)$ as a subalgebra of $\mc B(H)$.
\item $\tilde\alpha(a(\xi)) = e_\xi \alpha(a)$ for $a\in A$ and $\xi\in H$;
  so $\tilde\alpha$ depends only on $\alpha$ (and not $\xi_0$ or $\mc T$).
\end{enumerate}
\end{proposition}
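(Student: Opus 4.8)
The plan is to route everything through the spatial maps $e_\xi$ recorded just before the statement, whose key feature will be an intertwining relation between a module operator and its spatial realisation. The crucial preliminary step is to show that for any $\mc T\in\mc L(A\otimes K)$, with associated $T\in M(A\otimes\mc B_0(K))\subseteq\mc B(H\otimes K)$, one has $Te_\xi(\tau) = e_\xi(\mc T\tau)$ for all $\xi\in H$ and $\tau\in A\otimes K$. I would verify this first for $\mc T\in\mc K(A\otimes K)$ by a direct computation on elementary tensors: writing $\mc T = \theta_{a\otimes\zeta,b\otimes\eta}$, which corresponds to $ab^*\otimes\theta_{\zeta,\eta}$, both $Te_\xi$ and $e_\xi\mc T$ applied to $c\otimes\mu$ reduce to $(\eta|\mu)\,ab^*c(\xi)\otimes\zeta$. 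Linearity and norm-continuity then extend this to all of $\mc K(A\otimes K)$, and the strict density of the unit ball of $\mc K(A\otimes K)$ in that of $\mc L(A\otimes K)$, together with the continuity of $e_\xi$, extends it to every $\mc T\in\mc L(A\otimes K)$.

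Granting this, (iii) is immediate. Since $\alpha(a) = \mc T(\iota\otimes\xi_0)^*(a) = \mc T(a\otimes\xi_0)$ and $a(\xi)\otimes\xi_0 = e_\xi(a\otimes\xi_0)$, I compute $\tilde\alpha(a(\xi)) = T(a(\xi)\otimes\xi_0) = Te_\xi(a\otimes\xi_0) = e_\xi\mc T(a\otimes\xi_0) = e_\xi\alpha(a)$. As the right-hand side involves only $\alpha$ and $e_\xi$, and as every vector of $H$ has the form $a(\xi)$ by the Cohen factorisation remark preceding the statement, this both expresses $\tilde\alpha$ purely in terms of $\alpha$ and shows that it is independent of the choices of $\xi_0$ and $\mc T$.

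For (ii), I would combine (iii) with the inner-product identity $(e_\xi(\tau)|e_\eta(\sigma)) = (\xi|(\tau|\sigma)\eta)$ recorded above. For $a,b\in A$ and $\xi,\eta\in H$ this gives $(\tilde\alpha(a(\xi))|\tilde\alpha(b(\eta))) = (e_\xi\alpha(a)|e_\eta\alpha(b)) = (\xi|(\alpha(a)|\alpha(b))\eta)$. The $A$-valued inner product computes the adjoint, $(\alpha(a)|\alpha(b)) = (a|\alpha^*\alpha(b)) = a^*(\alpha^*\alpha)(b)$, where $\alpha^*\alpha\in\mc L(A)\cong M(A)$ acts by multiplication; hence the expression equals $(\xi|a^*(\alpha^*\alpha)(b)\eta) = (a(\xi)|(\alpha^*\alpha)(b(\eta)))$, the last inner product read inside $\mc B(H)$. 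Since the vectors $a(\xi)$ exhaust $H$ on both sides, this identifies $\tilde\alpha^*\tilde\alpha$ with $\alpha^*\alpha$ as operators on $H$, and in particular shows $\tilde\alpha^*\tilde\alpha\in M(A)$.

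Finally (i) follows formally: $\|\tilde\alpha\|^2 = \|\tilde\alpha^*\tilde\alpha\| = \|\alpha^*\alpha\| = \|\alpha\|^2$, where the outer equalities are the C$^*$-identity (in $\mc B(H)$ and in $\mc L(A,A\otimes K)$ respectively), the middle one is (ii), and I use that the inclusion $M(A)\hookrightarrow\mc B(H)$ coming from the faithful non-degenerate representation is isometric, so that the two norms of $\alpha^*\alpha$ agree. The only genuinely delicate point is the intertwining lemma of the first step, in particular the passage from $\mc K(A\otimes K)$ to $\mc L(A\otimes K)$, where one must check that strict approximation of $\mc T$ matches the strong convergence of the corresponding operators $T$ on $H\otimes K$; everything after that is a direct calculation.
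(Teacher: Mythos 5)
Your proposal is correct, and its skeleton is the same as the paper's: the intertwining relation $e_\xi(\mc T\tau)=Te_\xi(\tau)$, established first for compacts by the elementary-tensor computation, then (iii) immediately, (ii) via the identity $(e_\xi(\tau)|e_\eta(\sigma))=(\xi|(\tau|\sigma)\eta)$, and (i) by the C$^*$-identity together with the isometric inclusion $M(A)\subseteq\mc B(H)$. The one place you genuinely diverge is the step you yourself flag as delicate: extending the intertwining relation from $\mc K(A\otimes K)$ to $\mc L(A\otimes K)$. You approximate the \emph{operator}, using strict density of the unit ball of $\mc K(A\otimes K)$ in that of $\mc L(A\otimes K)$; this does work, but it requires checking that bounded strict convergence $\mc T_i\to\mc T$ yields norm convergence of $\mc T_i\tau$ in $A\otimes K$ and of $T_i v$ in $H\otimes K$ — both true, by writing $\tau=\theta(\sigma)$ with $\theta\in\mc K(A\otimes K)$, respectively $v=b(w)$ with $b\in A\otimes\mc B_0(K)$ (Cohen factorisation and non-degeneracy), and using that $\mc T_i\theta\to\mc T\theta$, respectively $T_ib\to Tb$, in norm — but you leave this unverified. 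The paper instead approximates in the \emph{domain}: it evaluates both sides only on vectors $\theta(\tau)$ with $\theta$ compact, notes that $\mc T\theta$ is again compact, invokes the defining multiplier relation $\Gamma(\mc T\theta)=T\Gamma(\theta)$, and concludes by density of $\mc K(A\otimes K)\cdot(A\otimes K)$ in $A\otimes K$ and norm continuity of the fixed maps $e_\phi\mc T$ and $Te_\phi$. That route is purely algebraic and sidesteps any matching of operator topologies; note that the verification your route needs is essentially the same Cohen-factorisation trick, so the paper's argument is in effect yours with the topological layer stripped out, at the cost of using the slightly less transparent relation $\Gamma(\mc T\theta)=T\Gamma(\theta)$ rather than the conceptually clean strict-density statement.
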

\begin{proof}
Let $\Gamma:\mc K(A\otimes K) \rightarrow A\otimes\mc B_0(K) \subseteq \mc B(H\otimes K)$
be the isomorphism, which satisfies $\Gamma(\theta_{a\otimes\xi,b\otimes\eta}) =
ab^* \otimes \theta_{\xi,\eta}$ for $a,b\in A$ and $\xi,\eta\in K$.
Thus, for $c\in A$, $\phi\in H$ and $\gamma\in K$,
\[ \Gamma(\theta_{a\otimes\xi,b\otimes\eta})(c(\phi)\otimes\gamma)
= ab^*c(\phi) \otimes \xi (\eta|\gamma). \]
Also, $e_\phi (\theta_{a\otimes\xi,b\otimes\eta}(c\otimes\gamma) ) =
ab^*c(\phi) \otimes\xi (\eta|\gamma)$.  Let $\theta\in\mc K(A\otimes K),
\tau\in A\otimes K$ and $\phi\in H$.  So we have shown that
$e_\phi( \theta(\tau) ) = \Gamma(\theta)( e_\phi(\tau) )$.
By definition, we have that $\Gamma( \mc T \theta) = T \Gamma(\theta)$, and so
\[ e_\phi\big( \mc T \theta(\tau) \big) = \Gamma(\mc T\theta) \big( e_\phi(\tau) \big)
= T \Gamma(\theta) \big( e_\phi(\tau) \big) = T e_\phi\big(\theta(\tau)\big). \]
By density, it follows that
\[ e_\phi\big( \mc T(\tau) \big) = T e_\phi(\tau)
\qquad (\tau\in A\otimes K, \phi\in H). \]

So immediately we see that for $a\in A$ and $\xi\in H$,
\[ \tilde\alpha\big( a(\xi) \big) = T (a(\xi)\otimes\xi_0) =
T e_\xi (a\otimes\xi_0) = e_\xi\big( \mc T (a\otimes\xi_0) \big)
= e_\xi \alpha(a), \]
as claimed.  Then, for $a,b\in A$ and $\xi,\eta\in H$,
\begin{align*} \big( \tilde\alpha(a(\xi)) \big| \tilde\alpha(b(\eta)) \big)
&= \big( e_{\xi} \alpha(a) \big| e_\eta \alpha(b) \big)
= \big( \xi \big| (\alpha(a)|\alpha(b)) \eta \big)
= \big( \xi \big| a^* \alpha^*\alpha b \eta \big)
= \big( a(\xi) \big| \alpha^* \alpha b(\eta) \big).
\end{align*}
It follows that $\tilde\alpha^*\tilde\alpha$ agrees with $\alpha^*\alpha$
as operators on $H$.  Then $\|\alpha\|^2 = \|\alpha^*\alpha\| =
\|\tilde\alpha^*\tilde\alpha\| = \|\tilde\alpha\|^2$, finishing the proof.
\end{proof}

\begin{proposition}\label{prop:three}
Let $B$ be a C$^*$-algebra and let $\phi:A\rightarrow M(B)$ be a non-degenerate
$*$-homomorphism.
Let $\alpha\in\mc L(A,A\otimes K)$ and $\mc T\in\mc L(A\otimes K)$ be related by
$\alpha = \mc T(\iota\otimes\xi_0)^*$, where $\xi_0\in K$ is a unit vector.
Let $S = (\phi\otimes\iota)T \in M(B\otimes\mc B_0(K))$, use this to induce
$\mc S\in\mc L(B\otimes K)$, and then define $\phi*\alpha = \mc S(\iota\otimes\xi_0)^*
\in \mc L(B,B\otimes K)$.  Then:
\begin{enumerate}
\item $(\iota\otimes\xi) (\phi*\alpha) =
\phi( (\iota\otimes\xi)\alpha )$ for each $\xi\in K$;
\item $\phi*\alpha$ depends only upon $\alpha$;
\end{enumerate}
\end{proposition}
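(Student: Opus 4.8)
The plan is to reduce everything to the spatial ``slice map'' picture already developed for Proposition~\ref{prop:six}, and then to isolate a single commutation identity between slicing and the morphism $\phi$. Throughout I write $\psi$ for vectors of Hilbert spaces (to avoid a clash with the morphism $\phi$), and I represent $A$ and $B$ faithfully and non-degenerately on $H_A$ and $H_B$, so that $T\in M(A\otimes\mc B_0(K))\subseteq\mc B(H_A\otimes K)$ and $S=(\phi\otimes\iota)(T)\in M(B\otimes\mc B_0(K))\subseteq\mc B(H_B\otimes K)$; here $\phi$ is extended to the unital morphism $M(A)\to M(B)$ and $\phi\otimes\iota$ to the non-degenerate morphism $M(A\otimes\mc B_0(K))\to M(B\otimes\mc B_0(K))$. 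For the fixed unit vector $\xi_0$ and any $\xi\in K$, let $\omega_{\xi,\xi_0}$ be the normal functional $x\mapsto(\xi|x\xi_0)$ on $\mc B(K)$.

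First I would record the dictionary between the module maps $\iota\otimes\xi$ and the Hilbert-space slices. On elementary tensors one checks immediately the compatibility $[(\iota\otimes\xi)\tau]\psi=(\iota\otimes\xi)e_\psi(\tau)$, where on the left $\iota\otimes\xi:A\otimes K\to A$ is the module map and on the right $\iota\otimes\xi:H_A\otimes K\to H_A$ is the Hilbert-space slice. Combining this with the identity $e_\psi(\mc T(\tau))=Te_\psi(\tau)$ established inside the proof of Proposition~\ref{prop:six}, evaluation on $a(\psi)$ gives
\[ \big[(\iota\otimes\xi)\alpha\big]a(\psi) = (\iota\otimes\xi)e_\psi\big(\mc T(a\otimes\xi_0)\big) = (\iota\otimes\xi)T(a\psi\otimes\xi_0) = \big[(\iota\otimes\omega_{\xi,\xi_0})(T)\big]a(\psi), \]
so that, and by the identical computation over $B$ for $\phi*\alpha$ and $S$,
\[ (\iota\otimes\xi)\alpha = (\iota\otimes\omega_{\xi,\xi_0})(T)\in M(A), \qquad (\iota\otimes\xi)(\phi*\alpha) = (\iota\otimes\omega_{\xi,\xi_0})(S)\in M(B). \]
Note that the left-hand sides are composites of adjointable maps, hence automatically lie in the multiplier algebras; this side-steps proving separately that the spatial slices land in $M(A)$ and $M(B)$.

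With these two identities in hand, part (i) reduces to the assertion that slicing in the $\mc B_0(K)$-leg commutes with applying $\phi$ in the $A$-leg, namely
\[ (\iota\otimes\omega_{\xi,\xi_0})\big((\phi\otimes\iota)(T)\big) = \phi\big((\iota\otimes\omega_{\xi,\xi_0})(T)\big). \]
I would prove this commutation identity by first checking it on elementary tensors $a\otimes k$ (both sides equal $\phi(a)\,\omega_{\xi,\xi_0}(k)$), extending by linearity and norm-continuity to $A\otimes\mc B_0(K)$, and finally passing to $T\in M(A\otimes\mc B_0(K))$ by approximating $T$ strictly by elements of $A\otimes\mc B_0(K)$. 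The device making the last step work is that for $a\in A$ and $k\in\mc B_0(K)$ the products $T(a\otimes k)$ already lie in $A\otimes\mc B_0(K)$, where the slice is norm-continuous; choosing $k$ with $k\xi_0=\xi_0$ lets one write $(\iota\otimes\omega_{\xi,\xi_0})(T)\cdot a=(\iota\otimes\omega_{\xi,\xi_0})(T(a\otimes k))$, so both sides of the identity are strictly continuous in $T$. Granting this, the two displayed identities yield $(\iota\otimes\xi)(\phi*\alpha)=\phi((\iota\otimes\xi)\alpha)$ for every $\xi\in K$, which is (i). I expect this interchange of slicing with $\phi$ at the level of multiplier algebras to be the main obstacle; it is exactly the kind of good behaviour of slice maps on multiplier algebras that is standard in the C$^*$-algebraic quantum group literature, and I would either cite it or spell out the strict-continuity argument just sketched.

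Finally, for (ii) I would deduce independence of the choices from (i). The right-hand side $\phi((\iota\otimes\xi)\alpha)$ involves only $\alpha$ and $\phi$, since the composite $(\iota\otimes\xi)\alpha$ is intrinsic to $\alpha$ with no reference to $\mc T$ or $\xi_0$; hence the family $\{(\iota\otimes\xi)(\phi*\alpha):\xi\in K\}$ depends only on $\alpha$ and $\phi$. It then suffices to observe that any $\beta\in\mc L(B,B\otimes K)$ is recovered from this family: fixing an orthonormal basis $(\xi_j)$ of $K$ and writing $P_N$ for the projection onto the span of $\xi_1,\dots,\xi_N$, one has $\beta(b)=\lim_N(\iota\otimes P_N)\beta(b)=\lim_N\sum_{j\le N}(\iota\otimes\xi_j)^*\big[(\iota\otimes\xi_j)\beta\big](b)$ in the norm of $B\otimes K$, because $\iota\otimes P_N\to\iota$ strictly on $B\otimes K$. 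Applying this with $\beta=\phi*\alpha$ shows that $\phi*\alpha$ is determined by $\alpha$ and $\phi$ alone, which is (ii).
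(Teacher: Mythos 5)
Your proof is correct, but it takes a genuinely different route from the paper's. The paper stays entirely inside the Hilbert C$^*$-module framework: it forms the strict extension $\tilde\phi_0:\mc L(A\otimes K)\rightarrow\mc L(B\otimes K)$, reduces the desired identity $\phi(x)=y$ (by non-degeneracy of $\phi$) to checking the products $\phi(xa)b=y\phi(a)b$, and verifies these by an explicit $\epsilon$-argument, approximating $\mc T(a\otimes\xi_0)$ by finite sums $\sum_k a_k\otimes\xi_k$ and computing with the operators $\theta_{x,y}$; no Hilbert space representation of $B$ is ever chosen. You instead represent both $A$ and $B$ spatially, translate the module slices $(\iota\otimes\xi)\alpha$ into vector-functional slices $(\iota\otimes\omega_{\xi,\xi_0})(T)$ via the maps $e_\psi$ from Proposition~\ref{prop:six}, and isolate a single key lemma: the commutation $(\iota\otimes\omega_{\xi,\xi_0})\circ(\phi\otimes\iota)=\phi\circ(\iota\otimes\omega_{\xi,\xi_0})$ on $M(A\otimes\mc B_0(K))$. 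Your strict-continuity device is sound --- choosing $k$ with $k\xi_0=\xi_0$ makes both sides of the lemma, tested against products $\phi(a)b$ (which by non-degeneracy suffice to determine an element of $M(B)$), strictly continuous in $T$, so agreement on $A\otimes\mc B_0(K)$ passes to the multiplier algebra. Both arguments thus rest on the same two pillars, non-degeneracy of $\phi$ and strict density of $A\otimes\mc B_0(K)$ in $M(A\otimes\mc B_0(K))$, so the analytic content is essentially equal, but the packaging differs: the paper's route is representation-free and self-contained, while yours factors the proof through a reusable slice-map lemma of the kind standard in the C$^*$-algebraic quantum group literature. Yours also has the merit of actually justifying (ii) --- reconstructing a map in $\mc L(B,B\otimes K)$ from its slices via $\iota\otimes P_N$ --- a step the paper dismisses as immediate.
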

\begin{proof}
As before, let $\Gamma:\mc K(A\otimes K)\rightarrow A\otimes\mc B_0(K)$ be the
isomorphism, with strict extension $\tilde\Gamma$; we use the same notation for
the isomorphism $\mc K(B\otimes K)\rightarrow B\otimes\mc B_0(K)$.
Let $\phi_0$ be the following composition
\[ \xymatrix{ \mc K(A\otimes K) \ar[r]^\Gamma &
A \otimes \mc B_0(K) \ar[r]^{\phi\otimes\iota} &
M(B) \otimes \mc B_0(K) \ar@{^{(}->}[r] &
M(B\otimes\mc B_0(K)) \ar[r]^{\tilde\Gamma^{-1}} &
\mc L(B\otimes K), } \]
and let $\tilde\phi_0:\mc L(A\otimes K)\rightarrow \mc L(B\otimes K)$
be the strict extension.  Thus $\mc S = \tilde\phi_0(\mc T)$.
For $\xi\in K$, let
\[ y = (\iota\otimes\xi) \tilde\phi_0(\mc T) (\iota\otimes\xi_0)^*
\in M(B), \quad x = (\iota\otimes\xi)\mc T(\iota\otimes\xi_0)^* \in M(A). \]
To show (i), we are required to show that $\phi(x)=y$.
As $\phi$ is non-degenerate, this is equivalent to
$\phi(xa)b = y\phi(a)b$ for $a\in A,b\in B$, that is,
\[ \phi\big( (\iota\otimes\xi)\mc T(a\otimes\xi_0) \big) b
= (\iota\otimes\xi)\tilde\phi_0(\mc T)(\phi(a)b\otimes\xi_0) \qquad (a\in A,b\in B). \]

Now, for $a,c\in A$, $b\in B$ and $\eta,\gamma\in K$,
\[ \tilde\phi_0\big(\theta_{a\otimes\xi_0,c\otimes\eta}\big)(b\otimes\gamma)
= \Gamma^{-1}\Big((\phi\otimes\iota)\big( ac^*\otimes\theta_{\xi_0,\eta} \big)\Big) (b\otimes\gamma)
= \phi(ac^*)b \otimes \xi_0 (\eta|\gamma). \]
So also
\[ \tilde\phi_0(\mc T) \big( \phi(ac^*)b \otimes \xi_0 \big) (\eta|\gamma) 
= \tilde\phi_0(\mc T)\tilde\phi_0\big(\theta_{a\otimes\xi_0,c\otimes\eta}\big)(b\otimes\gamma)
= \tilde\phi_0\big(\theta_{\mc T(a\otimes\xi_0),c\otimes\eta}\big)(b\otimes\gamma). \]

It seems easier to use an approximation argument now.  For $\epsilon>0$,
we can find $\tau\in A\odot K$ with
\[ \tau = \sum_k a_k \otimes \xi_k, \quad
\big\| \mc T(a\otimes\xi_0) - \tau \big\|\leq\epsilon. \]
Then $\| \theta_{\mc T(a\otimes\xi_0),c\otimes\eta} - \theta_{\tau,c\otimes\eta} \|
\leq \epsilon \|c\| \|\eta\|$.  Thus the previous paragraph shows that
\[ \Big\| \tilde\phi_0(\mc T) \big( \phi(ac^*)b \otimes \xi_0 \big) (\eta|\gamma) -
\sum_k \phi(a_kc^*)b \otimes \xi_k(\eta|\gamma) \Big\|
\leq \epsilon\|c\|\|\eta\|\|b\|\|\gamma\|, \]
Letting $c$ run through an approximate identity for $A$, and choosing $\eta=\gamma$
to be a unit vector shows that
\[ \Big\| \tilde\phi_0(\mc T) \big( \phi(a)b \otimes \xi_0 \big) -
\sum_k \phi(a_k)b \otimes \xi_k \Big\|
\leq \epsilon \|b\|. \]
Thus also
\[ \Big\| (\iota\otimes\xi)\tilde\phi_0(\mc T) \big( \phi(a)b \otimes \xi_0 \big) -
\sum_k \phi(a_k)b \otimes (\xi|\xi_k) \Big\|
\leq \epsilon \|b\|\|\xi\|. \]
However, similarly
\[ \Big\| \phi\big( (\iota\otimes\xi)\mc T(a\otimes\xi_0) \big) b -
\sum_k \phi(a_k)b \otimes (\xi|\xi_k) \Big\| \leq \epsilon \|b\| \|\xi\|. \]
As $\epsilon>0$ was arbitrary, this completes the proof of (i).
It is immediate that (i) implies (ii).
\end{proof}

\begin{proposition}\label{prop:five}
With the notation of the previous proposition, suppose that $B$ is non-degenerately
represented on $H\otimes H$, and that for some $V\in\mc B(H\otimes H)$, we have
that $\phi(a) = V^*(1\otimes a)V$ for $a\in A$.  Then
$(\phi*\alpha)\tilde{\,} = V_{12}^*(1\otimes\tilde\alpha)V$.
\end{proposition}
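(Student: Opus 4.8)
The plan is to prove the operator identity by ``slicing off'' the final copy of $K$. It suffices to show that the two operators $H\otimes H\rightarrow H\otimes H\otimes K$ agree after composing with $\iota\otimes\xi$, the contraction of the last leg against an arbitrary vector $\xi\in K$: as $\xi$ ranges over $K$ these slices separate the points of $\mc L(H\otimes H, H\otimes H\otimes K)$ (write $(\phi*\alpha)\tilde{\,}(\zeta)$ in coordinates against an orthonormal basis of $K$). The advantage of passing to slices is that $\iota\otimes\xi$ converts everything into a statement about \emph{multipliers}, where Proposition~\ref{prop:three}(i) applies on the nose.

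The one auxiliary fact I need is a compatibility between the contraction $\iota\otimes\xi$ and the tilde construction: for any C$^*$-algebra $E$ faithfully and non-degenerately represented on a Hilbert space $L$ and any $\gamma\in\mc L(E,E\otimes K)$, the multiplier $(\iota\otimes\xi)\gamma\in M(E)\subseteq\mc B(L)$ coincides, as an operator on $L$, with the slice $(\iota\otimes\xi)\tilde\gamma$ of $\tilde\gamma:L\rightarrow L\otimes K$. This is immediate from Proposition~\ref{prop:six}(iii): writing a typical vector of $L$ as $b(\zeta)$ with $b\in E$, $\zeta\in L$, one has $\tilde\gamma(b(\zeta)) = e_\zeta\gamma(b)$, and applying $\iota\otimes\xi$ to the last leg turns the right-hand side into $\big((\iota\otimes\xi)\gamma(b)\big)(\zeta) = \big((\iota\otimes\xi)\gamma\big)\cdot b(\zeta)$.

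Now fix $\xi\in K$ and apply this compatibility with $\gamma=\alpha$ (so $E=A$, $L=H$) and with $\gamma=\phi*\alpha$ (so $E=B$, $L=H\otimes H$). Slicing the left-hand side of the asserted identity gives $(\iota\otimes\xi)(\phi*\alpha)\tilde{\,} = (\iota\otimes\xi)(\phi*\alpha)$, which by Proposition~\ref{prop:three}(i) equals $\phi\big((\iota\otimes\xi)\alpha\big)$. Slicing the right-hand side, and using that $\iota\otimes\xi$ acts on the third leg while $V_{12}^*$ acts on the first two so that the two commute, gives
\[ (\iota\otimes\xi)\big[V_{12}^*(1\otimes\tilde\alpha)V\big]
= V^*\big(1\otimes(\iota\otimes\xi)\tilde\alpha\big)V
= V^*\big(1\otimes(\iota\otimes\xi)\alpha\big)V, \]
the last step again being the compatibility fact. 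Since $\phi(a)=V^*(1\otimes a)V$ extends, by strict continuity of the unique extension of the non-degenerate homomorphism $\phi$ to $M(A)$ (which remains spatially implemented by $V$, because $a'\rightarrow x$ strictly in $M(A)$ forces $1\otimes a'\rightarrow 1\otimes x$ strongly), we have $\phi(x)=V^*(1\otimes x)V$ for all $x\in M(A)$; hence the right-hand slice also equals $\phi\big((\iota\otimes\xi)\alpha\big)$. The two slices agree for every $\xi\in K$, and therefore the operators agree.

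The delicate points here are bookkeeping rather than conceptual. First, one must keep the three legs of $H\otimes H\otimes K$ straight: the copy of $H$ carrying the representation of $A$ becomes the \emph{second} leg after applying $\phi$, the new leg produced by $V^*(1\otimes\cdot)V$ is the \emph{first}, and the module leg $K$ is the \emph{third}; this is exactly what makes $V_{12}$ (rather than $V_{13}$ or $V_{23}$) the correct operator and what lets the slice commute past it. Second, I expect the step needing most care is checking that the spatial formula for $\phi$ survives passage to $M(A)$, although this is routine. A purely computational alternative would be to identify $S=(\phi\otimes\iota)T$ directly as $V_{12}^*(1\otimes T)V_{12}$ and evaluate $S(\zeta\otimes\xi_0)$, but that route requires justifying the strict extension $(\phi\otimes\iota)T = V_{12}^*(1\otimes T)V_{12}$ for the multiplier $T$, which is less transparent than the slice argument above.
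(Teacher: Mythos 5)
Your argument is correct, but it takes a genuinely different route from the paper --- indeed, the ``purely computational alternative'' that you describe and decline in your final paragraph is exactly the paper's proof. The paper combines Propositions~\ref{prop:six} and~\ref{prop:three} to write $(\phi*\alpha)\tilde{\,}(\zeta)=S(\zeta\otimes\xi_0)$ for $\zeta\in H\otimes H$, asserts that ``clearly'' $S=V_{12}^*T_{23}V_{12}$ as operators on $H\otimes H\otimes K$ (this is the strict extension of the spatial formula, applied to $\phi\otimes\iota$ and the multiplier $T$), and then simply evaluates: $V_{12}^*T_{23}(V\zeta\otimes\xi_0)=V_{12}^*(1\otimes\tilde\alpha)V\zeta$. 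You instead verify the identity slice by slice against vectors $\xi\in K$: this converts the claim into the multiplier identity of Proposition~\ref{prop:three}(i), via the compatibility $(\iota\otimes\xi)\tilde\gamma=(\iota\otimes\xi)\gamma$, which you correctly extract from Proposition~\ref{prop:six}(iii). The trade-off is this: the paper's proof is two lines, but it buries the multiplier-extension issue inside the word ``clearly'' and never actually uses part~(i) of Proposition~\ref{prop:three}, only the construction of $S$; your proof re-uses the hard-won slice identity~(i), whose proof in the paper was the laborious approximation argument, and in exchange needs the strict extension of the spatial implementation only for $\phi$ itself on $M(A)$ --- one tensor leg fewer than the extension the paper implicitly needs for $\phi\otimes\iota$, though the verification is essentially the same in both cases. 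Your parenthetical justification of that step (strict convergence $a'\to x$ in $M(A)$ forces $1\otimes a'\to 1\otimes x$ strongly, with no boundedness hypothesis) does hold up, because Cohen factorisation writes every vector of $H\otimes H$ in the form $(1\otimes a)w$ with $a\in A$ --- the same device the paper invokes after Proposition~\ref{prop:three} --- so the ``routine'' step you flag genuinely closes.
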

\begin{proof}
Combining the two previous propositions, we see that $(\phi*\alpha)\tilde{\,}(\xi) =
S(\xi\otimes\xi_0)$ for $\xi\in H\otimes H$.  Now, clearly $S = V_{12}^* T_{23} V_{12}
\in \mc B(H\otimes H\otimes K)$, and so
\[ (\phi*\alpha)\tilde{\,}(\xi) = V_{12}^* T_{23} \big( V(\xi)\otimes\xi_0 \big)
= V_{12}^* (1\otimes\tilde\alpha) V(\xi) \qquad (\xi\in H\otimes H), \]
as required.
\end{proof}

\section{Left-multipliers}

Let $\G$ be a locally compact quantum group.
In this section, we prove a complete analogy of Gilbert's result, for
represented, completely bounded left multipliers of $L^1(\hat\G)$.

Let $K$ be a Hilbert space, and consider the Hilbert C$^*$-module $C_0(\G)\otimes K$.
We shall say that a pair $(\alpha,\beta)$ of maps in $\mc L(C_0(\G),C_0(\G)\otimes K)$
is \emph{invariant} if
\[ (1\otimes\beta)^*(\Delta*\alpha) \in \mc L(C_0(\G)\otimes C_0(\G))
= M(C_0(\G)\otimes C_0(\G)) \]
is really in $C^b(\G)\otimes 1$.  Here $\Delta:C_0(\G)\rightarrow
M(C_0(\G)\otimes C_0(\G))$ is non-degenerate, and so we can apply
Proposition~\ref{prop:three} to form $\Delta*\alpha \in \mc L(C_0(\G)\otimes C_0(\G),
C_0(\G) \otimes C_0(\G) \otimes K)$.

When $\G = G$ a locally compact group, then $\alpha,\beta\in C^b(G,K)$,
and $\Delta*\alpha\in C^b(G\times G,K)$.  For $\xi\in K$ and $s,t\in G$, we have
\[ \big( \xi \big| (\Delta*\alpha)(s,t) \big) = (\iota\otimes\xi)(\Delta*\alpha)(s,t)
= \Delta\big( (\iota\otimes\xi)\alpha \big)(s,t)
= \big( (\iota\otimes\xi)\alpha \big)(st)
= \big(\xi\big|\alpha(st)\big). \]
So $(\Delta*\alpha)(s,t) = \alpha(st)$, as we might hope.  Then $(\alpha,\beta)$
is an invariant pair if there exists $f\in C^b(G)$ with
\[ \big( \beta(t) \big| \alpha(st) \big) = f(s) \qquad (s,t\in G), \]
or equivalently, if $f(st^{-1}) = (\beta(t)|\alpha(s))$ for $s,t\in G$.
This is clearly equivalent, though not identical, to Gilbert's condition, as outlined
in the introduction.  Proposition~\ref{prop:two} below shows that it is no surprise
that the $f\in C^b(G)$ appearing from $(1\otimes\beta)^*(\Delta*\alpha)=f\otimes 1$
should be the multiplier given by the pair $(\alpha,\beta)$.

By Proposition~\ref{prop:five}, we see that,
equivalently, $(\alpha,\beta)$ is invariant if
\[ (1\otimes\tilde\beta)^* W_{12}^*(1\otimes\tilde\alpha)W \in C^b(\G)\otimes 1, \]
as operators on $\mc B(L^2(\G)\otimes L^2(\G))$.
Here we use that $(1\otimes\beta)\tilde{} = 1\otimes\tilde\beta$.

\begin{proposition}\label{prop:two}
Let $\alpha, \beta \in \mc L(C_0(\G),C_0(\G)\otimes K)$, and for
$x\in L^\infty(\hat\G)$, define $L(x) = \tilde\beta^*(x\otimes 1)\tilde\alpha$.
Let $a\in C^b(\G)$.  The following are equivalent:
\begin{enumerate}
\item $L$ is the adjoint a completely bounded left multiplier on $L^1(\hat\G)$
  represented by $a$;
\item the pair $(\alpha,\beta)$ is invariant, with $(1\otimes\beta)^*(\Delta*\alpha)
  = a \otimes 1$.
\end{enumerate}
\end{proposition}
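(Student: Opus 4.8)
The plan is to translate each of the two conditions into a single operator identity on $\mc B(L^2(\G) \otimes L^2(\G))$ and then to pass between them by a purely computational manipulation with the flip map. On the one hand, the map $L(x) = \tilde\beta^*(x \otimes 1)\tilde\alpha$ is manifestly a normal, completely bounded map on $\mc B(L^2(\G))$, so by Proposition~\ref{prop:four} together with the remark following Proposition~\ref{prop:one}, condition (i) is equivalent to the single identity $(L \otimes \iota)(\hat W) = (1 \otimes a)\hat W$. On the other hand, as recorded just before the statement, invariance of $(\alpha,\beta)$ with the normalisation $(1 \otimes \beta)^*(\Delta * \alpha) = a \otimes 1$ is, after applying Proposition~\ref{prop:five} (with $V = W$, since $\Delta(x) = W^*(1 \otimes x)W$) and using $(1 \otimes \beta)\tilde{\,} = 1 \otimes \tilde\beta$, equivalent to the identity $(1 \otimes \tilde\beta)^* W_{12}^*(1 \otimes \tilde\alpha)W = a \otimes 1$. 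Thus the whole proposition reduces to showing that these two operator identities are equivalent.

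First I would compute the left-hand side of the first identity explicitly. Writing $K$ as the third leg, I claim that for any $Y \in \mc B(L^2(\G) \otimes L^2(\G))$ one has $(L \otimes \iota)(Y) = \tilde\beta_{13}^* Y_{12} \tilde\alpha_{13}$, where $\tilde\alpha_{13}, \tilde\beta_{13}$ denote $\tilde\alpha, \tilde\beta$ acting on the first leg and creating the third ($K$) leg. Both sides are normal in $Y$, so it suffices to check this on elementary tensors $Y = y_1 \otimes y_2$, where it reduces to $L(y_1) \otimes y_2 = (\tilde\beta^*(y_1 \otimes 1)\tilde\alpha) \otimes y_2$; this is immediate, since the second-leg operator $y_2$ commutes past the first/third-leg operations. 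Specialising to $Y = \hat W$ gives $(L \otimes \iota)(\hat W) = \tilde\beta_{13}^* \hat W_{12} \tilde\alpha_{13}$.

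Next I would insert $\hat W = \sigma W^* \sigma$ and move the flips outwards. Using $\sigma_{12}\tilde\alpha_{13}\sigma_{12} = \tilde\alpha_{23}$ and the adjoint relation $\tilde\beta_{13}^*\sigma_{12} = \sigma_{12}\tilde\beta_{23}^*$ (so that the $K$-leg stays fixed while the action moves from the first to the second leg), one obtains
\[ (L \otimes \iota)(\hat W) = \sigma_{12}\big( \tilde\beta_{23}^* W_{12}^* \tilde\alpha_{23} \big)\sigma_{12}. \]
Since $\tilde\alpha_{23} = 1 \otimes \tilde\alpha$ and $\tilde\beta_{23} = 1 \otimes \tilde\beta$, the bracketed term is exactly $(1 \otimes \tilde\beta)^* W_{12}^*(1 \otimes \tilde\alpha)$, which differs from the invariance expression only by a factor of $W$ on the right. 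Finally, conjugation by $\sigma_{12}$ interchanges $a \otimes 1$ with $1 \otimes a$ and sends $W^*$ to $\hat W$; hence $(L \otimes \iota)(\hat W) = (1 \otimes a)\hat W$ holds if and only if $(1 \otimes \tilde\beta)^* W_{12}^*(1 \otimes \tilde\alpha) = (a \otimes 1)W^*$, i.e.\ if and only if $(1 \otimes \tilde\beta)^* W_{12}^*(1 \otimes \tilde\alpha)W = a \otimes 1$. This is precisely the invariance identity, and reading the chain in both directions gives the desired equivalence of (i) and (ii).

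I expect the main obstacle to be bookkeeping rather than anything conceptual: getting the leg numbering and the flip relations exactly right, and in particular being careful that the third ($K$) leg is untouched by $\sigma_{12}$ while $\tilde\alpha, \tilde\beta$ genuinely move from the first to the second leg under conjugation. A secondary point that must not be skipped is the justification of the slice formula $(L \otimes \iota)(Y) = \tilde\beta_{13}^* Y_{12}\tilde\alpha_{13}$, which rests on the normality and complete boundedness of the sandwich map $L$; this is also what licenses the appeal to Proposition~\ref{prop:four} and the remark after Proposition~\ref{prop:one} in the first place.
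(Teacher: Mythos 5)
Your proposal is correct and follows essentially the same route as the paper: both reduce condition (i) to the operator identity $(L\otimes\iota)(\hat W)=(1\otimes a)\hat W$ via Proposition~\ref{prop:four} and the left version of Proposition~\ref{prop:one}, reduce condition (ii) to the identity $(1\otimes\tilde\beta)^*W_{12}^*(1\otimes\tilde\alpha)W=a\otimes 1$ via Proposition~\ref{prop:five}, and then identify the two by the same flip computation $(L\otimes\iota)(\hat W)=\sigma(1\otimes\tilde\beta)^*W_{12}^*(1\otimes\tilde\alpha)\sigma$. The only differences are cosmetic: you place the $K$-leg third where the paper places it second, and you spell out the elementary-tensor/normality justification of the slice formula that the paper writes down directly.
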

\begin{proof}
Clearly $L$ is a normal completely bounded map $L^\infty(\hat\G)\rightarrow\mc B(L^2(\G))$.
As $\hat W = \sigma W^* \sigma$, we see that
\begin{align*} (L\otimes\iota)(\hat W) &= 
   (\tilde\beta^*\otimes 1) \hat W_{13} (\tilde\alpha\otimes 1)
= (\tilde\beta^*\otimes 1) \sigma_{13}W^*_{13}\sigma_{13} (\tilde\alpha\otimes 1) \\
&= \sigma (1\otimes\tilde\beta^*\sigma)W^*_{13}(1\otimes\sigma\tilde\alpha)\sigma 
= \sigma (1\otimes\tilde\beta^*)W^*_{12}(1\otimes\tilde\alpha)\sigma.
\end{align*}
So, if (2) holds, then
\[ (L\otimes\iota)(\hat W) = \sigma (a\otimes 1)W^* \sigma = (1\otimes a) \hat W. \]
By the (left) version of Proposition~\ref{prop:one}, it follows that (1) holds.

Conversely, if (1) holds, then by Proposition~\ref{prop:four}, we have that
$(L\otimes\iota)(\hat W) = (1\otimes a)\hat W$, which shows that (2) holds.
\end{proof}

Notice that we here assume that $a\in C^b(\G)$, while in Section~\ref{multdual}
we could only ensure that $a\in L^\infty(\G)$.  The next result clarifies this.

\begin{theorem}\label{thm:three}
Let $L_*\in\mc{CB}(L^1(\hat\G))$ and $a\in L^\infty(\G)$ be such that
$a \hat\lambda(\hat\omega) = \hat\lambda(L_*(\hat\omega))$ for $\hat\omega\in L^1(\hat\G)$.
There exists a Hilbert space $K$ and an invariant pair $(\alpha,\beta)$ of maps
in $\mc L(C_0(\G),C_0(\G)\otimes K)$ such that $(\alpha,\beta)$ induces $L = (L_*)^*$
as in Proposition~\ref{prop:two}, and with $\|\alpha\|\|\beta\|=\|L\|_{cb}$.
Furthermore, automatically $a\in C^b(\G)$, so $L_*$ is represented.
\end{theorem}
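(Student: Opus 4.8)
The plan is to produce the pair $(\alpha,\beta)$ from a minimal representation of the completely bounded map $L=(L_*)^*$, and then to read off both invariance and the membership $a\in C^b(\G)$ from a single covariance relation. Throughout I write $H=L^2(\G)$, so that $L$ is a normal completely bounded map $L^\infty(\hat\G)\to L^\infty(\hat\G)\subseteq\mc B(H)$. By Proposition~\ref{prop:four} we already know the covariance relation $(L\otimes\iota)(\hat W)=(1\otimes a)\hat W$, and this identity is what ultimately upgrades $a$ from $L^\infty(\G)$ to $C^b(\G)$.

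First I would apply the representation theorem for normal completely bounded maps (see \cite{ER}) to factor $L(x)=V^*\pi(x)W$, where $\pi$ is a normal $*$-representation of $L^\infty(\hat\G)$ and $\|V\|\,\|W\|=\|L\|_{cb}$, taking the dilation to be minimal. Since $L^\infty(\hat\G)$ acts in standard form on $H$, every normal representation is quasi-contained in the identity representation, so $\pi$ may be realised (up to an intertwining isometry, which I absorb into $V$ and $W$) as a subrepresentation of an amplification $x\mapsto x\otimes 1$ on $H\otimes K$ for a suitable Hilbert space $K$. This yields operators $\tilde\alpha,\tilde\beta\colon H\to H\otimes K$ with $L(x)=\tilde\beta^*(x\otimes1)\tilde\alpha$ and $\|\tilde\alpha\|\,\|\tilde\beta\|=\|L\|_{cb}$.

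The main obstacle is to show that $\tilde\alpha$ and $\tilde\beta$ are genuinely of the Hilbert module form, that is, that they arise from adjointable maps $\alpha,\beta\in\mc L(C_0(\G),C_0(\G)\otimes K)$ in the sense of Proposition~\ref{prop:six}. Using the corner description of Section~\ref{cstarmods}, this amounts to checking that the operator $\tilde\alpha(1\otimes\xi_0)$ on $H\otimes K$ (and its analogue for $\beta$) is a multiplier of $C_0(\G)\otimes\mc B_0(K)$; concretely, that $\tilde\alpha b$ is $C_0(\G)$-valued for every $b\in C_0(\G)$, and similarly for $\tilde\beta$. This is where I expect the real difficulty to lie, and where the covariance relation together with minimality of the dilation must be used: slicing $(L\otimes\iota)(\hat W)=(1\otimes a)\hat W$ against functionals, and exploiting that the slices $(\iota\otimes\omega)(W)$ lie densely in $C_0(\G)$, should force the required $C_0(\G)$-module compatibility of $\tilde\alpha,\tilde\beta$; a Cohen factorisation argument will be needed to descend from the $\sigma$-weak level to the C$^*$-algebra $C_0(\G)$. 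Once this is established, $\|\alpha\|=\|\tilde\alpha\|$ and $\|\beta\|=\|\tilde\beta\|$ by Proposition~\ref{prop:six}(i), giving $\|\alpha\|\,\|\beta\|=\|L\|_{cb}$ (the reverse inequality being automatic from the defining formula $L(x)=\tilde\beta^*(x\otimes1)\tilde\alpha$).

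Finally I would close the argument as in Proposition~\ref{prop:two}, but run in reverse so as to extract the extra information. By Proposition~\ref{prop:five} applied to $\phi=\Delta$ and $V=W$, we have $(\Delta*\alpha)\tilde{\,}=W_{12}^*(1\otimes\tilde\alpha)W$, so $(1\otimes\beta)^*(\Delta*\alpha)$ has associated operator $(1\otimes\tilde\beta)^*W_{12}^*(1\otimes\tilde\alpha)W$. Reversing the computation in the proof of Proposition~\ref{prop:two} and feeding in $(L\otimes\iota)(\hat W)=(1\otimes a)\hat W$, this operator equals $a\otimes1$. But $(1\otimes\beta)^*(\Delta*\alpha)$ lies in $\mc L(C_0(\G)\otimes C_0(\G))=M(C_0(\G)\otimes C_0(\G))$; hence $a\otimes1\in M(C_0(\G)\otimes C_0(\G))$, which forces $a\in M(C_0(\G))=C^b(\G)$. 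In particular the pair $(\alpha,\beta)$ is invariant with $(1\otimes\beta)^*(\Delta*\alpha)=a\otimes1$, it induces $L$ as in Proposition~\ref{prop:two}, and $L_*$ is represented, completing the proof.
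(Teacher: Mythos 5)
You have correctly identified the crux of the theorem---that the operators $\tilde\alpha,\tilde\beta$ implementing $L(x)=\tilde\beta^*(x\otimes 1)\tilde\alpha$ must come from adjointable module maps $\alpha,\beta\in\mc L(C_0(\G),C_0(\G)\otimes K)$---but your proposal does not prove this step, and the mechanism you suggest for it cannot work. The difficulty is a gauge problem: if $u$ is any unitary in the commutant $(L^\infty(\hat\G)\otimes 1)'=L^\infty(\hat\G)'\vnten\mc B(K)$, then replacing $(\tilde\alpha,\tilde\beta)$ by $(u\tilde\alpha,u\tilde\beta)$ gives another (still minimal) dilation of the \emph{same} map $L$, hence satisfying every consequence of the covariance relation $(L\otimes\iota)(\hat W)=(1\otimes a)\hat W$; but module compatibility is not invariant under this replacement. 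So the data you propose to use (covariance plus minimality) is gauge-invariant while the conclusion you want is not, and no amount of slicing can bridge that. There is also a circularity: the covariance relation is a $\sigma$-weak (von Neumann level) identity and a priori $a\in L^\infty(\G)$ only, so slicing it yields $L^\infty$-level information, whereas membership in $C_0(\G)$ or $C^b(\G)$ is exactly the C$^*$-level conclusion being sought; Cohen factorisation does not convert the one into the other.

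The missing ingredient, which is how the paper proceeds, is Kustermans's correspondence between representations and corepresentations \cite[Corollary~4.3]{kus1}: since $\pi$ may be taken normal and unital, the map $L^1_\sharp(\G)\to M(\mc B_0(K));\ \omega\mapsto\pi(\lambda(\omega))$ is a non-degenerate $*$-homomorphism, and is therefore implemented by a unitary $U\in M(C_0(\G)\otimes\mc B_0(K))$ with $(\Delta\otimes\iota)(U)=U_{13}U_{23}$; one checks $U=(\iota\otimes\pi)(W)$. Because $U$ and $W$ lie in multiplier algebras, they give adjointable operators $\mc U\in\mc L(C_0(\G)\otimes K)$ and $\mc W\in\mc L(C_0(\G)\otimes L^2(\G))$, and the paper defines the \emph{twisted} maps $\alpha=\mc U^*(1\otimes P)\mc W(\iota\otimes\xi_0)^*$ and $\beta=\mc U^*(1\otimes Q)\mc W(\iota\otimes\xi_0)^*$, which are adjointable module maps by construction---no compatibility check is ever needed, and it is precisely this twist by $U^*(1\otimes\,\cdot\,)W$ that breaks the gauge freedom your outline cannot resolve. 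Invariance of $(\alpha,\beta)$ then follows from the pentagon relation, the corepresentation identity $(\Delta\otimes\iota)(U^*)=U_{23}^*U_{13}^*$, and covariance; this gives $a\otimes 1\in M(C_0(\G)\otimes C_0(\G))$ and hence $a\in C^b(\G)$ (your final paragraph, which is correct as far as it goes). One last point: since the twisted pair does not satisfy $L(x)=\tilde\beta^*(x\otimes 1)\tilde\alpha$ by construction, the paper identifies the induced multiplier with $L_*$ indirectly, by noting both are represented by the same $a$ and using that $\hat\lambda$ is injective; some such argument would be needed in any repaired version of your proof as well.
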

\begin{proof}
Let $L=L_*^*\in\mc{CB}(L^\infty(\hat\G))$.  As $L$ is normal, we can find a Hilbert
space $K$, a normal $*$-representation $\pi:L^\infty(\hat\G)\rightarrow\mc B(K)$ and
maps $P,Q:L^2(\G)\rightarrow K$ with $\|P\| \|Q\|=\|L\|_{cb}$, and with
\[ L(x) = Q^*\pi(x)P \qquad (x\in L^\infty(\hat\G)). \]
This is, of course, the usual representation result for completely bounded maps,
but as $L$ is normal, we can assume that $\pi$ is normal: the details of this
change are worked out in the proof of \cite[Theorem~2.4]{HM}, for example.

Kustermans showed in \cite[Corollary~4.3]{kus1} that if $B$ is a C$^*$-algebra and
$\phi:L^1_\sharp(\G)\rightarrow M(B)$ is a non-degenerate $*$-homomorphism
(in the sense that $\{\phi(\omega)b : \omega\in L^1_\sharp(\G), b\in B\}$ is
linearly dense in $B$), then
there is a unitary $U\in M(C_0(\G)\otimes B)$ such that
\[ \phi(\omega) = (\omega\otimes\iota)(U) \quad (\omega\in L^1_\sharp(\G))
\qquad (\Delta\otimes\iota)(U) = U_{13}U_{23}. \]
The philosophy here is that $\phi$ extends to a $*$-homomorphism from the enveloping
C$^*$-algebra of $L^1_\sharp(\G)$, and so $\phi$ can be thought of as a representation
of the (universal) quantum group $\hat\G$, whereas $U$ is a corepresentation of $\G$;
Kustermans's result is that there is a correspondence between representations of $\hat\G$
and corepresentations of $\G$.

As we may assume that $\pi:L^\infty(\hat\G)\rightarrow\mc B(K)$ is unital,
and $L^1_\sharp(\G)$ is dense in $L^1(\G)$, it follows that
$\pi\lambda: L^1_\sharp(\G) \rightarrow \mc B(K) = M(\mc B_0(K))$ is non-degenerate,
and so we can find a representing unitary $U\in M(C_0(\G)\otimes\mc B_0(K))$.
Notice that $C_0(\G)\otimes\mc B_0(K)$ acts non-degenerately on $L^2(\G)
\otimes K$, and so we may identify $U$ with an operator in the von Neumann
algebra $L^\infty(\G)\vnten\mc B(K)$.

Let $\omega\in L^1_\sharp(\G)$ and let $\gamma,\delta\in K$.  Then
\begin{align*} \ip{U}{\omega\otimes\omega_{\gamma,\delta}}
&= \big( \gamma \big| (\omega\otimes\iota)(U) \delta \big)
= \big( \gamma \big| \pi(\lambda(\omega))\delta \big)
= \ip{\lambda(\omega)}{\pi_*(\omega_{\gamma,\delta})} \\
&= \ip{(\omega\otimes\iota)(W)}{\pi_*(\omega_{\gamma,\delta})}
= \ip{\pi((\omega\otimes\iota)(W))}{\omega_{\gamma,\delta}} \\
&= \ip{(\omega\otimes\iota)(\iota\otimes\pi)(W)}{\omega_{\gamma,\delta}}
= \ip{(\iota\otimes\pi)(W)}{\omega\otimes\omega_{\gamma,\delta}}.
\end{align*}
Here $\pi_*:\mc B(K)_* \rightarrow L^1(\hat\G)$ is the pre-adjoint, which exists
as $\pi$ is normal.  By density of $L^1_\sharp(\G)$ in $L^1(\G)$, we conclude
that $U = (\iota\otimes\pi)(W) \in L^\infty(\G)\vnten\mc B(K)$.  Indeed,
if we wished, we could define $U$ this way, and avoid using \cite{kus1}.

Also, we identify $M(C_0(\G)\otimes\mc B_0(K))$ with $\mc L(C_0(\G)\otimes K)$
and so $U$ induces $\mc U\in \mc L(C_0(\G)\otimes K)$.
Similarly, $W\in M(C_0(\G)\otimes\mc B_0(L^2(\G)))$ is associated to
$\mc W\in\mc L(C_0(\G)\otimes L^2(\G))$.
Fix a unit vector $\xi_0\in L^2(\G)$ and define
\begin{align*} \alpha &= \mc U^*(1\otimes P)\mc W (\iota\otimes\xi_0)^* \in
   \mc L(C_0(\G),C_0(\G)\otimes K), \\
\beta &= \mc U^*(1\otimes Q)\mc W (\iota\otimes\xi_0)^* \in
   \mc L(C_0(\G),C_0(\G)\otimes K). \end{align*}
Notice that $\|\alpha\|\|\beta\|\leq\|P\|\|Q\|=\|L\|_{cb}$.
By Proposition~\ref{prop:six}, $\alpha$ induces $\tilde\alpha\in
\mc B(L^2(\G),L^2(\G)\otimes K)$, and similarly $\beta$ induces $\tilde\beta$,
and in fact, we have that
\[ \tilde\alpha(\xi) = U^*(1\otimes P)W(\xi\otimes\xi_0), \quad
\tilde\beta(\xi) = U^*(1\otimes Q)W(\xi\otimes\xi_0)
\qquad (\xi\in L^2(\G)). \]

We next show that $(\alpha,\beta)$ is invariant, for which we need to consider
$(1\otimes\tilde\beta)^* W_{12}^*(1\otimes\tilde\alpha)W$.  Let $\xi,\eta\in
L^2(\G)\otimes L^2(\G)$, and we calculate that
\begin{align*}
\big( (1\otimes\tilde\beta)\xi &\big| W_{12}^*(1\otimes\tilde\alpha)W\eta \big) \\
&= \big( U_{23}^*(1\otimes 1\otimes Q)W_{23}(\xi\otimes\xi_0) \big|
   W_{12}^* U_{23}^*(1\otimes 1\otimes P)W_{23}W_{12}(\eta\otimes\xi_0) \big) \\
&= \big( U_{23}^*(1\otimes 1\otimes Q)W_{23}(\xi\otimes\xi_0) \big|
   W_{12}^* U_{23}^*W_{12}(1\otimes 1\otimes P)W_{13}W_{23}(\eta\otimes\xi_0) \big)
\end{align*}
Here we used the Pentagonal relation $W_{12} W_{13} W_{23} = W_{23} W_{12}$.
Now, if $X\in L^\infty(\G)\vnten\mc B(K)$, then $W_{12}^* X_{23} W_{12}
= (\Delta\otimes\iota)X$, so we find that $W_{12}^* U_{23}^* W_{12}
= (\Delta\otimes\iota)(U^*) = U_{23}^* U_{13}^*$ as $\Delta$ is a $*$-homomorphism.
Thus we get
\begin{align*}
\big( (1\otimes\tilde\beta)\xi &\big| W_{12}^*(1\otimes\tilde\alpha)W\eta \big) \\
&= \big( U_{23}^*(1\otimes 1\otimes Q)W_{23}(\xi\otimes\xi_0) \big|
   U_{23}^* U_{13}^*(1\otimes 1\otimes P)W_{13}W_{23}(\eta\otimes\xi_0) \big) \\
&= \big( W_{23}(\xi\otimes\xi_0) \big| (1\otimes 1\otimes Q^*)(\iota\otimes\pi)(W^*)_{13}
   (1\otimes 1\otimes P)W_{13}W_{23}(\eta\otimes\xi_0) \big) \\
&= \big( W_{23}(\xi\otimes\xi_0) \big| (\iota\otimes L)(W^*)_{13}
   W_{13}W_{23}(\eta\otimes\xi_0) \big)
\end{align*}
By Proposition~\ref{prop:four}, $(L\otimes\iota)(\hat W) = (1\otimes a)\hat W$.
Equivalently, we have $(\iota\otimes L)(W^*) = (a\otimes 1)W^*$, so we get
\begin{align*}
\big( (1\otimes\tilde\beta)\xi &\big| W_{12}^*(1\otimes\tilde\alpha)W\eta \big)
&= \big( W_{23}(\xi\otimes\xi_0) \big| (a\otimes 1\otimes 1)W^*_{13}
   W_{13}W_{23}(\eta\otimes\xi_0) \big)
= \big( \xi \big| (a\otimes 1)\eta \big). \end{align*}
Thus $(\alpha,\beta)$ is invariant, and induces $a$; in particular, we must
have that $a\in C^b(\G)$.  So by Proposition~\ref{prop:two},
if $L_0^*(x) = \tilde\beta^*(x\otimes 1)\tilde\alpha$ for $x\in L^\infty(\hat\G)$,
then $L_0^*$ is normal, maps into $L^\infty(\hat\G)$, and the pre-adjoint $L_0$
satisfies $\hat\lambda(L_0(\hat\omega)) = a \hat\lambda(\hat\omega)$ for $\hat\omega
\in L^1(\hat\G)$.  As $\hat\lambda$ injects, it follows that $L_0 = L_*$,
as required.
\end{proof}

\section{Approaches to right multipliers}\label{ontheright}

In the previous section, we studied represented completely bounded left
multipliers.  There are a number of ways to deal with right multipliers:
\begin{itemize}
\item Directly try to generalise the proof of Proposition~\ref{prop:two}.
We do this in Proposition~\ref{prop:seven}.  However, there are no \emph{a priori}
links with $\mc L(C_0(\G),C_0(\G)\otimes K)$.
\item Use the unitary antipode to convert right multipliers into left multipliers.
We do this in Lemma~\ref{lem:ten}, which gives formulas suggestive of those
in Proposition~\ref{prop:seven}.  We are also now in a position to use
\cite[Corollary~4.4]{jnr} to show that every completely bounded left
multiplier is represented.
\item Use the opposite algebra $L^1(\hat\G^\op)$, as a right multiplier of
$L^1(\hat\G)$ is a left multiplier of $L^1(\hat\G^\op)$.  However, by the
duality theory, this leads us to consider the algebra $C_0(\G')$.  We find a
way to move back to $C_0(\G)$ which gives exactly the formulas we were led
to consider by Lemma~\ref{lem:ten}.  Further, we find that a pair
$(\alpha,\beta)$ in $\mc L(C_0(\G),C_0(\G)\otimes K)$ is invariant if and only
if $(\beta,\alpha)$ is invariant.  This ``swap'' operation $(\alpha,\beta)\mapsto
(\beta,\alpha)$ induces a natural map $L_*\mapsto L_*^\dagger$ of left multipliers,
see Proposition~\ref{prop:thirteen}.
\item To make links with \cite{kus1}, we consider a ``coordinate'' approach
in Section~\ref{sec:coord} which leads to Theorem~\ref{thm:one} which, in
particular, allows us to show that the map
$(\alpha,\beta)\mapsto (\beta,\alpha)$ \emph{is} the antipode (in a technical sense).
\end{itemize}




\begin{proposition}\label{prop:seven}
Let $P,Q \in \mc B(L^2(\G), L^2(\G)\otimes K)$, and
define a map $R:L^\infty(\hat\G)\rightarrow\mc B(L^2(\G))$ by
$R(x) = P^*(x\otimes 1)Q$ for $x\in L^\infty(\hat\G)$.
Let $a\in C^b(\G)$.  The following are equivalent:
\begin{enumerate}
\item $R$ is the adjoint of a completely bounded right multiplier of
  $L^1(\hat\G)$ which is represented by $a$;
\item $(1\otimes Q^*) W_{12} (1\otimes P) W^* = a^*\otimes 1$.
\end{enumerate}
\end{proposition}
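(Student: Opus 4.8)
The plan is to mirror the proof of Proposition~\ref{prop:two} as closely as possible, using Propositions~\ref{prop:one} and~\ref{prop:four} as the bridge between the multiplier condition and a relation involving the multiplicative unitary. First I would note that $R(x)=P^*(x\otimes 1)Q$ is manifestly a normal, completely bounded map $L^\infty(\hat\G)\to\mc B(L^2(\G))$. The whole proposition then reduces to establishing the single equivalence
\[ \text{(2)} \iff (R\otimes\iota)(\hat W) = \hat W(1\otimes a), \]
because Proposition~\ref{prop:one} converts the right-hand relation into statement~(1) --- with the standing hypothesis $a\in C^b(\G)$ upgrading that proposition's conclusion $a\in L^\infty(\G)$ to the requirement that $R_*$ be \emph{represented} --- while the right-multiplier half of Proposition~\ref{prop:four} runs the same implication in reverse.

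The main computation is to evaluate $(R\otimes\iota)(\hat W)$. Since $R$ has exactly the same form $P^*(\,\cdot\otimes 1)Q$ as the operator $L$ in Proposition~\ref{prop:two}, I would carry out the identical sequence of manipulations --- substituting $\hat W=\sigma W^*\sigma$ and pushing the flips through the leg-$3$ operators --- to arrive at the direct analogue of the intermediate formula there, namely
\[ (R\otimes\iota)(\hat W) = \sigma\,(1\otimes P^*)\,W^*_{12}\,(1\otimes Q)\,\sigma, \]
with $P,Q$ now acting on the second leg. This is obtained from the left-hand formula simply by writing $P$ for $\tilde\beta$ and $Q$ for $\tilde\alpha$.

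It then remains to match this against condition~(2), and the point to emphasise is that~(2) is precisely the \emph{adjoint} of the relation we are after. Taking adjoints in $(1\otimes Q^*)W_{12}(1\otimes P)W^*=a^*\otimes 1$ gives $W(1\otimes P^*)W^*_{12}(1\otimes Q)=a\otimes 1$, that is, $(1\otimes P^*)W^*_{12}(1\otimes Q)=W^*(a\otimes 1)$; substituting into the displayed formula and using $\sigma(a\otimes 1)\sigma=1\otimes a$ yields $(R\otimes\iota)(\hat W)=\sigma W^*(a\otimes 1)\sigma=\hat W(1\otimes a)$, and the computation reverses. I expect the only real obstacle to be bookkeeping: keeping the leg-numbering consistent through the substitution $\hat W=\sigma W^*\sigma$, and tracking the adjoints carefully, so that the appearance of $a^*$ and $W$ in~(2) (rather than $a$ and $W^*$) emerges as the adjoint of the relation $(R\otimes\iota)(\hat W)=\hat W(1\otimes a)$. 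This asymmetry with the left case --- where the corresponding relation $(1\otimes\tilde\beta)^*W^*_{12}(1\otimes\tilde\alpha)W=a\otimes 1$ appears without any adjoint --- is the one spot that genuinely needs care.
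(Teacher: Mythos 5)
Your proposal is correct and follows essentially the same route as the paper's own proof: the identical computation of $(R\otimes\iota)(\hat W)=\sigma(1\otimes P^*)W_{12}^*(1\otimes Q)\sigma$ borrowed from Proposition~\ref{prop:two}, Proposition~\ref{prop:four} for (1)$\Rightarrow$(2), Proposition~\ref{prop:one} for the converse, and the adjoint-taking step to account for the appearance of $a^*$ and $W$ in condition~(2). The only difference is that you spell out the adjoint bookkeeping and the role of the hypothesis $a\in C^b(\G)$ more explicitly than the paper does, which is harmless.
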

\begin{proof}
As in the proof of Proposition~\ref{prop:two}, 
\[ (R\otimes\iota)(\hat W) = \sigma(1\otimes P^*)W_{12}^*(1\otimes Q)\sigma. \]
Thus, if (i) holds, then by Proposition~\ref{prop:four},
\[ \sigma(1\otimes P^*)W_{12}^*(1\otimes Q)\sigma
= \sigma W^*(a\otimes 1)\sigma. \]
Taking the adjoint gives (ii).  The converse follows from Proposition~\ref{prop:one}.
\end{proof}

Compared to Proposition~\ref{prop:two}, we have swapped $W$ with $W^*$.  As such,
it's not immediately clear how to relate $P$ and $Q$ to maps in
$\mc L(C_0(\G), C_0(\G)\otimes K)$.

Another approach to right multipliers is to use the unitary antipode
$\hat\kappa$ to convert the problem to studying left multipliers, which
follows, as $\hat\kappa_*$ is anti-multiplicative on $L^1(\hat\G)$.

\begin{lemma}\label{lem:ten}
Let $R_*:L^1(\hat\G)\rightarrow L^1(\hat\G)$ be a right multiplier, and defined
$L_* = \hat\kappa_* R_* \hat\kappa_*$, a left multiplier. Then:
\begin{enumerate}
\item $R_*$ is completely bounded if and only if $L_*$ is;
\item if $R_*$ is represented by $a\in C^b(\G)$, then $L_*$ is
represented by $\kappa(a) \in C^b(\G)$.
\end{enumerate}
\end{lemma}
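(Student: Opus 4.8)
The plan is to treat the two assertions separately, establishing (ii) by a direct algebraic computation and reserving the real work for (i). The crucial point for (i) is that the unitary antipode is typically \emph{not} completely bounded, so complete boundedness of $L_*$ cannot follow simply from complete boundedness of the single map $\hat\kappa_*$; the content of the statement is that the two copies of $\hat\kappa$ in the sandwich $\hat\kappa_* (\cdot) \hat\kappa_*$ cancel at the level of the $cb$-norm. I would present (ii) first, as it is routine, and then (i).

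For (ii), I would pass to the regular representation and repeatedly use the duality relation $\kappa\hat\lambda = \hat\lambda\hat\kappa_*$, the involutivity $\hat\kappa_*^2 = \iota$ (since $\hat\kappa^2 = \iota$), and the fact that $\kappa$ is an anti-homomorphism with $\kappa^2=\iota$. Concretely, for $\hat\omega\in L^1(\hat\G)$,
\[ \hat\lambda(L_*(\hat\omega)) = \hat\lambda(\hat\kappa_* R_* \hat\kappa_*(\hat\omega)) = \kappa\big( \hat\lambda(R_*(\hat\kappa_*(\hat\omega))) \big) = \kappa\big( \hat\lambda(\hat\kappa_*(\hat\omega))\, a \big) = \kappa(a)\, \kappa(\hat\lambda(\hat\kappa_*(\hat\omega))) = \kappa(a)\hat\lambda(\hat\omega), \]
where the third equality uses that $R_*$ is represented by $a$, the fourth the anti-multiplicativity of $\kappa$, and the last that $\kappa\hat\lambda\hat\kappa_* = \hat\lambda\hat\kappa_*^2 = \hat\lambda$. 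Since $\kappa$ restricts to an isometric $*$-anti-automorphism of $C_0(\G)$ that extends to $C^b(\G)=M(C_0(\G))$, we get $\kappa(a)\in C^b(\G)$, which is exactly the assertion (ii).

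For (i), the spatial implementation is the key tool. Dualizing the formula $\kappa(x)=\hat J x^*\hat J$, I would write $\hat\kappa(y) = J y^* J$ for $y\in L^\infty(\hat\G)$, where $J$ is the modular conjugation of $\G$, an antiunitary on $L^2(\G)$. Thus $\hat\kappa = \operatorname{Ad}_J \circ {*}$, where $\operatorname{Ad}_J(y)=JyJ$ and ${*}$ is the adjoint operation; both factors are conjugate-linear, and a short computation (using $(JyJ)^*=Jy^*J$, valid for antiunitary $J$) shows that they commute. Setting $R=R_*^*$ and $L=L_*^*=\hat\kappa R\hat\kappa$, this lets me factor
\[ L = \operatorname{Ad}_J \circ R^\sharp \circ \operatorname{Ad}_J, \qquad R^\sharp(y) := R(y^*)^*. \]
Now each amplification $\iota_n\otimes\operatorname{Ad}_J$ is conjugation by the antiunitary $1\otimes J$ on $\mathbb C^n\otimes L^2(\G)$, hence isometric, so $\operatorname{Ad}_J$ is completely isometric; and one has the standard identity $\|R^\sharp\|_{cb}=\|R\|_{cb}$, since $\iota_n\otimes R^\sharp$ is obtained from $\iota_n\otimes R$ by pre- and post-composing with the isometric matricial adjoint. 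Therefore $\|L\|_{cb}=\|R^\sharp\|_{cb}=\|R\|_{cb}$, and passing to pre-adjoints gives $\|L_*\|_{cb}=\|R_*\|_{cb}$. As $\hat\kappa_*^2=\iota$ the whole construction is symmetric ($R_*=\hat\kappa_* L_* \hat\kappa_*$), so $L_*$ is completely bounded precisely when $R_*$ is.

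The main obstacle is exactly the failure of $\hat\kappa$ to be completely bounded: for $\hat\G$ cocommutative the antipode acts as a transpose on the matrix blocks of $VN(G)$, with $cb$-norm the supremum of the block dimensions, which is infinite in general. So (i) is genuinely \emph{not} a formal consequence of $\hat\kappa_*$ being well behaved, and everything hinges on the cancellation in the sandwich. Once one records the decomposition $\hat\kappa=\operatorname{Ad}_J\circ{*}$ into commuting, $cb$-isometric pieces, the remainder is routine; I expect the only points needing care to be the identity $(JyJ)^*=Jy^*J$ and the verification that $\operatorname{Ad}_J$ maps $L^\infty(\hat\G)$ into itself (which holds because both $\hat\kappa$ and ${*}$ preserve $L^\infty(\hat\G)$).
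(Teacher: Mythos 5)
Your proof of (ii) is exactly the paper's argument: the same chain of equalities through $\hat\lambda$, using $\kappa\hat\lambda = \hat\lambda\hat\kappa_*$, anti-multiplicativity of $\kappa$, and $\hat\kappa_*^2=\iota$; the paper leaves the observation that $\kappa$ preserves $C^b(\G)$ implicit, which you spell out. For (i), however, you take a genuinely different and correct route. The paper also starts from the spatial formula $L(x) = J R(Jx^*J)^* J$, but then invokes the representation (dilation) theorem for normal completely bounded maps, with the representing $*$-representation taken to be an amplification (citing \cite{HM} and \cite{tak} for this refinement): writing $R(x) = V^*(1\otimes x)U$, it obtains $L(x) = JU^*(1\otimes J)(1\otimes x)(1\otimes J)VJ$, which exhibits $L$ in the standard form $B^*(1\otimes x)A$ with $A,B$ bounded and linear, hence completely bounded. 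You avoid any dilation: you factor $\hat\kappa = \operatorname{Ad}_J\circ{*}$ into two commuting, conjugate-linear pieces, write $L = \operatorname{Ad}_J\circ R^\sharp\circ\operatorname{Ad}_J$ with $R^\sharp(y)=R(y^*)^*$, and do cb-norm bookkeeping on entrywise amplifications, using that $[y_{ij}]\mapsto[Jy_{ij}J]$ is conjugation by an antiunitary on $\mathbb{C}^n\otimes L^2(\G)$ (so isometric at every matrix level) and that $\iota_n\otimes R^\sharp$ is the matricial adjoint composed with $\iota_n\otimes R$ composed with the matricial adjoint. Both verifications are sound; the only point needing the care you already flag is that for conjugate-linear maps one must use entrywise amplification and check these identities by hand, since the usual operator-space formalism is stated for linear maps. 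What each approach buys: yours is more elementary and self-contained (no appeal to the normal Stinespring-type dilation) and gives the exact equality $\|L_*\|_{cb}=\|R_*\|_{cb}$ transparently; the paper's dilation is not wasted effort, though, because the explicit sandwich formula for $L$ (equivalently $R$) produced by its proof is reused immediately after the lemma, where $R(x) = J\tilde\alpha^*(J\otimes J_K)(x\otimes 1)(J\otimes J_K)\tilde\beta J$ is what links right multipliers to invariant pairs $(\alpha,\beta)$ in $\mc L(C_0(\G),C_0(\G)\otimes K)$. Your side remark that $\hat\kappa$ itself is typically not completely bounded (transpose-like behaviour on matrix blocks) correctly identifies why the cancellation in the sandwich is the real content; the paper makes this point only implicitly.
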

\begin{proof}
For (i), suppose first that $R_*$ is completely bounded, so that
$R\in\mc{CB}(L^\infty(\hat\G))$.  For $x\in L^\infty(\hat\G)$,
we have that $\hat\kappa(x)=Jx^*J$, and so
\[ L(x) = \hat\kappa R \hat\kappa(x) = J R(Jx^*J)^* J
\qquad (x\in L^\infty(\hat\G)). \]
As $R$ is completely bounded, it admits a dilation-- compare with the proof
of Theorem~\ref{thm:three} above, but here we will assume that the normal
representation $\pi$ is an amplification (as we may, see
\cite[Chapter~IV, Theorem~5.5]{tak} for example).  So there exists a Hilbert space
$H$ and bounded maps $U,V:L^2(\G)\rightarrow H\otimes L^2(\G)$ such that
$R(x) = V^*(1\otimes x)U$ for $x\in L^\infty(\hat\G)$.  Thus
\[ L(x) = JU^*(1\otimes J)(1\otimes x)(1\otimes J)VJ \qquad (x\in L^\infty(\hat\G)), \]
showing that $L$, and hence also $L_*$, are completely bounded.  The
converse follows similarly.

For (ii), let $\hat\omega\in L^1(\hat\G)$, so that
\[ \hat\lambda(L_*(\hat\omega)) = \kappa\hat\lambda\big( R_*\hat\kappa_*(\hat\omega) \big)
= \kappa\big( \hat\lambda(\hat\kappa_*(\hat\omega)) a \big)
= \kappa(a) \kappa\hat\lambda\big( \hat\kappa_*(\hat\omega) \big)
= \kappa(a) \hat\lambda(\hat\omega), \]
using that $\kappa\hat\lambda = \hat\lambda\hat\kappa_*$.
Hence $L_*$ is represented by $\kappa(a)$, as required.
\end{proof}

Thus, if $R_*$ is a completely bounded right multiplier which is
represented, then $L_*=\hat\kappa_* R_* \hat\kappa_*$ is a represented left
multiplier, and hence admits an invariant pair $(\alpha,\beta)$ in
$\mc L(C_0(\G),C_0(\G)\otimes K)$.
Indeed, for $x\in L^\infty(\hat\G)$, we have that
$L(x) = \tilde\beta^* (x\otimes 1) \tilde\alpha$, so that
\begin{align*} R(x) &= \hat\kappa L \hat\kappa(x)
= J L(\hat\kappa(x))^* J
= J \tilde\alpha^* (\hat\kappa(x)^*\otimes 1) \tilde\beta J
= J \tilde\alpha^* (JxJ\otimes 1) \tilde\beta J \\
&= J \tilde\alpha^*(J\otimes J_K) (x\otimes 1) (J\otimes J_K)\tilde\beta J.
\end{align*}
Here $J_K$ is some involution on $K$: a conjugate linear isometry with
$J_K^2=1$ (we can always find such a map: just write $K$ as $\ell^2(I)$ for some
index set $I$).  This gives one way to link the maps $P$ and $Q$ appearing in
Proposition~\ref{prop:seven} above to maps $\alpha,\beta$ in
$\mc L(C_0(\G),C_0(\G)\otimes K)$.  Furthermore, the map $(J\otimes J_K)\tilde\alpha J$
will appear (in slightly different context) below in Lemma~\ref{lemma:sixnew}.

The following is an improvement upon \cite[Corollary~4.4]{jnr},
in that we can show that every left or right multiplier is represented by
an element of $C^b(\G)$, and not just $L^\infty(\G)$.

\begin{proposition}\label{prop:ten}
Any left or right completely bounded multiplier of $L^1(\hat\G)$ is
represented by an element of $C^b(\G)$.
\end{proposition}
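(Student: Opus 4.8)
The plan is to assemble the two substantial ingredients already at hand — Theorem~\ref{thm:ten}, which produces an $L^\infty(\G')$-representative for any completely bounded right multiplier, and Theorem~\ref{thm:three}, which upgrades an $L^\infty(\G)$-representative of a completely bounded \emph{left} multiplier to one in $C^b(\G)$ — and to bridge them using the isomorphism $\Phi$ and the unitary antipode $\hat\kappa_*$. By Lemma~\ref{lem:ten} the map $L_*\mapsto\hat\kappa_* L_*\hat\kappa_*$ is an involution interchanging completely bounded left and right multipliers, and since $\kappa$ restricts to an anti-$*$-automorphism of $C^b(\G)$ with $\kappa^2=\iota$, it preserves the property of being represented by an element of $C^b(\G)$. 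Hence it is enough to treat, say, right multipliers.

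So first I would take $R_*\in M^r_{cb}(L^1(\hat\G))$ and feed it to Theorem~\ref{thm:ten}, obtaining $x\in L^\infty(\G')$ with $\hat\rho(\hat\omega)x=\hat\rho(R_*(\hat\omega))$ for all $\hat\omega$. The key observation is that $\hat\rho=\Phi\hat\lambda$, where $\Phi(y)=\hat J JyJ\hat J$ is conjugation by the unitary $\hat J J$, hence a $*$-automorphism of $\mc B(L^2(\G))$ restricting to a $*$-isomorphism $L^\infty(\G')\to L^\infty(\G)$. Applying $\Phi^{-1}$ to this identity, and using that $\Phi^{-1}$ is multiplicative on all of $\mc B(L^2(\G))$, yields $\hat\lambda(\hat\omega)a=\hat\lambda(R_*(\hat\omega))$ with $a:=\Phi^{-1}(x)$. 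Since $\Phi$ is a $*$-automorphism carrying $L^\infty(\G')$ onto $L^\infty(\G)$, it respects commutants and so carries $L^\infty(\G)=L^\infty(\G')'$ onto $L^\infty(\G')$; therefore $a=\Phi^{-1}(x)\in\Phi^{-1}(L^\infty(\G'))=L^\infty(\G)$. Thus $R_*$ is represented, in the sense of our Definition, by an element of $L^\infty(\G)$ — a priori not of $C^b(\G)$.

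To remove this defect I would pass back through the antipode: set $L_*=\hat\kappa_* R_*\hat\kappa_*$. The computation in the proof of Lemma~\ref{lem:ten}(ii) uses only that $\kappa$ is an anti-$*$-homomorphism and that $\kappa\hat\lambda=\hat\lambda\hat\kappa_*$, so it applies verbatim to our $a\in L^\infty(\G)$ and gives $\kappa(a)\hat\lambda(\hat\omega)=\hat\lambda(L_*(\hat\omega))$ with $\kappa(a)\in L^\infty(\G)$. Now $L_*$ is a completely bounded left multiplier represented by $\kappa(a)\in L^\infty(\G)$, so Theorem~\ref{thm:three} applies and forces $\kappa(a)\in C^b(\G)$. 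As $\kappa$ is an involutive anti-automorphism of $L^\infty(\G)$ preserving $C_0(\G)$, hence $C^b(\G)=M(C_0(\G))$, we recover $a=\kappa(\kappa(a))\in C^b(\G)$. This shows every completely bounded right multiplier, and by the opening reduction every left multiplier, is represented by an element of $C^b(\G)$.

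The proof is essentially bookkeeping: all the analytic content sits in Theorems~\ref{thm:ten} and~\ref{thm:three}. The one point requiring care — the step I would flag as the main thing to get right — is checking that the two transport operations cost no regularity: that $\Phi^{-1}$ maps $L^\infty(\G')$ back into $L^\infty(\G)$ (so the $\hat\rho$-to-$\hat\lambda$ conversion keeps the representative in $L^\infty(\G)$), and that $\kappa$ maps $C^b(\G)$ onto $C^b(\G)$ (so the $C^b$-conclusion of Theorem~\ref{thm:three} survives transport back across the antipode). Both follow from $\Phi$ and $\kappa$ being ($*$- or anti-$*$-)automorphisms respecting, respectively, the commutant relation $L^\infty(\G')=L^\infty(\G)'$ and the reduced C$^*$-algebra $C_0(\G)$.
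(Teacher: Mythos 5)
Your proposal is correct and follows essentially the same route as the paper: apply Theorem~\ref{thm:ten} to a right multiplier, transport the representative from $\hat\rho$ to $\hat\lambda$ via $\Phi^{-1}$ (i.e.\ $b = J\hat J x \hat J J$), pass through $L_* = \hat\kappa_* R_* \hat\kappa_*$ so that Theorem~\ref{thm:three} forces $\kappa(b)\in C^b(\G)$, and conclude $b\in C^b(\G)$, with left multipliers handled by the same antipode trick. The only difference is cosmetic: you make explicit (via the commutant argument) the fact that $\Phi^{-1}(x)\in L^\infty(\G)$ and that $\kappa$ preserves $C^b(\G)$, points the paper leaves implicit.
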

\begin{proof}
Let $R_*$ be a completely bounded right multiplier of $L^1(\hat\G)$, and choose
$x\in L^\infty(\G')$ by Theorem~\ref{thm:ten} (that is, using \cite{jnr}) so that
$\hat\rho(\hat\omega) x = \hat\rho(R_*(\hat\omega))$ for $\hat\omega\in L^1(\hat\G)$.
By the definition of $\hat\rho$, we see that $\hat\lambda(\hat\omega) J\hat J x
\hat JJ = \hat\lambda(R_*(\hat\omega))$ for each $\hat\omega\in L^1(\hat\G)$.
Set $b = J\hat J x \hat JJ$, and let $L_* = \hat\kappa_* R_* \hat\kappa_*$ so by
(the proof of) Lemma~\ref{lem:ten}, $L_*$ is a completely bounded left multiplier
with $\kappa(b) \hat\lambda(\hat\omega) = \hat\lambda(L_*(\hat\omega))$ for each
$\hat\omega\in L^1(\hat\G)$.  From
Theorem~\ref{thm:three}, it follows that $\kappa(b)\in C^b(\G)$, and so also
$b\in C^b(\G)$.  Similarly, using the unitary antipode, a similar argument
gives the result for completely bounded left multipliers.
\end{proof}

\subsection{Using the opposite algebra}\label{usingopalg}

Recall the definition of the opposite quantum group $\hat\G^\op$ from
Section~\ref{sec:lcqg}.
Given a completely bounded right multiplier $R_*$ of $L^1(\hat\G)$, write
$R_*^\op$ for $R_*$ considered as a map on $L^1(\hat\G^\op)$, so that
$R_*^\op$ is a completely bounded left multiplier.

We now know that $R_*^\op$ is represented, say by $JbJ \in C^b(\G') = J C^b(\G) J$. 
By Theorem~\ref{thm:three}, we can find $\alpha', \beta'
\in \mc L(C_0(\G'), C_0(\G')\otimes K)$ such that the pair $(\alpha',\beta')$ is invariant
with respect to $JbJ$, that is, $(1\otimes\beta')^*(\Delta'*\alpha') = JbJ\otimes 1$,
and such that $R(x) = \tilde\beta'{}^*(x\otimes 1)\tilde\alpha'$ for
$x\in L^\infty(\G)$.

Recall the isomorphism $\Phi:C_0(\G')\rightarrow C_0(\G); a\mapsto \hat J J a J \hat J$.
Given $\alpha'\in\mc L(C_0(\G'),C_0(\G')\otimes K)$, we notice that
$(\Phi\otimes\iota)\alpha'\Phi^{-1}$ is in $\mc L(C_0(\G),C_0(\G)\otimes K)$.
However, this isomorphism does not interact well with forming $\Delta*\alpha$
or $\tilde\alpha$ (for example, we get nothing like Lemma~\ref{lemma:sixa} below).
Rather, we study another bijection between
$\mc L(C_0(\G),C_0(\G)\otimes K)$ and $\mc L(C_0(\G'),C_0(\G')\otimes K)$
which comes at the cost of choosing an involution $J_K$ on $K$, which the
bijection will depend upon.  However, the results below show that, as far as
multipliers are concerned, there is no dependence upon $J_K$.  From now on, fix some
involution $J_K$ on $K$.

\begin{lemma}\label{lemma:sixnew}
Define an anti-linear isomorphism $\theta:C_0(\G')\rightarrow C_0(\G);
a\mapsto J a J$.
For $\alpha'\in\mc L(C_0(\G'),C_0(\G')\otimes K)$, the map
$\alpha = (\theta\otimes J_K)\alpha'\theta^{-1}$ is in $\mc L(C_0(\G),C_0(\G)\otimes K)$.
Furthermore, we have that $\tilde\alpha = (J\otimes J_K) \tilde\alpha' J$.
\end{lemma}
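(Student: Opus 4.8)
The plan is to prove the two assertions separately, both flowing from how the conjugate-linear maps $\theta$ and $\Theta := \theta\otimes J_K$ act on the module inner products. Note first that, since $J$ is an anti-unitary involution, $\theta(a)=JaJ$ is a conjugate-linear, isometric, multiplicative bijection $C_0(\G')\rightarrow C_0(\G)$ which moreover preserves adjoints, as $(JaJ)^*=Ja^*J$; thus $\theta$ is a conjugate-linear $*$-isomorphism, with $\theta^{-1}(b)=JbJ$. As $J_K$ is a conjugate-linear involution on $K$, the composite $\Theta=\theta\otimes J_K$ is \emph{linear} on the algebraic tensor product $C_0(\G')\odot K$.

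For adjointability, I would first establish the key identity
\[ \big(\Theta x\big|\Theta y\big) = \theta\big((x|y)\big) \qquad (x,y\in C_0(\G')\otimes K). \]
It suffices to check this on elementary tensors $x=a\otimes\xi$, $y=c\otimes\eta$, where the left side is $(JaJ)^*(JcJ)\,(J_K\xi|J_K\eta)=Ja^*cJ\,\overline{(\xi|\eta)}$, and the right side is $\theta(a^*c(\xi|\eta))=\overline{(\xi|\eta)}\,Ja^*cJ$, using $(J_K\xi|J_K\eta)=\overline{(\xi|\eta)}$ and the conjugate-linearity of $\theta$. Taking $x=y$ and using that $\theta$ is isometric shows $\Theta$ is isometric, hence extends to the completion. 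I would then verify that $\alpha=\Theta\alpha'\theta^{-1}$ is adjointable with adjoint $\alpha^*=\theta\,\alpha'{}^*\,(\theta^{-1}\otimes J_K)$: for $b\in C_0(\G)$ and $w\in C_0(\G)\otimes K$, set $b'=\theta^{-1}b$ and $w'=(\theta^{-1}\otimes J_K)w$, so that $\alpha(b)=\Theta\alpha'(b')$ and $w=\Theta w'$; the identity above together with the adjointability of $\alpha'$ gives
\[ \big(\alpha(b)\big|w\big) = \theta\big((\alpha'(b')|w')\big) = \theta\big(b'{}^*\,\alpha'{}^*(w')\big) = b^*\,\theta(\alpha'{}^*(w')) = \big(b\big|\alpha^*(w)\big), \]
where I used that $\theta$ is multiplicative and $*$-preserving, that $\theta(b')=b$, and that $\alpha^*(w)=\theta(\alpha'{}^*(w'))$. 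Hence $\alpha\in\mc L(C_0(\G),C_0(\G)\otimes K)$.

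For the formula $\tilde\alpha=(J\otimes J_K)\tilde\alpha'J$, I would use the characterisation in part (iii) of Proposition~\ref{prop:six}, namely $\tilde\alpha(a(\xi))=e_\xi\alpha(a)$ for $a\in C_0(\G)$ and $\xi\in L^2(\G)$; the same applies to $\alpha'$ once one notes that $C_0(\G')=JC_0(\G)J$ also acts faithfully and non-degenerately on $L^2(\G)$, so $\tilde\alpha'$ is defined by the identical formula. Since every vector of $L^2(\G)$ is of the form $a(\xi)$ by Cohen factorisation, it is enough to compare both sides there. Writing $\alpha'(JaJ)$ as a norm-limit of finite sums $\sum_n c_n\otimes\eta_n$ and using boundedness of all maps involved, the left side is $e_\xi\alpha(a)=e_\xi\Theta(\alpha'(JaJ))=\sum_n (Jc_nJ)(\xi)\otimes J_K\eta_n$. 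For the right side, $J(a(\xi))=(JaJ)(J\xi)$ is of the form $a'(\xi')$ with $a'=JaJ\in C_0(\G')$, so Proposition~\ref{prop:six}(iii) applied to $\alpha'$ gives $\tilde\alpha'(J(a(\xi)))=e_{J\xi}\alpha'(JaJ)=\sum_n c_n(J\xi)\otimes\eta_n$, and applying $J\otimes J_K$ yields $\sum_n J(c_n(J\xi))\otimes J_K\eta_n$. The two expressions coincide because $(Jc_nJ)(\xi)=J(c_n(J\xi))$, completing the proof.

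The main obstacle is the careful bookkeeping of the anti-linearity: one must track the conjugations introduced by $J$ and $J_K$ at every step, check that their combined effect makes $\Theta$ linear and makes the candidate adjoint correct, and see that the inner-product identity is precisely what balances the conjugate-linearity of $\theta$ against the conjugate-symmetry of the Hilbert space inner product under $J_K$. A minor but necessary point is confirming that $C_0(\G')$ satisfies the faithfulness and non-degeneracy hypotheses of Proposition~\ref{prop:six}, so that $\tilde\alpha'$ really is given by the same formula as $\tilde\alpha$.
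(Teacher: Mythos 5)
Your proof follows the paper's argument almost step for step: the key identity $\big((\theta\otimes J_K)\tau\big|(\theta\otimes J_K)\sigma\big)=J(\tau|\sigma)J=\theta\big((\tau|\sigma)\big)$, the adjoint formula $\alpha^*=\theta\,\alpha'^*\,(\theta^{-1}\otimes J_K)$, and the intertwining $e_\xi(\theta\otimes J_K)=(J\otimes J_K)e_{J\xi}$ combined with Proposition~\ref{prop:six}(iii) are exactly the paper's three steps. (The paper establishes boundedness via $(\alpha(a)|\alpha(b))=a^*J\alpha'^*\alpha'Jb$ and asserts the adjoint formula ``similarly''; you verify the adjoint relation directly, which is marginally cleaner, since the existence of an adjoint automatically makes $\alpha$ a bounded linear module map.)

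One assertion is wrong, however, and you repeat it in your closing paragraph: $\Theta=\theta\otimes J_K$ is \emph{conjugate}-linear, not linear. For $\lambda\in\mathbb C$ one has $\Theta\big(\lambda(a\otimes\xi)\big)=\Theta\big((\lambda a)\otimes\xi\big)=\theta(\lambda a)\otimes J_K\xi=\bar\lambda\,\Theta(a\otimes\xi)$: a scalar in a tensor product sits in only one leg, so it is conjugated once, not twice. This is not a harmless slip of terminology; if $\Theta$ really were linear, then $\alpha=\Theta\alpha'\theta^{-1}$ would be conjugate-linear (linear composed with the conjugate-linear $\theta^{-1}$), and the conclusion $\alpha\in\mc L(C_0(\G),C_0(\G)\otimes K)$ would be false. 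Fortunately nothing else in your argument uses the claimed linearity: the key identity and the intertwining with $e_\xi$ are checked on elementary tensors and extend by additivity and continuity (both sides having the same linearity/conjugate-linearity in each variable), and the verified adjoint relation itself forces $\alpha$ to be linear. So the proof stands once this bookkeeping point is corrected.
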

\begin{proof}
First check that for $\tau,\sigma\in C_0(\G)\otimes K$, we have that
\[ \big( (\theta\otimes J_K)\tau \big| (\theta\otimes J_K)\sigma \big)
= J (\tau|\sigma) J. \]
Then, for $a,b\in C_0(\G)$,
\[ \big( \alpha(a) \big| \alpha(b) \big) =
J\big( \alpha'(JaJ) \big| \alpha'(JbJ) \big) J
= a^* J {\alpha'}^* \alpha' J b, \]
where here ${\alpha'}^* \alpha'\in C^b(\G')$, and so $J {\alpha'}^* \alpha' J
\in C^b(\G)$.  It follows that $\alpha$ is well-defined and bounded.  We can
similarly show that $\alpha^* = \theta {\alpha'}^* (\theta^{-1}\otimes J_K)$, so
in particular, $\alpha$ is adjointable.

Let $a\in C_0(\G), \xi\in L^2(\G)$ and $\eta\in K$.  With reference to
Proposition~\ref{prop:six}, we have that
$e_\xi(\theta\otimes J_K)(a\otimes\eta) = JaJ\xi \otimes J_k(\eta)
= (J\otimes J_k) e_{J\xi} (a\otimes\eta)$.  It follows that
\[ \tilde\alpha(a\xi) = e_\xi (\theta\otimes J_K)\alpha' \theta^{-1}(a)
= (J\otimes J_K) e_{J\xi} \alpha' (JaJ) = (J\otimes J_K) \tilde\alpha' (Ja\xi), \]
and so $\tilde\alpha = (J\otimes J_K) \tilde\alpha' J$.
\end{proof}


\begin{lemma}\label{lemma:sixa}
Let $\alpha',\beta'\in \mc L(C_0(\G'),C_0(\G')\otimes K)$ and
$\alpha,\beta \in \mc L(C_0(\G),C_0(\G)\otimes K)$ be associated as in the previous
lemma.  Then the pair $(\alpha',\beta')$ is invariant with respect to $JbJ\in C^b(\G')$
if and only if the pair $(\alpha,\beta)$ is invariant with respect to $b\in C^b(\G)$.
\end{lemma}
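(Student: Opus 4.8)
The plan is to work with the \emph{spatial} form of invariance given by Proposition~\ref{prop:five}, which says that a pair is invariant exactly when the operator
\[ X := (1\otimes\tilde\beta)^* W_{12}^*(1\otimes\tilde\alpha)W \]
lies in $C^b(\G)\otimes 1$, its value then being the representing element. Applying the very same criterion to $\G'$, whose multiplicative unitary is $W' = (J\otimes J)W(J\otimes J)$, invariance of $(\alpha',\beta')$ with respect to $JbJ$ is precisely the assertion that
\[ X' := (1\otimes\tilde\beta')^* (W')_{12}^*(1\otimes\tilde\alpha')W' = JbJ\otimes 1. \]
So the whole lemma collapses to the single operator identity $X = (J\otimes J)X'(J\otimes J)$: since conjugating $JbJ\otimes 1$ by $J\otimes J$ returns $b\otimes 1$ (and conversely), this identity yields both implications at once, together with the correct matching $JbJ\leftrightarrow b$ of the representing elements.

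To prove $X = (J\otimes J)X'(J\otimes J)$, I would introduce the two antiunitary involutions $\mc L = J\otimes J$ on $L^2(\G)\otimes L^2(\G)$ and $V = J\otimes J\otimes J_K$ on $L^2(\G)\otimes L^2(\G)\otimes K$. From the Hilbert space formula $\tilde\alpha = (J\otimes J_K)\tilde\alpha' J$ of Lemma~\ref{lemma:sixnew}, evaluation on a simple tensor gives
\[ 1\otimes\tilde\alpha = V(1\otimes\tilde\alpha')\mc L, \qquad
(1\otimes\tilde\beta)^* = \mc L(1\otimes\tilde\beta')^* V, \]
the second obtained from the first by taking adjoints (both $V$ and $\mc L$ are self-adjoint involutions), equivalently from the adjoint formula $\alpha^* = \theta(\alpha')^*(\theta^{-1}\otimes J_K)$ already recorded in Lemma~\ref{lemma:sixnew}. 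For the unitaries, $W = \mc L W'\mc L$ is just the defining relation for $W'$, while $W_{12}^* = V(W')_{12}^* V$: indeed $W_{12}$ acts as the identity on the third leg, so conjugation by $V$ touches only its first two legs, producing $(J\otimes J)W(J\otimes J)\otimes 1 = (W')_{12}$. Substituting these four expressions into $X$, every adjacent pair $VV$ and $\mc L\mc L$ collapses to the identity, and the product telescopes to $\mc L\big[(1\otimes\tilde\beta')^*(W')_{12}^*(1\otimes\tilde\alpha')W'\big]\mc L = \mc L X'\mc L$, which is exactly the desired identity.

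The one genuinely delicate point is the antilinear bookkeeping. Because $J$ and $J_K$ are conjugate-linear, an operator that is the identity on some legs and $J_K$ on another is \emph{not} well defined; this is precisely why the conjugations must be packaged as the all-antilinear involutions $V$ and $\mc L$ rather than as naive partial-leg maps, and why the verification of $1\otimes\tilde\alpha = V(1\otimes\tilde\alpha')\mc L$ and of $W_{12}^* = V(W')_{12}^* V$ must be done by hand. I expect these two identities (and the passage to adjoints) to be the only steps requiring care; the remaining telescoping is purely formal. A pleasant by-product is that $J_K$ enters only through $V$, and since every $V$ cancels, the final operator $X$ is linear and retains no trace of $J_K$, re-proving \emph{en passant} the remark preceding the lemma that the dependence on the chosen involution $J_K$ evaporates at the level of multipliers.
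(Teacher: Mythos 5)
Your proposal is correct and is essentially the paper's own argument: the paper likewise applies Proposition~\ref{prop:five} to both $\G$ and $\G'$, combines Lemma~\ref{lemma:sixnew} with $W'=(J\otimes J)W(J\otimes J)$ to obtain $(\Delta'*\alpha')\tilde{\ } = (J\otimes J\otimes J_K)\,(\Delta*\alpha)\tilde{\ }\,(J\otimes J)$, and then conjugates the invariance identity by $J\otimes J$ --- which is exactly your relation $X=(J\otimes J)X'(J\otimes J)$ written in mirror form, with the same packaging of the antilinear maps into full conjugations so that $J_K$ cancels.
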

\begin{proof}
We have that
\begin{align*} (\Delta'*\alpha)\tilde{} &= (W')^*_{12} (1\otimes\tilde\alpha')W' \\
&= (J\otimes J\otimes J_K) W^*_{12} (J\otimes J\otimes J_K) (J\otimes J\otimes J_K)
  (1\otimes\tilde\alpha)(J\otimes J)(J\otimes J) W (J\otimes J) \\
&= (J\otimes J\otimes J_K) W^*_{12} (1\otimes\tilde\alpha) W (J\otimes J)
  = (J\otimes J\otimes J_K) (\Delta*\alpha)\tilde{\ } (J\otimes J).
\end{align*}
Hence $(\alpha',\beta')$ being invariant with respect to $JbJ$ is equivalent to
\begin{align*} JbJ\otimes 1 &= (1\otimes\tilde\beta'{}^*)
  (J\otimes J\otimes J_K) (\Delta*\alpha)\tilde{\ } (J\otimes J) \\
&= (J\otimes J) (1\otimes\tilde\beta^*) (\Delta*\alpha)\tilde{\ } (J\otimes J).
\end{align*}
By applying $J\otimes J$ to both sides, this is equivalent to
$(\alpha,\beta)$ being invariant with respect to $b$, as claimed.
\end{proof}

For $x\in L^\infty(\G)$, we have that $R(x) = \tilde\beta'{}^*(x\otimes 1)\tilde\alpha'$.
By using Lemma~\ref{lemma:sixnew}, we see that
\[ R(x) = J \tilde\beta^* (JxJ\otimes 1) \tilde\alpha J
= \hat\kappa\big( \tilde\alpha^* (Jx^*J\otimes 1) \tilde\beta \big)
= \hat\kappa\big( \tilde\alpha^* (\hat\kappa(x)\otimes 1) \tilde\beta \big)
\qquad (x\in L^\infty(\hat\G)). \]
So to make links with Lemma~\ref{lem:ten}, we are led to look at the pair $(\beta,\alpha)$.

\begin{proposition}\label{prop:eight}
Let $(\alpha,\beta)$ be an invariant pair in $\mc L(C_0(\G),C_0(\G)\otimes K)$,
and let $(\alpha',\beta')$ be the associated invariant pair in
$\mc L(C_0(\G'),C_0(\G')\otimes K)$.  Let $R_*^\op$ be the left multiplier of
$L^1(\hat\G^\op)$ induced by $(\alpha',\beta')$, and let $R_*$ (a right multiplier
of $L^1(\hat\G)$) be represented by
$a\in C^b(\G)$.  Then $(\beta,\alpha)$ is invariant with respect to $\kappa(a)$.
\end{proposition}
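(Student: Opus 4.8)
The plan is to reduce the claim to Proposition~\ref{prop:two} applied to the \emph{swapped} pair $(\beta,\alpha)$, with the unitary antipode $\hat\kappa$ doing all of the book-keeping. The key input is the computation carried out immediately before the statement, which (via Lemma~\ref{lemma:sixnew}, relating $(\alpha,\beta)$ to the associated pair $(\alpha',\beta')$, together with the formula $R(x)=\tilde\beta'{}^*(x\otimes 1)\tilde\alpha'$ produced by $R_*^\op$) gives
\[ R(x) = \hat\kappa\big( \tilde\alpha^*(\hat\kappa(x)\otimes 1)\tilde\beta \big) \qquad (x\in L^\infty(\hat\G)), \]
where $R=(R_*)^*$. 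This is exactly the shape that Lemma~\ref{lem:ten} is designed to exploit, and the invariance of $(\alpha,\beta)$ enters only through this preliminary formula.

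First I would set $L(x)=\tilde\alpha^*(x\otimes 1)\tilde\beta$ for $x\in L^\infty(\hat\G)$; this is precisely the normal completely bounded map that Proposition~\ref{prop:two} attaches to the pair $(\beta,\alpha)$. The displayed identity then says $R=\hat\kappa L\hat\kappa$, and since $\hat\kappa^2=\iota$ I may rearrange this to $L=\hat\kappa R\hat\kappa$. Taking pre-adjoints (every map here is normal) yields $L_*=\hat\kappa_* R_*\hat\kappa_*$, which is exactly the left multiplier built from $R_*$ in Lemma~\ref{lem:ten}. Since $R_*$ is a completely bounded right multiplier represented by $a\in C^b(\G)$, part~(i) of that lemma guarantees $L_*$ is completely bounded and part~(ii) identifies its representing element as $\kappa(a)$, which again lies in $C^b(\G)$ since $\kappa$ preserves $C^b(\G)$.

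To finish, I would feed this back into the equivalence in Proposition~\ref{prop:two}, now read for the pair $(\beta,\alpha)$ and the element $\kappa(a)\in C^b(\G)$: I have verified condition~(1), that $L$ is the adjoint of a completely bounded left multiplier of $L^1(\hat\G)$ represented by $\kappa(a)$, so condition~(2) must hold, i.e.\ $(\beta,\alpha)$ is invariant with $(1\otimes\alpha)^*(\Delta*\beta)=\kappa(a)\otimes 1$. The main obstacle I anticipate is purely one of book-keeping: with the anti-multiplicative antipode interleaved among adjoints and pre-adjoints, and the conjugations $J,J_K$ buried in Lemma~\ref{lemma:sixnew}, one must check carefully that the representing element emerges as $\kappa(a)$ itself and not as $\kappa(a)^*$ or some conjugate, and that $L$ is literally the adjoint $(L_*)^*$ rather than a flipped version of it. Beyond this, the argument is a direct concatenation of the cited results.
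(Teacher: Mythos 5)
Your proof is correct and is essentially the paper's own argument: both hinge on the identity $R=\hat\kappa L\hat\kappa$ with $L(x)=\tilde\alpha^*(x\otimes 1)\tilde\beta$ coming from Lemma~\ref{lemma:sixnew}, invoke Lemma~\ref{lem:ten} to see that $L_*=\hat\kappa_* R_*\hat\kappa_*$ is a completely bounded left multiplier represented by $\kappa(a)$, and then conclude by applying Proposition~\ref{prop:two} to the pair $(\beta,\alpha)$. The only difference is presentational: the paper defines $L_*=\hat\kappa_* R_*\hat\kappa_*$ first and then derives the formula $L(x)=\tilde\alpha^*(x\otimes 1)\tilde\beta$, whereas you define $L$ by that formula and then identify its pre-adjoint with $\hat\kappa_* R_*\hat\kappa_*$.
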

\begin{proof}
Form $R_*^\op$ using $(\alpha',\beta')$, so that $R_*$ is a completely bounded
right multiplier of $L^1(\G)$.  By Proposition~\ref{prop:ten}, $R_*$ is
represented, say by $a\in C^b(\G)$.  Let $L_* = \hat\kappa_* R_* \hat\kappa_*$,
so by Lemma~\ref{lem:ten}, $L_*$ is a left multiplier represented by $\kappa(a)$.
For $x\in L^\infty(\hat\G)$, we have that $\hat\kappa L\hat\kappa(x) = R(x)
= \hat\kappa\big( \tilde\alpha^*( \hat\kappa(x)\otimes1 ) \tilde\beta \big)$,
using the above calculation.  Hence $L(x) = \tilde\alpha^*(x\otimes 1)\tilde\beta$.
By Proposition~\ref{prop:two}, it follows that $(\beta,\alpha)$ is invariant
with respect to $\kappa(a)$.
\end{proof}

We now show what happens with the induced left multipliers of $L^1(\hat\G)$,
without reference to $L^1(\hat\G^\op$).  We first need a lemma:
remember that $\hat\lambda^\op$ is the homomorphism $L^1(\hat\G^\op)\rightarrow
C_0(\G')$.

\begin{lemma}\label{lem:eight}
For $\hat\omega\in L^1(\hat\G)$, we have that $\hat\lambda^\op(\hat\omega)
= J\hat J\hat\lambda(\hat\omega^*)^* \hat JJ$.
\end{lemma}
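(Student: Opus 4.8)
The plan is to express both sides of the claimed identity as slices of the fundamental unitary $W$ on its \emph{second} (dual) leg, and then to match them using the antipodal invariance $(\kappa\otimes\hat\kappa)(W)=W$.

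First I would rewrite the right-hand side. Since $\hat W=\sigma W^*\sigma$ (recalled in Section~\ref{sec:lcqg}), the defining formula for the regular representation gives $\hat\lambda(\hat\nu)=(\hat\nu\otimes\iota)(\hat W)=(\iota\otimes\hat\nu)(W^*)$ for $\hat\nu\in L^1(\hat\G)$. Combining this with the elementary identity $(\iota\otimes\mu^*)(X)=\big[(\iota\otimes\mu)(X^*)\big]^*$ (valid for any normal $\mu$, where $\mu^*$ is the adjoint functional, checked on vector functionals and extended by normality) applied to $X=W^*$, $\mu=\hat\omega$, yields $\hat\lambda(\hat\omega^*)=\big[(\iota\otimes\hat\omega)(W)\big]^*$, so $\hat\lambda(\hat\omega^*)^*=(\iota\otimes\hat\omega)(W)$. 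Writing $U=J\hat J$ (a \emph{linear} unitary, with $U^*=\hat J J$), the right-hand side becomes $J\hat J\,(\iota\otimes\hat\omega)(W)\,\hat J J=U\,(\iota\otimes\hat\omega)(W)\,U^*$; as $U\otimes1$ acts only on the first leg, this equals $(\iota\otimes\hat\omega)\big((U\otimes1)W(U^*\otimes1)\big)$.

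Next I would identify the left-hand side. By the duality recalled in Section~\ref{sec:lcqg} we have $\widehat{\hat\G^\op}=\G'$, equivalently $\hat\G^\op=\widehat{\G'}$, so the multiplicative unitary of $\hat\G^\op$ is the dual of that of $\G'$, namely $\sigma(W')^*\sigma$ with $W'=(J\otimes J)W(J\otimes J)$. Hence $\hat\lambda^\op(\hat\omega)=(\hat\omega\otimes\iota)\big(\sigma(W')^*\sigma\big)=(\iota\otimes\hat\omega)\big((J\otimes J)W^*(J\otimes J)\big)$, and it suffices to prove $(J\otimes J)W^*(J\otimes J)=(U\otimes1)W(U^*\otimes1)$. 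The key input is the standard relation $(\kappa\otimes\hat\kappa)(W)=W$ (see \cite{kus2}). Expanding with $\kappa(x)=\hat J x^*\hat J$ and $\hat\kappa(y)=Jy^*J$ gives, on elementary tensors and then by normality, $(\kappa\otimes\hat\kappa)(z)=(\hat J\otimes J)z^*(\hat J\otimes J)$, so the relation reads $W^*=(\hat J\otimes J)W(\hat J\otimes J)$. Substituting and using $(J\otimes J)(\hat J\otimes J)=(J\hat J)\otimes1=U\otimes1$ and $(\hat J\otimes J)(J\otimes J)=(\hat J J)\otimes1=U^*\otimes1$ gives exactly the required equality, so the left-hand side matches the right-hand side.

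The main obstacle is purely the antilinear bookkeeping: the symbols $J\otimes1$ and $1\otimes J$ are \emph{not} well defined, since a linear$\,\otimes\,$antilinear factor is ambiguous on scalars, so one must never split $J\otimes J$ and must instead keep the conjugations packaged in the \emph{both-antilinear} combination $\hat J\otimes J$ produced by $(\kappa\otimes\hat\kappa)(W)=W$. Once the correct crossed conjugation is in place the factors $U$ and $U^*$ on the two sides line up automatically, and the only external facts needed are the duality $\hat\G^\op=\widehat{\G'}$ together with $W'=(J\otimes J)W(J\otimes J)$, and the antipodal invariance $(\kappa\otimes\hat\kappa)(W)=W$.
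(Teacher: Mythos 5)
Your proof is correct. It follows the same broad strategy as the paper's — both reduce the claim to the fact that the multiplicative unitary of $\hat\G^\op$ is $(J\otimes J)\hat W(J\otimes J)$ and then shuffle the conjugations using the unitary antipode — but the execution differs in two ways. First, you obtain the opposite multiplicative unitary via the identification $\hat\G^\op = \widehat{\G'}$ (biduality applied to the paper's statement $\widehat{\hat\G^\op}=\G'$) together with $W' = (J\otimes J)W(J\otimes J)$; the paper instead cites $W^\op = (\hat J\otimes\hat J)W(\hat J\otimes\hat J)$ from \cite{kus2} and dualises. These give the same operator, since $\sigma (W')^*\sigma = (J\otimes J)\sigma W^*\sigma(J\otimes J) = (J\otimes J)\hat W(J\otimes J)$. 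Second, and more substantively, you work entirely at the operator level, taking the antipodal invariance $(\kappa\otimes\hat\kappa)(W)=W$, i.e.\ $W^* = (\hat J\otimes J)W(\hat J\otimes J)$, as the key input; the paper instead computes matrix coefficients against vector functionals, using the identity $\hat\omega_{J\xi_0,J\eta_0} = \hat\kappa_*(\hat\omega^*)$ together with the intertwining relation $\kappa\hat\lambda = \hat\lambda\hat\kappa_*$. These inputs are equivalent — the intertwining relation is precisely the slice form of your invariance relation, so your appeal to \cite{kus2} is legitimate, and if one preferred, $(\kappa\otimes\hat\kappa)(W)=W$ could be derived from the relation $\kappa\hat\lambda = \hat\lambda\hat\kappa_*$ that the paper records in its preliminaries. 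What your version buys is a coordinate-free argument in which all the antilinear bookkeeping is confined to the single well-defined antilinear operator $\hat J\otimes J$ (a pitfall you correctly identify: one may never split such a tensor into linear-times-antilinear legs); what the paper's version buys is that it uses only facts explicitly stated in its own Section~\ref{sec:lcqg}, at the cost of threading complex conjugations through several inner products.
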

\begin{proof}
From \cite[Section~4]{kus2}, we have that
$W^\op = (\hat J\otimes\hat J)W(\hat J\otimes\hat J)$, and so by duality,
$\hat W^\op = (J\otimes J) \hat W (J\otimes J)$.
For $\hat\omega = \hat\omega_{\xi_0,\eta_0} \in L^1(\hat\G)$, we have that
\[ \ip{x}{\hat\omega_{J\xi_0,J\eta_0}} = (J\xi_0|xJ\eta_0)
= (\eta_0|Jx^*J\xi_0) = \ip{\hat\kappa(x)}{\hat\omega^*}
\qquad (x\in L^\infty(\hat\G)). \]
Thus, for $\xi,\eta\in L^2(\G)$, we have
\begin{align*} \big( \xi \big| \hat\lambda^\op(\hat\omega)\eta \big) &=
\big( \xi \big| (\hat\omega\otimes\iota)(\hat W^\op) \eta \big)
= \big( \xi_0 \otimes \xi \big| (J\otimes J)\hat W(J\otimes J)
   (\eta_0\otimes \eta) \big) \\
&= \big( \hat W (J\eta_0\otimes J\eta) \big| J\xi_0\otimes J\xi \big)
= \overline{ \big( J\xi_0\otimes J\xi \big| \hat W (J\eta_0\otimes J\eta) \big) } \\
&= \overline{ \big( J\xi \big| (\hat\omega_{J\xi_0,J\eta_0}\otimes\iota)(\hat W)
   J\eta \big) }
= \overline{ \big( J\xi \big| \hat\lambda(\hat\kappa_*(\hat\omega^*)) J\eta \big) } \\
&= \big( \kappa\hat\lambda(\hat\omega^*) J\eta \big| J\xi \big)
= \big( \hat J \hat\lambda(\hat\omega^*)^* \hat JJ\eta \big| J\xi \big)
= \big( \hat JJ\xi \big| \hat\lambda(\hat\omega^*)^* \hat JJ\eta \big).
\end{align*}
Thus $\hat\lambda^\op(\hat\omega) = J \hat J \hat\lambda(\hat\omega^*)^* \hat JJ$.
\end{proof}

Given a left multiplier $L_*$ of $L^1(\hat\G)$ define
\[ L_*^\dagger(\hat\omega) = L_*(\hat\omega^*)^* \qquad (\hat\omega\in L^1(\G)). \]
For $\hat\omega\in L^1(\hat\G)$, recall that $\hat\omega^*\in L^1(\hat\G)$
satisfies $\ip{x}{\hat\omega^*} = \overline{\ip{x^*}{\hat\omega}}$ for
$x\in L^\infty(\hat\G)$.
As the coproduct $\Delta$ is a $*$-homomorphism, it is easy to see that
$L^1(\hat\G)\rightarrow L^1(\hat\G); \hat\omega \mapsto \hat\omega^*$ is
a conjugate-linear algebra homomorphism.  It follows that $L_*^\dagger$ is
a left multiplier; completely bounded if $L_*$ is (compare with the proof
of Lemma~\ref{lem:ten}).
Similarly, we define $R_*^\dagger$ for a right multiplier.

\begin{proposition}\label{prop:thirteen}
For $L_*\in M_{cb}^l(L^1(\G))$, let $L_*$ be given by an invariant
pair $(\alpha,\beta)$.  Then the invariant pair $(\beta,\alpha)$
induces the left multiplier $L^\dagger_*$.
\end{proposition}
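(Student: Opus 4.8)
The plan is to identify the left multiplier produced by $(\beta,\alpha)$ with $L_*^\dagger$ by a direct duality computation, once the invariance of $(\beta,\alpha)$ is in hand. Write $L=(L_*)^*$; since $(\alpha,\beta)$ is invariant and induces $L_*$, Proposition~\ref{prop:two} gives $L(x)=\tilde\beta^*(x\otimes 1)\tilde\alpha$ for $x\in L^\infty(\hat\G)$. Proposition~\ref{prop:eight} already shows that $(\beta,\alpha)$ is again an invariant pair; hence, by Proposition~\ref{prop:two}, it induces a completely bounded left multiplier $L'_*$ whose adjoint $L':=(L'_*)^*$ is given by $L'(x)=\tilde\alpha^*(x\otimes 1)\tilde\beta$. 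It remains to prove $L'_*=L_*^\dagger$.

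The key observation is the operator identity
\[ L'(x)=L(x^*)^* \qquad (x\in L^\infty(\hat\G)), \]
which is immediate on taking the adjoint of $L(x^*)=\tilde\beta^*(x^*\otimes 1)\tilde\alpha$. I would then compute the pre-adjoint of $L'$ explicitly. Using the defining relation $\langle x,\hat\omega^*\rangle=\overline{\langle x^*,\hat\omega\rangle}$ of the involution on $L^1(\hat\G)$, the relation $L=(L_*)^*$, and the definition $L_*^\dagger(\hat\omega)=L_*(\hat\omega^*)^*$, one finds for all $x\in L^\infty(\hat\G)$ and $\hat\omega\in L^1(\hat\G)$ that
\[ \langle x,L_*^\dagger(\hat\omega)\rangle
=\overline{\langle x^*,L_*(\hat\omega^*)\rangle}
=\overline{\langle L(x^*),\hat\omega^*\rangle}
=\langle L(x^*)^*,\hat\omega\rangle
=\langle L'(x),\hat\omega\rangle. \]
As $x$ and $\hat\omega$ are arbitrary this reads $(L_*^\dagger)^*=L'$, and by uniqueness of pre-adjoints we conclude $L'_*=L_*^\dagger$, as required.

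There is no genuine difficulty beyond careful bookkeeping with the two distinct involutions in play---the operator adjoint on $L^\infty(\hat\G)$ and the conjugate-linear involution $\hat\omega\mapsto\hat\omega^*$ on $L^1(\hat\G)$---together with the adjoint/pre-adjoint correspondence. The one point requiring care is the direction of each complex conjugation in the displayed chain. I would also recall, as noted in the paragraph preceding the statement, that $\hat\omega\mapsto\hat\omega^*$ is a conjugate-linear algebra homomorphism of $L^1(\hat\G)$, which is precisely what guarantees that $L_*^\dagger$ is a bona fide completely bounded left multiplier, so that the identity $(L_*^\dagger)^*=L'$ genuinely exhibits $L_*^\dagger$ as the multiplier induced by $(\beta,\alpha)$.
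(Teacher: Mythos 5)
Your proof is correct, and it takes a genuinely different route from the paper's. The paper proves this proposition by passing through the opposite quantum group: it forms the associated pair $(\beta',\alpha')$ over $C_0(\G')$, considers the left multiplier $T_*^{\op}$ of $L^1(\hat\G^{\op})$ it induces, and then uses Lemma~\ref{lem:eight} (the formula $\hat\lambda^{\op}(\hat\omega) = J\hat J\hat\lambda(\hat\omega^*)^*\hat JJ$) to translate the representing relation on the $\G'$ side into the statement that $L_*^\dagger$ is represented by $\kappa(a)$, the very element with respect to which $(\beta,\alpha)$ is invariant; injectivity of $\hat\lambda$ then identifies the multipliers. You bypass all of this: your key observation is the operator identity $L'(x)=L(x^*)^*$, which follows from taking the adjoint of $\tilde\beta^*(x^*\otimes 1)\tilde\alpha$, and the rest is a four-step duality chain unwinding the definition of $\dagger$, the involution $\langle x,\hat\omega^*\rangle=\overline{\langle x^*,\hat\omega\rangle}$, and the adjoint/pre-adjoint correspondence. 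Each step of your chain checks out, and the conclusion $(L_*^\dagger)^*=L'$ together with injectivity of the adjoint map does give $L'_*=L_*^\dagger$. What your approach buys is brevity and self-containedness: beyond citing Proposition~\ref{prop:eight} for invariance of $(\beta,\alpha)$ (which the statement itself presupposes) and Proposition~\ref{prop:two} to know $L'$ is the adjoint of a completely bounded left multiplier, you need no quantum-group machinery at all --- the result becomes essentially formal. What the paper's heavier route buys is the explicit bookkeeping of representing elements ($b$ for $L_*$, $\kappa(a)$ for $L_*^\dagger$) and the tie to the opposite-algebra picture, which is precisely what is re-used in Theorem~\ref{thm:one} to identify the swap $(\alpha,\beta)\mapsto(\beta,\alpha)$ with the antipode; in your version that information is recovered only indirectly, by applying Proposition~\ref{prop:two} again to the pair $(\beta,\alpha)$.
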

\begin{proof}
Let $(\alpha,\beta)$ be invariant with respect to $b\in C^b(\G)$, and let
$(\beta,\alpha)$ be invariant with respect to $\kappa(a)$.  Thus
$(\beta',\alpha')$ is invariant with respect to $J\kappa(a)J = J\hat J a^* \hat JJ$.
Let $T_*^\op$ be the associated left multiplier of $L^1(\hat\G^\op)$,
and let $T_*$ be the associated right multiplier of $L^1(\hat\G)$.  Then,
as in Proposition~\ref{prop:eight}, we have that
\[ T(x) = (\tilde\alpha')^*(x\otimes 1)\tilde\beta'
= J\tilde\alpha^*(JxJ\otimes 1)\tilde\beta J
= \hat\kappa L \hat\kappa(x) \qquad (x\in L^\infty(\hat\G)). \]
It follows that
\[ \hat\lambda^\op(\hat\kappa_*L_*\hat\kappa_*(\hat\omega))
= \hat\lambda^\op(T_*^\op(\hat\omega))
= J \hat J a^* \hat JJ \hat\lambda^\op(\hat\omega) \qquad (\hat\omega\in L^1(\hat\G)). \]
Now, for $\hat\omega \in L^1(\hat\G)$, by Lemma~\ref{lem:eight}, we have that
$\hat\lambda^\op(\hat\kappa_*(\hat\omega)) =
J\hat J \hat\lambda(\hat\kappa(\hat\omega^*))^* \hat JJ
= J\hat J \kappa (\hat\lambda(\hat\omega^*))^* \hat JJ
= J \hat\lambda(\hat\omega^*) J$.
For $\hat\omega\in L^1(\hat\G)$, let $\hat\sigma = \hat\kappa_*(\hat\omega^*)$,
so also $\hat\omega = \hat\kappa_*(\hat\sigma^*)$.  Then
\[ \hat\lambda^\op(\hat\kappa_*L_*\hat\kappa_*(\hat\omega))
= J \hat\lambda(L_*\hat\kappa_*(\hat\omega)^*) J
= J \hat\lambda(L_*^\dagger\hat\kappa_*(\hat\omega^*)) J
= J \hat\lambda(L_*^\dagger(\hat\sigma)) J, \]
and also
\[ J\hat J a^* \hat JJ \hat\lambda^\op(\hat\omega)
= J\kappa(a)J \hat\lambda^\op(\hat\kappa_*(\hat\omega^*))
= J\kappa(a)J J \hat\lambda(\hat\omega) J. \]
As these two are equal, we see that
\[ \hat\lambda(L_*^\dagger(\hat\sigma)) = \kappa(a) \hat\lambda(\hat\sigma)
\qquad (\hat\sigma\in L^1(\hat\G)). \]
Thus $L_*^\dagger$ is represented by $\kappa(a)$, which $(\beta,\alpha)$ is
invariant with respect to, as required.
\end{proof}

\subsection{Taking a coordinate approach}\label{sec:coord}

We have shown that an invariant pair $(\alpha,\beta)$, say represented by $b\in C^b(\G)$,
gives rise to another invariant pair $(\beta,\alpha)$, say represented by
$\kappa(a)\in C^b(\G)$.  In this section, we show that the relationship between
$a$ and $b$ is given by the (in general, unbounded) antipode $S$.

Let us recall from \cite[Section~5.5]{kus}
that $MC_I(C_0(\G))$ is the collection of $(x_i)_{i\in I} \subseteq M(C_0(\G))$
such that $\sum_i x_i^*x_i$ is strictly convergent in $M(C_0(\G))$.  Similarly,
define $MR_I(C_0(\G))$ to be the collection of those families $(x_i^*)_{i\in I}$
with $(x_i)\in MC_I(C_0(\G))$.

Let $K$ be a Hilbert space, and let $\alpha\in\mc L(C_0(\G),C_0(\G)\otimes K)$.
Let $(e_i)$ be an orthonormal basis for $K$, and let $\alpha_i = (\iota\otimes e_i)\alpha
\in \mc L(C_0(\G)) \cong C^b(\G)$ for each $i$.  A simple calculation shows
that $(\iota\otimes e_i)^*(\iota\otimes e_i) = 1\otimes\theta_{e_i,e_i} \in
\mc L(C_0(\G)\otimes K)$, and so
$\sum_i (\iota\otimes e_i)^*(\iota\otimes e_i)$ converges strictly to the identity.
Thus $\sum_i \alpha_i^* \alpha_i$ converges strictly to $\alpha^*\alpha$, and
so $(\alpha_i) \in MC_I(C_0(\G))$.  Furthermore, we have that
\[ \alpha(a) = \sum_i \alpha_i a \otimes e_i \in C_0(\G)\otimes K
\qquad (a\in C_0(\G)), \]
with the sum converging in norm.

Similarly, from Proposition~\ref{prop:three}, we have that
$(\Delta*\alpha)_i = \Delta(\alpha_i)$ for all $i$.  Hence, a pair $(\alpha,\beta)$
is invariant with respect to $b\in C^b(\G)$ precisely when
\[ \sum_i (1\otimes\beta_i^*) \Delta(\alpha_i) = b\otimes 1\in C^b(\G)\otimes 1. \]

\begin{theorem}\label{thm:one}
For $a,b\in C^b(\G)$, the following are equivalent:
\begin{enumerate}
\item there is $R_* \in M_{cb}^r(L^1(\hat\G))$ represented by $a$, with
  $R_*^\op$ being represented by $JbJ$;
\item there is a pair $(\alpha,\beta)$ of maps in $\mc L(C_0(\G),C_0(\G)\otimes K)$
  which is invariant with respect to $b$, and with $(\beta,\alpha)$ being invariant
  with respect to $\kappa(a)$;
\item there is $L_*\in M_{cb}^l(L^1(\hat\G))$ represented by $b\in C^b(\G)$, with
  $L^\dagger_*$ being represented by $\kappa(a)$.
\end{enumerate}
Furthermore, if these hold, then $a\in D(S)^* = D(S^{-1}) = D(\tau_{-i/2})^*$
and $b = \tau_{-i/2}(a^*) = \hat J S^{-1}(a) \hat J$.
\end{theorem}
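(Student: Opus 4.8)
The plan is to separate the three-fold equivalence, which is essentially a repackaging of the earlier results, from the concluding formula, which is where the work lies. For the equivalences I would argue (2)$\Leftrightarrow$(3) directly from Propositions~\ref{prop:two} and~\ref{prop:thirteen}: an invariant pair $(\alpha,\beta)$ with respect to $b$ induces, by Proposition~\ref{prop:two}, a completely bounded left multiplier $L_*$ represented by $b$, and by Proposition~\ref{prop:thirteen} the swapped pair $(\beta,\alpha)$ induces exactly $L_*^\dagger$; thus $(\beta,\alpha)$ being invariant with respect to $\kappa(a)$ is the same as $L_*^\dagger$ being represented by $\kappa(a)$. For (1)$\Leftrightarrow$(2) I would run the opposite-algebra construction of Section~\ref{usingopalg}: form the associated pair $(\alpha',\beta')$ over $C_0(\G')$, which by Lemma~\ref{lemma:sixa} is invariant with respect to $JbJ$, hence defines a left multiplier $R_*^\op$ of $L^1(\hat\G^\op)$ represented by $JbJ$, i.e.\ a right multiplier $R_*$ of $L^1(\hat\G)$; Proposition~\ref{prop:eight} then delivers an $a\in C^b(\G)$ representing $R_*$ with $(\beta,\alpha)$ invariant with respect to $\kappa(a)$. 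Each step is reversible, giving the converses.

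The crux of the final clause is the identity
\[ \hat\lambda(\hat\omega^*) = S\big(\hat\lambda(\hat\omega)^*\big) \qquad (\hat\omega\in L^1(\hat\G)). \]
To prove it I would write $\hat\lambda(\hat\omega) = (\hat\omega\otimes\iota)(\hat W) = (\iota\otimes\hat\omega)(W^*)$, using $\hat W = \sigma W^*\sigma$, so that $\hat\lambda(\hat\omega)^* = (\iota\otimes\hat\omega^*)(W)$. By the standard characterisation of the antipode recalled in Section~\ref{sec:lcqg}, every such slice lies in $D(S)$ and $S\big((\iota\otimes\mu)(W)\big) = (\iota\otimes\mu)(W^*)$; applying this gives $S(\hat\lambda(\hat\omega)^*) = (\iota\otimes\hat\omega^*)(W^*) = \hat\lambda(\hat\omega^*)$. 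The point to stress is that no domain hypothesis intervenes, since the elements in question are automatically slices of $W$.

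Assembling these, I would start from (3), expand $\hat\lambda(L_*^\dagger(\hat\omega)) = \hat\lambda\big(L_*(\hat\omega^*)^*\big)$, apply the identity to $\mu = L_*(\hat\omega^*)$, insert $\hat\lambda(L_*(\hat\omega^*)) = b\,\hat\lambda(\hat\omega^*)$, and use the identity once more for $\hat\omega^*$ together with the relation $S(y)^* = S^{-1}(y^*)$ to reach
\[ \kappa(a)\,\hat\lambda(\hat\omega) = \hat\lambda\big(L_*^\dagger(\hat\omega)\big) = S\big( S^{-1}(\hat\lambda(\hat\omega))\, b^* \big) \qquad (\hat\omega\in L^1(\hat\G)), \]
where the bracketed element is in $D(S)$ automatically, being of the form $\hat\lambda(\nu)^*$. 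Writing $z = S^{-1}(\hat\lambda(\hat\omega)) = (\iota\otimes\hat\omega)(W)$, this says $S(zb^*) = \kappa(a)\,S(z)$. Anti-multiplicativity of $S$ then formally yields $S(b^*) = \kappa(a)$; since $S = \kappa\tau_{-i/2}$ with $\kappa^2 = \iota$, applying $\kappa$ gives $a = \tau_{-i/2}(b^*)$, whence $a^* = \tau_{i/2}(b)$ and $b = \tau_{-i/2}(a^*)$. In particular $a^*\in D(\tau_{-i/2})$, i.e.\ $a\in D(\tau_{-i/2})^* = D(S^{-1}) = D(S)^*$ (these equalities coming from $D(S) = D(\tau_{-i/2})$ and $S^{-1} = \tau_{i/2}\kappa$ with $\kappa$ bounded). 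The remaining formula $b = \hat J S^{-1}(a)\hat J$ I would read off from $S^{-1} = \tau_{i/2}\kappa$ and $\kappa(a) = \hat J a^*\hat J$, using that conjugation by $\hat J$ commutes with the scaling group and so, under analytic continuation, satisfies $\tau_{i/2}(\hat J z\hat J) = \hat J\tau_{-i/2}(z)\hat J$.

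The main obstacle is precisely the word ``formally'' above: as $S$ is unbounded and $b^*$ lies in the multiplier algebra $M(C_0(\G))$, I cannot factor $b^*$ out of $S(zb^*)$ without already knowing that $b^*\in D(S)$, which is part of what must be proved. I would resolve this by a closedness argument: choose $\hat\omega_i$ so that the slices $z_i = (\iota\otimes\hat\omega_i)(W)$ form an approximate identity with $S(z_i)$ also tending strictly to $1$, observe that $z_ib^* = \hat\lambda(L_*(\hat\omega_i^*))^* \to b^*$ while $S(z_ib^*) = \kappa(a)S(z_i) \to \kappa(a)$, and invoke the strict closedness of the (multiplier extension of the) antipode to conclude that $b^*\in D(S)$ with $S(b^*) = \kappa(a)$. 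Carrying out this limiting argument carefully at the multiplier level is the technical heart of the proof.
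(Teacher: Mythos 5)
Your treatment of the three-way equivalence is fine and is essentially the paper's own: the paper likewise disposes of (i)$\Leftrightarrow$(ii) by Proposition~\ref{prop:eight} and (ii)$\Leftrightarrow$(iii) by Proposition~\ref{prop:thirteen} (with Theorem~\ref{thm:three} supplying the invariant pair when one starts from a multiplier). Your derivation of the relation $\kappa(a)\hat\lambda(\hat\omega) = S\big(S^{-1}(\hat\lambda(\hat\omega))\,b^*\big)$, via the identity $S(\hat\lambda(\hat\omega)^*)=\hat\lambda(\hat\omega^*)$, is also correct; note, though, that this identity is not "recalled in Section~\ref{sec:lcqg}" of the paper --- it is exactly the fact the paper records \emph{after} Theorem~\ref{thm:one}, citing \cite[Proposition~8.3]{kus} and \cite[Proposition~2.4]{kus2}. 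Indeed, your whole strategy for the final clause is the alternative route the paper explicitly sketches there, saying it "would give another way to show the above theorem" when \emph{combined with the work of Kustermans in \cite{kus4} on strict extensions of one-parameter groups}. The paper's actual proof is different and shorter: it rewrites invariance of $(\beta,\alpha)$ in coordinate form, takes adjoints to get $\sum_i \Delta(\beta_i^*)(1\otimes\alpha_i)=\hat J a\hat J\otimes 1$, and then quotes \cite[Corollary~5.34, Remark~5.44]{kus} as a black box to conclude $\kappa(a^*)\in D(S)$ with $S\kappa(a^*)=b$. That citation is precisely what absorbs the unbounded-antipode difficulty.

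The gap in your proposal is the very last step, and it is not merely technical. You need a bounded net $(\hat\omega_i)$ with \emph{both} $z_i=(\iota\otimes\hat\omega_i)(W)\to 1$ and $S(z_i)=\hat\lambda(\hat\omega_i)\to 1$ strictly, and no construction is given. Controlling one of the two is easy: since $\hat\lambda(L^1(\hat\G))$ is norm dense in $C_0(\G)$, one can approximate a bounded approximate identity of $C_0(\G)$ and arrange $\hat\lambda(\hat\omega_i)\to 1$ strictly and boundedly. But this gives no control whatsoever on $z_i=S^{-1}(\hat\lambda(\hat\omega_i))$, because the passage between the two legs of the graph is the unbounded antipode --- exactly the operator whose domain you are trying to enter, so the argument is circular. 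The natural fix, taking $\hat\omega_i$ self-adjoint ($\hat\omega_i^*=\hat\omega_i$) so that $z_i=\hat\lambda(\hat\omega_i)^*$ and the two conditions coalesce, trades the problem for another unproved one: that $\{\hat\lambda(\hat\omega):\hat\omega^*=\hat\omega\}$ contains a bounded net tending strictly to $1$. This is genuinely delicate: classically the self-adjoint \emph{states} give normalized positive-definite coefficient functions of the regular representation, which tend to $1$ uniformly on compacta only when $G$ is amenable, so one must use non-positive self-adjoint functionals, and the classical argument that produces them (approximate $1_K$ in sup norm and symmetrize with $u\mapsto\overline{u(\,\cdot\,^{-1})}$) again uses boundedness of $S$. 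So as written the "closedness" step does not close. To finish along your route one should instead run the smearing machinery of \cite{kus4} (regularize $b^*$ along the scaling group and exploit the relation $\tau_{-i/2}(zb^*)=\tau_{-i/2}(z)a$ for all slices $z$), or simply pass to the coordinate form and cite \cite[Corollary~5.34]{kus} as the paper does.
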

\begin{proof}
By Proposition~\ref{prop:eight}, (i) and (ii) are equivalent, and by
Proposition~\ref{prop:thirteen}, (ii) and (iii) are equivalent.

We shall assume (ii).  As $(\beta,\alpha)$ is invariant with respect to $\kappa(a)$,
applying the adjoint shows that
\[ \sum_i \Delta(\beta_i^*) (1\otimes\alpha_i) = \hat J a\hat J\otimes 1
\in C^b(\G)\otimes 1. \]
By \cite[Corollary~5.34]{kus} (and, as we are working with $C^b(\G)$ and not
$C_0(\G)$ here, we need also to look at \cite[Remark~5.44]{kus}) it follows that
$\kappa(a^*) \in D(S)$ with $S\kappa(a^*) = b$.  Thus $\tau_{-i/2}(a^*)=b$,
as claimed.
\end{proof}

For each $\hat\omega\in L^1(\hat\G)$, we have that $\hat\lambda(\hat\omega^*)^*
\in D(S) = D(\tau_{-i/2})$ and $S(\hat\lambda(\hat\omega^*)^*) = \hat\lambda(\hat\omega)$.
Furthermore, $\{ \hat\lambda(\hat\omega^*)^* : \hat\omega\in L^1(\hat\G) \}$ forms
a core for $S$ (either as an operator on $C_0(\G)$ or on $L^\infty(\G)$).
These results follow easily from \cite[Proposition~8.3]{kus} and
\cite[Proposition~2.4]{kus2}.  Combined with the work of Kustermans in \cite{kus4} on
strict extensions of one-parameter groups on C$^*$-algebras, these observations
would give another way to show the above theorem.  The proof of Lemma~\ref{lem:eight}
can be adapted to show that $\hat\lambda^{\op}(\hat\omega) = J\hat J
S^{-1}(\hat\lambda(\hat\omega))\hat JJ$ for $\hat\omega\in L^1(\hat\G)$, and this
could then be used to argue purely at the level of
multipliers, instead of with invariant pairs.

Notice that the ``coordinate'' approach is very close in spirit to how Vaes and Van Daele
gave a definition of a \emph{Hopf C$^*$-algebra} in \cite{vvd}.  It would be interesting
to explore this further, together with the implicit link with Haagerup tensor products
(which Spronk used extensively in his study of the completely bounded multipliers of
$A(G)$ in \cite{spronk}).  Indeed, if one looks at the proof of
\cite[Corollary~4.4]{jnr}, then there are two steps.  Firstly, the adjoint of
a right multiplier is extended from $L^\infty(\G)$ to a map on $B(L^2(\G))$ with
certain commutation properties (see \cite[Proposition~4.3]{jnr}) and then an argument
using the extended (or weak$^*$) Haagerup tensor product is used,
\cite[Proposition~3.2]{jnr} (compare with \cite[Theorem~4.2]{BS}, where it is more
explicit as to how the Haagerup tensor product appears).
Indeed, with this perspective, what we have done is to finesse where we can take
the elements in the extended Haagerup tensor product expansion (that is, from
$C^b(\G)$ and not $L^\infty(\G)$).  We note that \cite[Corollary~5.6]{spronk}
shows that in the motivating example of $A(G)$, we can even work with
$\operatorname{wap}(G)$
and not $C^b(G)$: it's unclear what the ``quantum'' analogue of this would be.

We curiously get the following strengthening of \cite[Corollary~5.34]{kus}
(and \cite[Remark~5.44]{kus}) where it is a hypothesis that there exists $b\in C^b(\G)$
with $b\otimes 1 = \sum_i (1\otimes p_i)\Delta(q_i)$, and the conclusion is that
$b=\overline S(a)$.
To be careful, we now do not identify $S$ with its strict closure.

\begin{corollary}
Let $a\in C^b(\G)$ be such that for some $(p_i)\in MR_I(C_0(\G))$ and
$(q_i)\in MC_I(C_0(\G))$, we have that
\[ a\otimes 1 = \sum_i \Delta(p_i)(1\otimes q_i). \]
Let $\overline S$ be the strict closure of $S$ on $C^b(\G)$.
Then $a\in D(\overline S)$ and
\[ \overline S(a)\otimes 1 = \sum_i (1\otimes p_i)\Delta(q_i). \]
\end{corollary}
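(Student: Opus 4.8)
The plan is to recognise the two displayed families as the coordinates of a genuine invariant pair and then to read off the conclusion from Theorem~\ref{thm:one}. First I would set $\beta_i = p_i^*$ and $\alpha_i = q_i$. Since $(p_i)\in MR_I(C_0(\G))$ means exactly that $(p_i^*)\in MC_I(C_0(\G))$, both $(\alpha_i)$ and $(\beta_i)$ lie in $MC_I(C_0(\G))$, so by the coordinate discussion preceding Theorem~\ref{thm:one} they assemble into adjointable maps $\alpha,\beta\in\mc L(C_0(\G),C_0(\G)\otimes K)$ with $\alpha(x)=\sum_i\alpha_i x\otimes e_i$ and similarly for $\beta$. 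These hypotheses on $(p_i),(q_i)$ are precisely what let the families assemble into adjointable maps, placing us in the setting of the invariant-pair machinery.

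Next I would translate the hypothesis into an invariance statement. Taking adjoints in the coordinate form $\sum_i(1\otimes\alpha_i^*)\Delta(\beta_i)=c\otimes 1$ of invariance of the pair $(\beta,\alpha)$ shows it is equivalent to $\sum_i\Delta(\beta_i^*)(1\otimes\alpha_i)=c^*\otimes 1$. As $\beta_i^*=p_i$ and $\alpha_i=q_i$, the hypothesis $a\otimes 1=\sum_i\Delta(p_i)(1\otimes q_i)$ says precisely that $(\beta,\alpha)$ is invariant with respect to $a^*$; note that the hypothesis already places this particular sum in $C^b(\G)\otimes 1$, so no further argument is needed for this direction.

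With $(\beta,\alpha)$ invariant, I would apply Proposition~\ref{prop:eight} to the pair $(\beta,\alpha)$ (the ``swap invariance'' direction) to deduce that $(\alpha,\beta)$ is also invariant, say with respect to some $c\in C^b(\G)$. Unwinding the coordinate form of invariance, this is exactly the assertion that $\sum_i(1\otimes\beta_i^*)\Delta(\alpha_i)=\sum_i(1\otimes p_i)\Delta(q_i)$ converges strictly to $c\otimes 1\in C^b(\G)\otimes 1$ --- the existence of the \emph{second} sum, which was a standing \emph{hypothesis} in \cite[Corollary~5.34]{kus} but is here deduced. I expect this to be the main obstacle, in the sense that it is the genuine content of the strengthening: everything else is bookkeeping, whereas the convergence of the swapped sum rests on the symmetry theory built up through Lemma~\ref{lemma:sixa} and Proposition~\ref{prop:eight}.

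Finally I would invoke the furthermore clause of Theorem~\ref{thm:one}. We are now in situation (ii), with $(\alpha,\beta)$ invariant with respect to $b=c$ and $(\beta,\alpha)$ invariant with respect to $\kappa(a_0)=a^*$; since $\kappa$ is an anti-$*$-automorphism with $\kappa^{-1}=\kappa$, this gives $a_0=\kappa(a^*)=\kappa(a)^*$. The theorem then yields $a_0\in D(S^{-1})$ and $c=\tau_{-i/2}(a_0^*)=\tau_{-i/2}(\kappa(a))$. Using $S=\tau_{-i/2}\kappa=\kappa\tau_{-i/2}$ this reads $c=S(a)$, and from $\kappa(a)=a_0^*\in D(\tau_{-i/2})$ together with the commutation of $\kappa$ with $\tau_{-i/2}$ I would conclude $a\in D(\tau_{-i/2})$. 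Reading $S$ and $\tau_{-i/2}$ as their strict closures on $C^b(\G)$ --- the point flagged just before the statement --- this says $a\in D(\overline S)$ and $\overline S(a)\otimes 1=c\otimes 1=\sum_i(1\otimes p_i)\Delta(q_i)$, as required.
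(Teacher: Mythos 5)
Your proposal is correct and follows essentially the same route as the paper's own proof: you assemble $\alpha,\beta$ from $(q_i)$ and $(p_i^*)$, take adjoints to recognise that $(\beta,\alpha)$ is invariant with respect to $a^*$, use the swap result (Proposition~\ref{prop:eight}) to deduce that $(\alpha,\beta)$ is invariant with respect to some $c\in C^b(\G)$, and then read off the conclusion from the ``furthermore'' clause of Theorem~\ref{thm:one}. Your explicit unwinding of the $\kappa$ and $\tau_{-i/2}$ bookkeeping, and your identification of where the genuine content lies (the existence of the second sum as an element of $C^b(\G)\otimes 1$, which Kustermans--Vaes take as a hypothesis), is just a more detailed rendering of the paper's citation of \cite[Remark~5.44]{kus} and Theorem~\ref{thm:one}.
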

\begin{proof}
Let $(q_i)$ and $(p_i^*)$ induce, respectively, $\alpha$ and $\beta$ in
$\mc L(C_0(\G),C_0(\G)\otimes\ell^2(I))$, so that by applying the adjoint,
we see that $(\beta,\alpha)$ is invariant with respect to $a^*$.  Then
$(\alpha,\beta)$ is invariant say with respect to $b\in C^b(\G)$.  Thus
\[ b\otimes 1 = \sum_i (1\otimes\beta_i^*)\Delta(\alpha_i)
= \sum_i (1\otimes p_i)\Delta(q_i). \]
By \cite[Remark~5.44]{kus}, or from Theorem~\ref{thm:one}, it follows that
$a\in D(\overline S)$ and $\overline S(a)=b$, as required.
\end{proof}

A slight subtly here is the following.  Suppose that actually $a\in C_0(\G)$,
so that the above theorem tells us that $a\in D(\overline S)$.  However, this is
seemingly not enough to ensure that $a\in D(S)$ (where $S$ is considered as a densely
defined operator on $C_0(\G)$).  Indeed, using that $S=\kappa \tau_{-i/2}$,
by \cite[Proposition~2.15]{kus4}, we have that $a\in D(S)$ if and only if 
$S(a) = b \in C_0(\G)$ (as $\kappa$ leaves $C_0(\G)$ invariant).  It is not clear to
us whether this is likely to be true or not.

We could have used this ``coordinate'' approach to $\mc L(C_0(\G),C_0(\G)
\otimes K)$ throughout.  However, this would have been much harder to motivate from
Gilbert's theorem.  Furthermore, in Section~\ref{cstarmods} above, we used that
$\mc L(C_0(\G),C_0(\G)\otimes K)$ was a ``slice'' of $\mc L(C_0(\G)\otimes K)$.
This seemed like a technical tool, but in the next two sections, we shall see how this
viewpoint actually appears quite natural and profitable.

\section{Links with universal quantum groups}\label{sec:links_with_uni}

For a locally compact group $G$, we always have that $B(G)$, the Fourier-Stieltjes
algebra of $G$, embeds into $M_{cb}A(G)$.  Furthermore, we can construct the maps
$\alpha,\beta$ in the Gilbert representation by using unitary representations of $G$.

An analogous result holds for quantum groups.  Firstly, we consider the analogue
of $B(G)$.  Given a locally compact quantum group $\G$, we can consider
the Banach $*$-algebra $L^1_\sharp(\G)$, and then take its universal enveloping
$C^*$-algebra, say $C_0^u(\hat\G)$.  In \cite{kus1}, it is shown that $C_0^u(\hat\G)$
admits a coproduct, left and right invariant weights, and so forth, all of these
objects interacting very well with the natural quotient map $\hat\pi:C_0^u(\hat\G)
\rightarrow C_0(\hat\G)$.  Indeed, we call $C_0^u(\hat\G)$ the \emph{universal
quantum group} of $\hat\G$, the essential difference with the \emph{reduced quantum
group} $C_0(\hat\G)$ being that the invariant weights are no longer faithful.
This is a generalisation of the difference between $C^*(G)$ and $C^*_r(G)$ for a
non-amenable locally compact group $G$.  Then $C_0^u(\hat\G)^*$ becomes a Banach
algebra, and $\hat\pi^*:M(\hat\G)=C_0(\hat\G)^* \rightarrow C_0^u(\hat\G)^*$
a homomorphism.

We showed in \cite{dawsm}, adapting the argument given in
\cite[page~914]{kus}, that $C_0^u(\hat\G)^*$ embeds into $M_{cb}L^1(\hat\G)$.
To be precise, let $\iota:L^1(\hat\G) \rightarrow C_0^u(\hat\G)^*$ be the natural
inclusion, given by composing the map $L^1(\hat\G)\rightarrow C_0(\hat\G)^*$ with $\hat\pi^*$.
Then \cite[Proposition~8.3]{dawsm} shows that $\iota(L^1(\hat\G))$ is an ideal in
$C_0^u(\hat\G)^*$ and that the induced map $C_0^u(\hat\G)^*\rightarrow
M_{cb}(L^1(\hat\G))$ is an injection.

If $L^1(\hat\G)$ has a bounded approximate identity (that is,
$\hat\G$ is \emph{coamenable}) then $M_{cb}(L^1(\hat\G)) = C_0^u(\hat\G)^* = M(\hat\G)$.
We remark that we don't know if the converse is true or not.
In particular, in the commutative case, for a locally compact group, $L^1(G)$
always has a bounded approximate identity, and so $M_{cb}(L^1(G))=M(G)$
(which is the classical Wendel's Theorem).  The following result
thus shows how measures in $M(G)$ arise from invariant pairs in
$\mc L(C^*_r(G),C^*_r(G)\otimes K)$ for a suitable Hilbert space $K$.

\begin{theorem}
There exists a Hilbert space $K$ with an involution $J_K$, and a unitary
$\mc U\in\mc L(C_0(\G)\otimes K)$ with the following property.
For each $\mu\in C^u_0(\hat\G)^*$, say giving a multiplier
$(L_*,R_*)\in M_{cb}(L^1(\hat\G))$, there exist $\xi_0,\eta_0\in K$ with
$\|\xi_0\| \|\eta_0\|=\|\mu\|$, and such that:
\begin{enumerate}
\item with $\alpha = \mc U^*(\iota\otimes\xi_0)^*$ and
  $\beta = \mc U^*(\iota\otimes\eta_0)^*$, we have that $(\alpha,\beta)$
  is an invariant pair which gives $L_*$;
\item with $\gamma=\mc U^*(\iota\otimes J_K\eta_0)^*$ and
  $\delta=\mc U^*(\iota\otimes J_K\xi_0)^*$, we have that $(\gamma,\delta)$ is invariant,
  and gives $\hat\kappa_*R_*\hat\kappa_*$ (and thus, using Section~\ref{ontheright},
  gives $R_*$).
\end{enumerate}
\end{theorem}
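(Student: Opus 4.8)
The plan is to manufacture a single ``universal corepresentation'' $\mc U$ from the universal quantum group and then to run the computation of Theorem~\ref{thm:three} twice: once to read off $L_*$, and once (with the vectors twisted by $J_K$) to read off $\hat\kappa_*R_*\hat\kappa_*$. First I would fix the data $(K,\pi,J_K)$. Let $N=C^u_0(\hat\G)^{**}$ be the enveloping von Neumann algebra, represented in standard form on $K$, and write $\pi:C^u_0(\hat\G)\rightarrow N\subseteq\mc B(K)$ for the resulting faithful, non-degenerate representation. Two features of the standard form are exactly what is needed. On the one hand, every $\mu\in C^u_0(\hat\G)^*$ extends to a normal functional on $N$, hence is of the form $\mu=\omega_{\xi_0,\eta_0}\circ\pi$ with $\|\xi_0\|\|\eta_0\|=\|\mu\|$. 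On the other hand, the unitary antipode $\hat\kappa^u$ of the universal quantum group is an \emph{involutive} $*$-anti-automorphism; it extends normally to $N$, and in standard form such a map is implemented by a conjugate-linear isometric involution $J_K$, so that $\pi(\hat\kappa^u(y))=J_K\pi(y)^*J_K$. This is the involution of the statement. Applying Kustermans's result \cite{kus1} to the canonical $*$-homomorphism $L^1_\sharp(\G)\rightarrow M(C^u_0(\hat\G))$ (compare Theorem~\ref{thm:three}) yields the universal corepresentation $\mathbb V\in M(C_0(\G)\otimes C^u_0(\hat\G))$, a unitary with $(\Delta\otimes\iota)(\mathbb V)=\mathbb V_{13}\mathbb V_{23}$; setting $U=(\iota\otimes\pi)(\mathbb V)\in M(C_0(\G)\otimes\mc B_0(K))$ gives a unitary corepresentation inducing the $\mc U$ of the statement. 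Finally I would record that $\mathbb V$ intertwines the antipodes, $(\kappa\otimes\hat\kappa^u)(\mathbb V)=\mathbb V$; feeding in $\kappa(x)=\hat Jx^*\hat J$ together with $\pi(\hat\kappa^u(y))=J_K\pi(y)^*J_K$ and applying $\iota\otimes\pi$, this becomes the spatial relation
\[ (\hat J\otimes J_K)\,U\,(\hat J\otimes J_K) = U^*. \]

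For part (i), given $\mu=\omega_{\xi_0,\eta_0}\circ\pi$ I would put $\alpha=\mc U^*(\iota\otimes\xi_0)^*$ and $\beta=\mc U^*(\iota\otimes\eta_0)^*$, so that Proposition~\ref{prop:six} gives $\tilde\alpha(\xi)=U^*(\xi\otimes\xi_0)$ and $\tilde\beta(\xi)=U^*(\xi\otimes\eta_0)$; since $\mc U^*$ is unitary, $\|\alpha\|\|\beta\|=\|\xi_0\|\|\eta_0\|=\|\mu\|$. Then I would evaluate the invariance expression $(1\otimes\tilde\beta)^*W_{12}^*(1\otimes\tilde\alpha)W$ exactly as in Theorem~\ref{thm:three}: writing $1\otimes\tilde\alpha=U_{23}^*(\,\cdot\,\otimes\xi_0)$ and $1\otimes\tilde\beta=U_{23}^*(\,\cdot\,\otimes\eta_0)$, the corepresentation identity $W_{12}^*U_{23}^*W_{12}=U_{23}^*U_{13}^*$ (which is $(\Delta\otimes\iota)(U^*)$) together with $U_{23}U_{23}^*=1$ collapses the expression to $(\iota\otimes\omega_{\eta_0,\xi_0})(U^*)\otimes1$. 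Writing $a=(\iota\otimes\omega_{\eta_0,\xi_0})(U^*)$, this shows $(\alpha,\beta)$ is invariant with respect to $a$, so by Proposition~\ref{prop:two} the pair gives the left multiplier represented by $a$. With the vectors chosen so that $\mu$ is the corresponding vector functional, $a$ is precisely the slice of $\mathbb V$ representing the multiplier attached to $\mu$, i.e.\ $a=\hat\Lambda(\mu)$ (see \cite{dawsm}, adapting \cite[p.~914]{kus}); hence $(\alpha,\beta)$ gives $L_*$. By \cite[Theorem~8.9]{dawsm} the right part $R_*$ of the same multiplier is represented by this same $a$.

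For part (ii), the identical computation applied to $(\gamma,\delta)=(\mc U^*(\iota\otimes J_K\eta_0)^*,\mc U^*(\iota\otimes J_K\xi_0)^*)$ shows that this pair is invariant with respect to $a'=(\iota\otimes\omega_{J_K\xi_0,J_K\eta_0})(U^*)$. Here the spatial antipode relation does the work: substituting $U^*=(\hat J\otimes J_K)U(\hat J\otimes J_K)$ and using that $\hat J$ and $J_K$ are conjugate-linear isometric involutions, a short manipulation of inner products turns $a'$ into $\hat J\,(\iota\otimes\omega_{\xi_0,\eta_0})(U)\,\hat J$. Since $a^*=(\iota\otimes\omega_{\xi_0,\eta_0})(U)$, this is exactly $\hat J a^*\hat J=\kappa(a)$. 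By Lemma~\ref{lem:ten}, $\kappa(a)$ represents the left multiplier $\hat\kappa_*R_*\hat\kappa_*$; and by Proposition~\ref{prop:two} the pair $(\gamma,\delta)$ gives the left multiplier represented by $\kappa(a)$. As $\hat\lambda$ is injective a multiplier is determined by its representing element, so $(\gamma,\delta)$ gives $\hat\kappa_*R_*\hat\kappa_*$, from which $R_*$ is recovered as in Section~\ref{ontheright}.

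The main obstacle is the first paragraph, specifically the interplay of the two conjugations $\hat J$ on $L^2(\G)$ and $J_K$ on $K$. One has to know both that the involutive $*$-anti-automorphism $\hat\kappa^u$ is implemented by an involutive antiunitary in the standard form, and, crucially, that the relation $(\hat J\otimes J_K)U(\hat J\otimes J_K)=U^*$ follows from the antipode property $(\kappa\otimes\hat\kappa^u)(\mathbb V)=\mathbb V$ of the universal corepresentation together with its compatibility $\hat\pi\hat\kappa^u=\hat\kappa\hat\pi$ with the reduced picture. Once this relation is secured, parts (i) and (ii) are just two runs of the Theorem~\ref{thm:three} computation, and the entire role of $J_K$ is to convert $\kappa$ on the $C_0(\G)$-leg into the vector twist $\eta\mapsto J_K\eta$ on the $K$-leg.
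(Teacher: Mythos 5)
Your construction of the data $(K,\pi,J_K)$ and your treatment of part (ii) are correct but genuinely different from the paper's. The paper uses the universal representation $K=\bigoplus_\mu H_\mu$ and builds $J_K$ by hand on GNS spaces, via $J_K(\theta_\mu(a)\xi_\mu)=\theta_{\mu'}(\hat\kappa_u(a^*))\xi_{\mu'}$ with $\mu'=\hat\kappa_u^*(\mu)$; you instead take the standard form of $C_0^u(\hat\G)^{**}$ and invoke the canonical anti-unitary implementation of the conjugate-linear involutive automorphism $y\mapsto\hat\kappa_u(y^*)$, which buys you both the relation $\pi(\hat\kappa_u(y))=J_K\pi(y)^*J_K$ and the norm condition $\|\xi_0\|\|\eta_0\|=\|\mu\|$ in one stroke (at the cost of quoting Haagerup's standard-form theory). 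The invariance computation collapsing $(1\otimes\tilde\beta)^*W_{12}^*(1\otimes\tilde\alpha)W$ to a slice of $U^*_{13}$ is the paper's own. For part (ii), the paper identifies the functional $\omega_{J_K\xi_0,J_K\eta_0}\circ\theta$ with $\hat\kappa_u^*(\mu)$ and then computes inside $C_0^u(\hat\G)^*$ that $\hat\kappa_u^*(\mu)$ induces $\hat\kappa_*R_*\hat\kappa_*$; you instead use the spatial relation $(\hat J\otimes J_K)U(\hat J\otimes J_K)=U^*$ (which the paper only records \emph{after} the theorem; your version with $\hat J$ is the correct one given $\kappa(\cdot)=\hat J(\cdot)^*\hat J$) to show $(\gamma,\delta)$ is invariant with respect to $\kappa(a)$, and finish with Lemma~\ref{lem:ten}, \cite[Theorem~8.9]{dawsm} and injectivity of $\hat\lambda$. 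That inner-product manipulation is correct, and this finish is arguably cleaner — but it is conditional on part (i).

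The genuine gap is in part (i), at the sentence ``$a$ is precisely the slice of $\mathbb V$ representing the multiplier attached to $\mu$, i.e.\ $a=\hat\Lambda(\mu)$ (see \cite{dawsm}\dots)''. Proposition~\ref{prop:two} only tells you that $(\alpha,\beta)$ gives \emph{the left multiplier represented by} $a=(\iota\otimes\omega_{\eta_0,\xi_0})(U^*)$. The multiplier $L_*$ attached to $\mu$ is defined purely algebraically, by $\iota(L_*(\hat\omega))=\mu\,\iota(\hat\omega)$ inside $C_0^u(\hat\G)^*$, and what you must prove is that this $L_*$ is represented by that particular slice. This is not available by citation: \cite[Proposition~8.3]{dawsm} defines the embedding via the ideal structure, and \cite[Theorem~8.9]{dawsm} only gives existence of \emph{some} representing element in $C^b(\G)$, with no slice formula. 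This identification is exactly where the paper does its real work: it uses the fact from \cite{kus1} that $\hat U$ implements the universal coproduct, $(\hat\pi\otimes\iota)\big(\sigma(\hat\Delta_u(y))\big)=\hat U(\hat\pi(y)\otimes 1)\hat U^*$, and then runs the chain $\ip{x}{\iota^{-1}(\mu\,\iota(\hat\omega))}=\cdots=\ip{x}{L'_*(\hat\omega)}$ by pairing against $\lambda_u(\omega)$ for $\omega\in L^1_\sharp(\G)$ and using density. Without supplying this computation (or an exact reference containing it), your argument shows only that the pairs coming from $\mc U$ give \emph{some} represented left multipliers, not the ones attached to $\mu$; and since your part (ii) needs part (i) to know that $R_*$ is represented by $a$, the gap propagates to (ii) as well. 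There is also a small labelling slip: with inner products linear in the second variable and the roles forced by the statement ($\alpha$ built from $\xi_0$, $\beta$ from $\eta_0$), the vectors must be chosen so that $\mu=\omega_{\eta_0,\xi_0}\circ\pi$, not $\omega_{\xi_0,\eta_0}\circ\pi$, or else $a$ represents the multiplier attached to the wrong functional.
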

\begin{proof}
Let $\theta:C^u_0(\hat\G)\rightarrow\mc B(K)$ be the universal representation.
That is, for each state $\mu\in C_0^u(\hat\G)^*$, let $(H_\mu,\theta_\mu,\xi_\mu)$ be the
cyclic GNS construction for $\mu$, and let $K = \bigoplus_\mu H_\mu$ with $\theta$
the direct sum representation.

We next find our unitary $\mc U$.
Let $\lambda_u:L^1_\sharp(\G)\rightarrow C_0^u(\hat\G)$ be the natural map.
As in the proof of Theorem~\ref{thm:three}, using \cite{kus1}, as the map $L^1_\sharp(\G)
\rightarrow M(\mc B_0(K)); \omega\mapsto\theta(\lambda_u(\omega))$ is a
non-degenerate $*$-representation, there is a unitary corepresentation
$U\in M(C_0(\G)\otimes\mc B_0(K))$ with
\[ \theta(\lambda_u(\omega)) = (\omega\otimes\iota)(U) \quad (\omega\in L^1_\sharp(\G))
\qquad (\Delta\otimes\iota)(U) = U_{13}U_{23}. \]
Then $U$ induces $\mc U\in\mc L(C_0(\G)\otimes K)$.

Actually, the unitary $U$ is actually given by a ``universal'' unitary
$\hat U \in M(C_0(\G)\otimes C_0^u(\hat\G))$, by which we mean satisfies
$U = (\iota\otimes\theta)(\hat U)$, see the proof of \cite[Corollary~4.3]{kus1}.
Kustermans works on the dual side in \cite{kus1}, but as explained on
\cite[Page~311]{kus1}, we can use biduality to recover results for
$C_0^u(\hat\G)$.  In particular, $\hat U$ induces the coproduct in
the sense that
\[ (\hat\pi\otimes\iota)\big( \sigma(\hat\Delta_u(y)) \big)
= \hat U (\hat\pi(y)\otimes 1)\hat U^* \qquad (y\in C_0^u(\hat\G)). \]

Define $(\alpha,\beta)$ as in (i), where we choose $\xi_0$ and $\eta_0$ so that
$\omega_{\eta_0,\xi_0}\circ\theta = \mu$.
We then have that $\tilde\alpha(\xi) = U^*(\xi\otimes\xi_0)$
and $\tilde\beta(\xi) = U^*(\xi\otimes\eta_0)$, for $\xi\in L^2(\G)$.  Then
\begin{align*} (1\otimes\tilde\beta^*)W_{12}^*(1\otimes\tilde\alpha)W
&= (\iota\otimes\iota\otimes\omega_{\eta_0,\xi_0})
   \big( U_{23} W_{12}^* U_{23}^* W_{12} \big)
= (\iota\otimes\iota\otimes\omega_{\eta_0,\xi_0})
   \big( U_{23} (\Delta\otimes\iota)(U)^* \big) \\
&= (\iota\otimes\iota\otimes\omega_{\eta_0,\xi_0})\big( U_{23} U_{23}^* U_{13}^* \big)
= (\iota\otimes\iota\otimes\omega_{\eta_0,\xi_0})\big( U_{13}^* \big) \in
C^b(\G)\otimes 1, \end{align*}
as $U\in M(C_0(\G)\otimes\mc B_0(K))$, so the right slice of $U$ is in $M(C_0(\G))=C^b(\G)$.
Thus $(\alpha,\beta)$ is an invariant pair, inducing $L'_*\in\mc{CB}(L^1(\hat\G))$, say.

We wish to show that $L'_*$ is given by left multiplication by $\mu$.
Let $\hat\omega=\hat\omega_{\eta_1,\xi_1}\in L^1(\hat\G)$, so that
$\mu\iota(\hat\omega) \in \iota(L^1(\hat\G))$.
Let $\omega\in L^1_\sharp(\G)$, and set $x=\lambda(\omega)
\in C_0(\hat\G)$.  Then $\hat\pi(\lambda_u(\omega)) = x$, so
\begin{align*} \ip{x}{\iota^{-1}\big( \mu\iota(\hat\omega) \big)}
&= \ip{\mu\iota(\hat\omega)}{\lambda_u(\omega)}
= \ip{\mu \otimes \iota(\hat\omega)}{\hat\Delta_u(\lambda_u(\omega))}
= \ip{\iota(\hat\omega)\otimes \mu}{\sigma\hat\Delta_u(\lambda_u(\omega))} \\
&= \ip{\hat\omega \otimes \mu}{(\hat\pi\otimes\iota)(\sigma\hat\Delta_u(\lambda_u(\omega)))}
= \ip{\hat\omega \otimes \mu}{\hat U(\hat\pi(\lambda_u(\omega))\otimes1)\hat U^*} \\
&= \ip{\hat\omega\otimes\omega_{\eta_0,\xi_0}}{U(x\otimes1)U^*}
= \big( U^*(\eta_1\otimes\eta_0) \big| (x\otimes 1) U^*(\xi_1\otimes\xi_0) \big) \\
&= \big( \tilde\beta(\eta_1) \big| (x\otimes1)\tilde\alpha(\xi_1) \big)
= \ip{L(x)}{\hat\omega} = \ip{x}{L_*(\hat\omega)},
\end{align*}
as we hoped.  By density, this holds for all $x\in L^\infty(\hat\G)$, so that
$L'_* = L_*$ as required to show (i).

We next define $J_K$.  By \cite[Proposition~7.2]{kus1}, there is an anti-$^*$-automorphism
$\hat\kappa_u:C_0^u(\hat\G)\rightarrow C_0^u(\hat\G)$ which ``lifts'' $\hat\kappa$,
in the sense that $\hat\pi\hat\kappa_u = \hat\kappa\hat\pi$.  For each state
$\mu\in C_0^u(\hat\G)^*$,
let $\mu' = \hat\kappa_u^*(\mu)$, which is still a state, as $\kappa_u^*$ is an
anti-$*$-automorphism.  On each $H_\mu$, (densely) define $J_K$ by
\[ J_K\big( \theta_\mu(a) \xi_\mu \big) = \theta_{\mu'}( \hat\kappa_u(a^*) ) \xi_{\mu'}
\qquad (a\in C_0^u(\hat\G)). \]
Then, for $a\in C_0^u(\hat\G)$, we have
\[ \big\| J_K\big( \theta_\mu(a) \xi_\mu \big) \big\|^2 =
\ip{\mu'}{\hat\kappa_u(a)\hat\kappa_u(a^*)}
= \ip{\hat\kappa_u^*(\mu)}{\hat\kappa_u(a^*a)} = \ip{\mu}{a^*a}
= \big\| \theta_\mu(a) \xi_\mu \big\|^2. \]
Thus $J_K$ extends by linearity and continuity to all of $K$.  Clearly $J_K$
is an involution.  Then, for $a,b\in C_0^u(\hat\G)$, we have
\begin{align*}
J_K \theta(a^*) J_K \theta_\mu(b) \xi_\mu &=
J_K \theta_{\mu'}\big(a^* \hat\kappa_u(b^*)\big) \xi_{\mu'}
= \theta_\mu\big( \hat\kappa_u(\hat\kappa_u(b)a) \big) \xi_\mu \\
&= \theta_\mu\big( \hat\kappa_u(a) b \big) \xi_\mu
= \theta\big( \hat\kappa_u(a) \big) \theta_\mu(b) \xi_\mu.
\end{align*}
It follows that $\theta\hat\kappa_u(a) = J_K \theta(a^*) J_K$ for each
$a\in C^u_0(\hat\G)$.

Now define $(\gamma,\delta)$ as in (ii), so by the argument just given,
$(\gamma,\delta)$ is an invariant pair which induces the left multiplier given by
multiplication by $\omega_{J_K\xi_0,J_K\eta_0} \circ \theta \in C_0^u(\hat\G)^*$.
Now, for $x\in C_0^u(\hat\G)$,
\[ \ip{\omega_{J_K\xi_0,J_K\eta_0} \circ \theta}{x}
= (J_K\xi_0|\theta(x)J_K\eta_0) = (\eta_0|J_K\theta(x)^*J_K\xi_0)
= (\eta_0|\theta(\hat\kappa_u(x))\xi_0)
= \ip{\mu}{\hat\kappa_u(x)}. \]
Thus $(\gamma,\delta)$ gives the left multiplier induced by $\hat\kappa_u^*(\mu)$.
For $\hat\omega\in L^1(\hat\G)$, we have that $\iota(\hat\omega)\mu =
\iota(R_*(\hat\omega))$, and so
\[ \iota\big( \hat\kappa_* R_* \hat\kappa_*(\hat\omega) \big)
= \hat\kappa_u^* \iota\big( R_* \hat\kappa_*(\hat\omega) \big)
= \hat\kappa_u^* \big( \iota(\hat\kappa_*(\hat\omega)) \mu \big)
= \hat\kappa_u^*(\mu) \iota(\hat\omega). \]
Thus $(\gamma,\delta)$ gives $\hat\kappa_* R_* \hat\kappa_*(\hat\omega)$, showing (ii).
\end{proof}

Consider further $(\gamma,\delta)$ as in (ii) above.
By \cite[Proposition~7.2]{kus1} we have that $(\kappa\otimes\hat\kappa_u)(\hat U)=\hat U$.
As $U=(\iota\otimes\theta)(\hat U)$ and  $\theta\hat\kappa_u(\cdot)
= J_K \theta(\cdot)^* J_K$, we see that
\[ U = (\kappa\otimes\theta\hat\kappa_u)(\hat U)
= (J\otimes J_K) U^* (J\otimes J_K). \]
Now, we have that $\tilde\gamma(\xi) = U^*(\xi\otimes J_K\eta_0)$ for $\xi\in L^2(\G)$.
It follows that
\[ (J\otimes J_K) \tilde\gamma(\xi) = U(J\xi \otimes \eta_0) \qquad (\xi\in L^2(\G)), \]
and a similar formula holds for $\tilde\delta$.  Thus $\tilde\gamma$ and
$\tilde\delta$ are given by right slices of $U$; however, it is not clear what,
if any, meaning we can give to taking a right slice of $\mc U$.

\section{For two-sided multipliers}

In this final section, we look at two-sided multipliers.  Firstly, as we saw in
Section~\ref{multdual}, a two-sided multiplier $(L_*,R_*)\in M_{cb}(L^1(\hat\G))$
gives rise to represented multipliers, represented by the same $a\in C^b(\G)$.

Let $(L_*,R_*)\in M_{cb}(L^1(\hat\G))$, and recall the definitions of $L^\dagger_*$
and $R^\dagger_*$ from Section~\ref{usingopalg}.
For $\hat\omega,\hat\sigma\in L^1(\hat\G)$ we have that
\[ \hat\omega L_*^\dagger(\hat\sigma) = \big( \hat\omega^* L_*(\hat\sigma^*) \big)^*
= \big( R_*(\hat\omega^*) \hat\sigma^* \big)^* = R_*^\dagger(\hat\omega) \hat\sigma. \]
Thus the map $(L_*,R_*)\rightarrow (L_*^\dagger,R_*^\dagger)$ is a conjugate-linear, period
two algebra homomorphism from $M_{cb}(L^1(\G))$ to $M_{cb}(L^1(\G))$.
This map extends the map $L^1(\hat\G) \rightarrow L^1(\hat\G); \hat\omega \mapsto
\hat\omega^*$.  The following is easy to deduce from Theorem~\ref{thm:one}.

\begin{proposition}\label{prop:twelve}
The homomorphism $\hat\Lambda:M_{cb}(L^1(\hat\G))\rightarrow C^b(\G)$ maps into
$D(S^{-1}) = D(S)^*$.  Furthermore, for $(L_*,R_*) \in M_{cb}(L^1(\hat\G))$, we have
that $\hat\Lambda(L_*^\dagger,R_*^\dagger) = S(\hat\Lambda(L_*,R_*)^*)$.
\end{proposition}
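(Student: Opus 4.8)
The plan is to read everything off Theorem~\ref{thm:one}, working with the left components of the multipliers. Fix $(L_*,R_*)\in M_{cb}(L^1(\hat\G))$ and set $c=\hat\Lambda(L_*,R_*)\in C^b(\G)$, so that $L_*$ (and $R_*$) are represented by $c$. The computation immediately preceding the statement shows that $(L_*^\dagger,R_*^\dagger)$ is again a two-sided completely bounded multiplier, so $d:=\hat\Lambda(L_*^\dagger,R_*^\dagger)\in C^b(\G)$ is well-defined and represents $L_*^\dagger$. Since $\kappa$ is a bijection of $C^b(\G)$, I can write $d=\kappa(a)$ with $a=\kappa(d)$. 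This is exactly the data of Theorem~\ref{thm:one}(iii), with the representing element of $L_*$ being $b=c$ and that of $L_*^\dagger$ being $\kappa(a)=d$.

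Next I would invoke the ``furthermore'' clause of Theorem~\ref{thm:one}, which yields $a=\kappa(d)\in D(S^{-1})$ together with $c=\tau_{-i/2}(a^*)=\tau_{-i/2}(\kappa(d)^*)$. Because $\kappa$ is anti-$*$ we have $\kappa(d)^*=\kappa(d^*)$, and because $\kappa$ commutes with the scaling group $(\tau_t)$ and $S=\kappa\tau_{-i/2}$, we also have $S=\tau_{-i/2}\kappa$; hence $c=(\tau_{-i/2}\kappa)(d^*)=S(d^*)$. In particular $c$ lies in the range of $S$, i.e. $c\in D(S^{-1})$, and since $(L_*,R_*)$ was arbitrary this already establishes that $\hat\Lambda$ maps into $D(S^{-1})=D(S)^*$. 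Inverting, $d^*=S^{-1}(c)$, so $d=S^{-1}(c)^*$.

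To conclude I would use the standard compatibility of the antipode with the adjoint, $S^{-1}(x)^*=S(x^*)$, whose matching of domains is precisely the relation $D(S^{-1})=D(S)^*$ recorded in the statement. Applied to $x=c$ (legitimate since $c\in D(S^{-1})$, equivalently $c^*\in D(S)$) this gives $d=S^{-1}(c)^*=S(c^*)$, that is, $\hat\Lambda(L_*^\dagger,R_*^\dagger)=S(\hat\Lambda(L_*,R_*)^*)$, as desired.

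The bookkeeping with Theorem~\ref{thm:one} is routine; the one place needing genuine care is the interplay of $S$, $\kappa$, $\tau_{-i/2}$ and the $*$-operation. I expect the main obstacle to be verifying $S=\tau_{-i/2}\kappa$ (from $S=\kappa\tau_{-i/2}$ and the fact that $\kappa$ commutes with $(\tau_t)$) and the adjoint rule $S^{-1}(x)^*=S(x^*)$, which rests on the analytic generator satisfying $\tau_z(x)^*=\tau_{\bar z}(x^*)$; a mishandled conjugate in the parameter $-i/2$ here would swap $S$ and $S^{-1}$, and is the only serious trap.
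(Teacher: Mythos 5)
Your proposal is correct and is essentially the paper's own argument: the paper gives no separate proof of this proposition beyond the remark that it is ``easy to deduce from Theorem~\ref{thm:one}'', and your deduction---applying part (iii) of that theorem with $b=c=\hat\Lambda(L_*,R_*)$ and $\kappa(a)=d=\hat\Lambda(L_*^\dagger,R_*^\dagger)$ (legitimate since $(L_*^\dagger,R_*^\dagger)\in M_{cb}(L^1(\hat\G))$ is again represented), then rewriting $c=\tau_{-i/2}(\kappa(d)^*)=S(d^*)$ and inverting via $S^{-1}(x)^*=S(x^*)$---is exactly that deduction. The delicate points you flag (that $\kappa$ commutes with $(\tau_t)$, is anti-$*$, and that $\tau_z(x)^*=\tau_{\bar z}(x^*)$ gives the adjoint rule with the domain identification $D(S^{-1})=D(S)^*$) are all handled correctly.
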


Informally, this means that we can ``see'' the (unbounded) antipode at the level
of two-sided multipliers.  From the remarks after Theorem~\ref{thm:one} that
the image of $\hat\lambda$, and hence certainly 
the image of $\hat\Lambda$, is a strict core for $S$ (as an operator on $C^b(\G)$).
We remark that in the classical case, when $\G=G$ a locally compact group, then
$S$ is bounded, but $M_{cb}A(G)$ need not be norm dense in $C^b(G)$ (but it
is of course always strictly dense).

To finish, we make links with Section~\ref{sec:links_with_uni}, and show how our
consideration of $\mc L(A,A\otimes K)$ as a ``slice'' of $\mc L(A\otimes K)$
is more than a technical tool.

\begin{theorem}\label{thm:four}
Let $(\alpha,\beta)$ be an invariant pair in $\mc L(C_0(\G),C_0(\G)\otimes K)$.
There exists a contraction $\mc T\in\mc L(C_0(\G)\otimes K)$ and $\xi_0,\eta_0\in K$
with $\|\xi_0\|=\|\alpha\|$ and $\|\eta_0\|=\|\beta\|$ such that
$\alpha = \mc T(\iota\otimes\xi_0)^*$ and $\beta = \mc T(\iota\otimes\eta_0)^*$.
\end{theorem}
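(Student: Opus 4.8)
The plan is to read the two required identities as prescribing the single operator $\mc T$ on the submodule $C_0(\G)\otimes\lin\{\xi_0,\eta_0\}$ and extending it by zero elsewhere. Concretely, form the adjointable maps $\mc D,\mc N\in\mc L(C_0(\G)\oplus C_0(\G),C_0(\G)\otimes K)$ given by $\mc D(a,b)=a\otimes\xi_0+b\otimes\eta_0$ and $\mc N(a,b)=\alpha(a)+\beta(b)$; then $\alpha=\mc T(\iota\otimes\xi_0)^*$ together with $\beta=\mc T(\iota\otimes\eta_0)^*$ say exactly that $\mc N=\mc T\mc D$. At the level of $H=L^2(\G)$ this reads $T(\xi\otimes\xi_0)=\tilde\alpha(\xi)$ and $T(\xi\otimes\eta_0)=\tilde\beta(\xi)$, and a Hilbert-space contraction with these properties and the prescribed norms exists exactly when $\mc N^*\mc N\le\mc D^*\mc D$; here $\mc D^*\mc D$ is the Gram matrix of $\{\xi_0,\eta_0\}$ tensored with $\iota$, whereas $\mc N^*\mc N=\left(\begin{smallmatrix}\alpha^*\alpha & \alpha^*\beta\\ \beta^*\alpha & \beta^*\beta\end{smallmatrix}\right)$ has entries in $C^b(\G)$ (using $\tilde\beta^*\tilde\alpha=\beta^*\alpha$, as in Proposition~\ref{prop:six}). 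Realising $\mc T$ adjointably is the delicate point, addressed below.

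The crux is that invariance forces the off-diagonal entry $\beta^*\alpha\in C^b(\G)$ to be a scalar. Indeed, if $(\alpha,\beta)$ is invariant with respect to $b$, then in coordinates $\sum_i(1\otimes\beta_i^*)\Delta(\alpha_i)=b\otimes 1$; applying the counit $\varepsilon$ of $\G$ to the first leg and using $(\varepsilon\otimes\iota)\Delta=\iota$ yields $\beta^*\alpha=\sum_i\beta_i^*\alpha_i=\varepsilon(b)\,1=:c_0\,1$. (Equivalently, slice the first leg of the identity $(\iota\otimes L)(W^*)=(b\otimes 1)W^*$ from Proposition~\ref{prop:four} with $\varepsilon$, using $(\varepsilon\otimes\iota)(W)=1$ and $\beta^*\alpha=L(1)$.) In particular $\alpha^*\beta=\overline{c_0}\,1$ and $|c_0|=\|\beta^*\alpha\|\le\|\alpha\|\|\beta\|$.

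With this in hand I would choose $\xi_0,\eta_0\in K$ with $\|\xi_0\|=\|\alpha\|$, $\|\eta_0\|=\|\beta\|$ and $(\eta_0|\xi_0)=c_0$; this is possible because the resulting Gram matrix is positive by the Cauchy--Schwarz bound on $c_0$ (invariance degenerates it precisely when $K$ is too small to realise it). For this choice the off-diagonal entries of $\mc D^*\mc D$ and $\mc N^*\mc N$ cancel, so
\[ \mc D^*\mc D-\mc N^*\mc N=\left(\begin{smallmatrix}\|\alpha\|^2-\alpha^*\alpha & 0\\ 0 & \|\beta\|^2-\beta^*\beta\end{smallmatrix}\right)\ge 0. \]
To produce $\mc T$ itself I would fix an orthonormal basis $f_1,f_2$ of $\lin\{\xi_0,\eta_0\}$, solve the (invertible) linear relations expressing $\xi_0,\eta_0$ in this basis to obtain adjointable $\gamma_1,\gamma_2\in\mc L(C_0(\G),C_0(\G)\otimes K)$ with $\sum_j(\xi_0\text{-coeff})\gamma_j=\alpha$ and $\sum_j(\eta_0\text{-coeff})\gamma_j=\beta$, and set $\mc T=\gamma_1(\iota\otimes f_1)+\gamma_2(\iota\otimes f_2)$. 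This lies in $\mc L(C_0(\G)\otimes K)$, vanishes on $C_0(\G)\otimes\lin\{\xi_0,\eta_0\}^\perp$, and satisfies $\mc T(\,\cdot\,\otimes\xi_0)=\alpha$ and $\mc T(\,\cdot\,\otimes\eta_0)=\beta$ by construction; the displayed inequality gives $\|\mc T\|\le 1$. When $\xi_0,\eta_0$ are dependent the same recipe with a one-dimensional $\lin\{\xi_0,\eta_0\}$ works, since then $|c_0|=\|\alpha\|\|\beta\|$ forces $\alpha\propto\beta$.

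The step I expect to be the main obstacle is the adjointability of $\mc T$. An abstract Douglas-type factorisation of $\mc N=\mc T\mc D$ from $\mc N^*\mc N\le\mc D^*\mc D$ only yields a bounded operator on $H\otimes K$, and the Hilbert C$^*$-module analogue of Douglas' lemma fails in general; this is exactly why the explicit, finite-rank-in-$K$ description of $\mc T$ above is needed to keep it inside $\mc L(C_0(\G)\otimes K)\cong M(C_0(\G)\otimes\mc B_0(K))$. The only genuinely quantum ingredient is the scalarity of $\beta^*\alpha$; to make the counit slice rigorous one should note, via Theorem~\ref{thm:one}, that the representing element lies in a smooth domain (such as $D(S^{-1})$) on which the counit is well behaved.
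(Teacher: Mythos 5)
Your overall strategy matches the paper's, and the second half of your argument is essentially the paper's own construction: the paper takes orthogonal unit vectors $\xi_0,\xi_1$, sets $\eta_0=\overline{\epsilon}\xi_0+\delta\xi_1$ with $|\epsilon|^2+|\delta|^2=1$, and defines $\mc T=\alpha(\iota\otimes\xi_0)+\delta^{-1}(\beta-\overline{\epsilon}\alpha)(\iota\otimes\xi_1)$, which is exactly your ``solve the linear system in an orthonormal basis of $\lin\{\xi_0,\eta_0\}$'' recipe; contractivity is verified by the same Gram-matrix cancellation, and the degenerate case $|\epsilon|=\|\alpha\|\,\|\beta\|$ is handled by showing $\alpha$ and $\beta$ are proportional. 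Since $\mc T$ is a finite sum of maps of the form $\gamma_j(\iota\otimes f_j)$, it is manifestly adjointable, so your concern about the failure of Douglas factorisation in Hilbert C$^*$-modules is correctly resolved by this explicit form.

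The genuine gap is your proof of the crux fact that $\beta^*\alpha$ is a scalar. You obtain it by slicing the invariance identity with the counit $\varepsilon$ of $\G$, but a locally compact quantum group need not admit a bounded counit on $C_0(\G)$: the existence of a character $\varepsilon$ with $(\varepsilon\otimes\iota)\Delta=\iota$ is precisely coamenability of $\G$, which fails, for instance, when $C_0(\G)=C^*_r(F_2)$ (the trivial representation does not factor through the reduced C$^*$-algebra). The same objection applies to your alternative of slicing $(\iota\otimes L)(W^*)=(b\otimes 1)W^*$ with $\varepsilon$. Your proposed repair --- that the representing element lies in $D(S^{-1})$ by Theorem~\ref{thm:one} --- does not help: membership in the domain of the antipode or scaling group is a smoothness condition and has no bearing on whether $\varepsilon$ exists at all; moreover, $\varepsilon$ would need to be applied to the first leg of $\Delta(\alpha_i)$ for every $i$, not merely to $b$. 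The counit-free argument, which is the one the paper uses, runs as follows: $\beta^*\alpha=\tilde\beta^*\tilde\alpha=L(1)$, where $L$ is the adjoint of the left multiplier $L_*$ induced by $(\alpha,\beta)$ via Proposition~\ref{prop:two}; pairing against $\hat\omega\otimes\hat\sigma$ and using $L_*(\hat\omega\hat\sigma)=L_*(\hat\omega)\hat\sigma$ gives $\hat\Delta(L(1))=L(1)\otimes 1$; and then the fact that such elements are scalar multiples of $1$ --- a consequence of the invariance of the Haar weight, see (the von Neumann version of) \cite[Result~5.13]{kus} or \cite[Lemma~4.6]{aristov} --- yields $\beta^*\alpha\in\mathbb{C}1$. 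With this substitution, your proof is complete.
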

\begin{proof}
We shall suppose, by rescaling, that $\|\alpha\|=\|\beta\|\leq1$.
We first show that $\beta^*\alpha = \epsilon 1$ for some $\epsilon\in\mathbb C$
with $|\epsilon|\leq 1$.  Indeed, let $L_*\in\mc{CB}(L^1(\hat\G))$ be the
left multiplier induced by $(\alpha,\beta)$.  Then $\beta^*\alpha = \tilde\beta^*
\tilde\alpha = \tilde\beta^*(1\otimes 1)\tilde\alpha = L(1)$.  Now, for $\hat\omega,
\hat\sigma\in L^1(\hat\G)$, we have that
\[ \ip{\Delta(L(1))}{\hat\omega\otimes\hat\sigma}
= \ip{1}{L_*(\hat\omega \hat\sigma)} = \ip{1}{L_*(\hat\omega) \hat\sigma}
= \ip{\Delta(1)}{L_*(\hat\omega) \otimes \hat\sigma}
= \ip{L(1)\otimes 1}{\hat\omega\otimes\hat\sigma}. \]
Thus $\Delta(L(1)) = L(1)\otimes 1$.  It follows from (the von Neumann version of)
\cite[Result~5.13]{kus} (see also \cite[Lemma~4.6]{aristov}) that $L(1) \in
\mathbb C1$, as required.  As $\|\beta^*\alpha\|\leq1$, it follows that
$|\epsilon|\leq1$.

Suppose for now that $|\epsilon|<1$.  Let $\xi_0$ and $\xi_1$ be orthogonal
unit vectors in $K$.  Choose $\delta$ with $|\epsilon|^2 + |\delta|^2=1$; by our
assumption, $\delta\not=0$.  Set $\eta_0 = \overline\epsilon\xi_0 + \delta\xi_1$,
and define
\[ \mc T = \alpha(\iota\otimes\xi_0)
+ \delta^{-1} (\beta-\overline\epsilon\alpha) (\iota\otimes\xi_1). \]
Then $\mc T(\iota\otimes\xi_0)^* = \alpha$ and $\mc T(\iota\otimes\eta_0)^*
= \overline\epsilon\alpha + \delta\delta^{-1} (\beta-\overline\epsilon\alpha)
= \beta$, as required.
It remains to show that $\mc T$ is a contraction.  It suffices to show that
$\|\mc T(\tau)\|\leq\|\tau\|$ for all $\tau\in A\otimes K$ of the form
$T=a\otimes \xi_0 + b\otimes \xi_1$, for some $a,b\in C_0(\G)$.  Indeed,
as the span of $\xi_0$ and $\xi_1$ agrees with the span of $\xi_0$ and
$\eta_0$, we may suppose that $\tau = a\otimes\xi_0 + b\otimes\eta_0$.
Then $\mc T(\tau) = \alpha(a) + \beta(b)$, so
\begin{align*} \|\mc T(\tau)\|^2 &= (\alpha^*\alpha(a)|a) + (\beta^*\alpha(a)|b)
+ (b|\beta^*\alpha(a)) + (\beta^*\beta(b)|b) \\
&\leq \|a\|^2 + \overline\epsilon(a|b) + \epsilon(b|a) + \|b\|^2 \\
&= (a|a) + (\xi_0|\eta_0)(a|b) + (\eta_0|\xi_0)(b|a) + (b|b) \\
&= \big( a\otimes\xi_0 + b\otimes\eta_0 \big| a\otimes\xi_0 + b\otimes\eta_0 \big)
= \|\tau\|^2.
\end{align*}
Thus $\mc T$ is a contraction.

If $|\epsilon|=1$, then $\alpha$ must be an isometry, for if
$\|\alpha(a)\|<\|a\|$ for some $a\in C_0(\G)$, then $\|a\|>\|\beta^*\alpha(a)\|
= |\epsilon|\|a\|$, a contradiction.  Similarly, $\beta$ is an isometry.  It follows
that $(\alpha-\epsilon\beta)^*(\alpha-\epsilon\beta)=0$, showing that
$\alpha=\epsilon\beta$.  Hence in this case, we can simply set
$\eta_0 = \overline\epsilon \xi_0$ and $\mc T=\alpha(\iota\otimes\xi_0)$.
\end{proof}



If $\alpha=\mc T(\iota\otimes\xi_0)^*$ and $\beta = \mc T(\iota\otimes\eta_0)^*$,
then the proof of Proposition~\ref{prop:five} shows that
\[ (1\otimes\beta)^*(\Delta*\alpha) = (\iota\otimes\iota\otimes\omega_{\eta_0,\xi_0})
T_{23}^*W_{12}^* T_{23} W_{12}. \]
Hence invariance can be expressed directly at the level of $T$; this of course is
taking us very far from our analogies with $M_{cb}A(G)$ and Gilbert's result.
Let us finish by looking at two-sided multipliers.

\begin{theorem}
Let $(L_*,R_*)$ be a completely bounded two-sided multiplier of $L^1(\hat\G)$.
There exists a Hilbert space $K$ with an involution $J_K$,
$\mc T\in\mc L(C_0(\G)\otimes K)$, and $\xi_0,\eta_0\in K$ such that:
\begin{enumerate}
\item with $\alpha=\mc T(\iota\otimes\xi_0)^*$ and $\beta=\mc T(\iota\otimes\eta_0)^*$,
  we have that $(\alpha,\beta)$ is invariant, and induces $L_*$;
\item with $\gamma=\mc T(\iota\otimes J_K\eta_0)^*$ and
  $\delta=\mc T(\iota\otimes J_K\xi_0)^*$, we have that $(\gamma,\delta)$ is invariant,
  and induces $\hat\kappa_*R_*\hat\kappa_*$ (and thus, using Section~\ref{ontheright},
  induces $R_*$).
\end{enumerate}
\end{theorem}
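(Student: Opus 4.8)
The plan is to run the proof of the (unnumbered) theorem of Section~\ref{sec:links_with_uni} again, the one new feature being that a general completely bounded two-sided multiplier need not be multiplication by a functional in $C^u_0(\hat\G)^*$, so the unitary $\mc U$ of that argument has to be weakened to a contraction $\mc T$. First I would apply \cite[Theorem~8.9]{dawsm} to produce a single $a\in C^b(\G)$ representing both $L_*$ and $R_*$. Next I would feed the left multiplier $L_*$ into Theorem~\ref{thm:three}: its proof dilates $L=L_*^*$ as $L(x)=Q^*\pi(x)P$ for a normal $*$-representation $\pi$ of $L^\infty(\hat\G)$, forms the corepresentation $U=(\iota\otimes\pi)(W)$ via \cite{kus1}, and yields an invariant pair $(\alpha,\beta)$ inducing $L_*$. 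This supplies all the data behind part (i).

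The essential preparatory step is to install the involution $J_K$ \emph{before} building $\mc T$. Mirroring the $J_K$ construction in Section~\ref{sec:links_with_uni}, I would enlarge the dilation space $K_0$ to $K=K_0\oplus\overline{K_0}$, adjoining to $\pi$ a conjugate copy carrying the representation $x\mapsto\overline{\pi(\hat\kappa(x))^*}$. The resulting representation, still written $\pi$, then satisfies $J_K\pi(x)^*J_K=\pi(\hat\kappa(x))$ for $x\in L^\infty(\hat\G)$, where $J_K$ is the antilinear swap of the two summands, and $L$ still dilates through the enlarged $\pi$ by composing $P,Q$ with the inclusion of $K_0$. Because $J_K$ implements $\hat\kappa$ and $(\kappa\otimes\hat\kappa)(W)=W$, the corepresentation inherits the symmetry relation $U=(\hat J\otimes J_K)U^*(\hat J\otimes J_K)$, exactly as at the close of Section~\ref{sec:links_with_uni}; this is the identity that will drive part (ii).

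With $(\alpha,\beta)$ and $J_K$ in place I would package the pair into a single contraction $\mc T\in\mc L(C_0(\G)\otimes K)$ and vectors $\xi_0,\eta_0\in K$ with $\alpha=\mc T(\iota\otimes\xi_0)^*$ and $\beta=\mc T(\iota\otimes\eta_0)^*$, after which Proposition~\ref{prop:two} delivers part (i). For part (ii) I would set $\gamma=\mc T(\iota\otimes J_K\eta_0)^*$ and $\delta=\mc T(\iota\otimes J_K\xi_0)^*$ and compute, using $\tilde\gamma(\xi)=T(\xi\otimes J_K\eta_0)$ together with the symmetry of $U$, that the $J_K$-twist turns the invariance of $(\alpha,\beta)$ with respect to $a$ into invariance of $(\gamma,\delta)$ with respect to $\kappa(a)$ (the same passage from $a$ to $\kappa(a)$ seen in Proposition~\ref{prop:eight}). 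Since $\hat\kappa_*R_*\hat\kappa_*$ is, by Lemma~\ref{lem:ten}, the represented left multiplier with representative $\kappa(a)$, and $\hat\lambda$ is injective, Proposition~\ref{prop:two} identifies the multiplier induced by $(\gamma,\delta)$ with $\hat\kappa_*R_*\hat\kappa_*$, and hence, undoing $\hat\kappa$ as in Section~\ref{ontheright}, with $R_*$.

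The hard part will be reconciling Theorem~\ref{thm:four} with the involution. That theorem manufactures $\mc T$ by an ad hoc two-dimensional extension which is blind to $J_K$, so the twisted slices of such a $\mc T$ carry no reason to behave like a $\hat\kappa$-conjugate, and part (ii) can fail for it. The genuine content of the proof is therefore to produce a \emph{single} $\mc T$ that simultaneously realises $(\alpha,\beta)$ as in (i) and inherits the symmetry $U=(\hat J\otimes J_K)U^*(\hat J\otimes J_K)$; I expect to do this by building $\mc T$ directly out of $U$ and the symmetrised dilation maps $P,Q$, choosing the auxiliary orthonormal vectors of the Theorem~\ref{thm:four} step inside a $J_K$-compatible subspace, rather than by invoking Theorem~\ref{thm:four} as a black box. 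Once that single symmetric $\mc T$ is in hand, both (i) and (ii) reduce to computations already performed in Sections~\ref{sec:links_with_uni} and~\ref{usingopalg}.
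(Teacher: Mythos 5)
Your plan for part (i) is fine, and you have correctly located the crux: producing a \emph{single} $\mc T$ whose plain slices give $(\alpha,\beta)$ and whose $J_K$-twisted slices induce $\hat\kappa_*R_*\hat\kappa_*$. But the proposal never actually supplies this step, and the route you sketch for it --- making $\mc T$ inherit the symmetry $U=(\hat J\otimes J_K)U^*(\hat J\otimes J_K)$ from a $\hat\kappa$-symmetrised dilation --- does not work as stated for a general completely bounded multiplier. The computation you want to imitate from Section~\ref{sec:links_with_uni} depends essentially on the multiplier there being given by a \emph{vector} functional $\omega_{\eta_0,\xi_0}\circ\theta$ on $C_0^u(\hat\G)$: that is exactly why $\alpha$ and $\beta$ in that section are vector-slices of the single unitary $\mc U^*$, and why twisting the vectors by $J_K$ transports the multiplier along $\hat\kappa_u^*$. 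For a general $(L_*,R_*)$ the dilation of Theorem~\ref{thm:three} gives $L(x)=Q^*\pi(x)P$ with $P,Q$ \emph{operators} $L^2(\G)\to K$, not vectors, and the resulting invariant pair satisfies $\tilde\alpha(\xi)=U^*(1\otimes P)W(\xi\otimes\xi_0)$, $\tilde\beta(\xi)=U^*(1\otimes Q)W(\xi\otimes\xi_0)$: these are not vector-slices of anything built from $U$ alone. Doubling $K$ so that $J_K\pi(x)^*J_K=\pi(\hat\kappa(x))$ holds does nothing to make $P$ and $Q$ interact with $J_K$, so the symmetry of $U$ gives you no control over the twisted slices. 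Your closing remark that you would choose ``the auxiliary orthonormal vectors of the Theorem~\ref{thm:four} step inside a $J_K$-compatible subspace'' is a restatement of the theorem's content, not a proof of it.

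The paper's proof avoids corepresentation symmetry entirely and instead uses a gluing trick. Apply Theorem~\ref{thm:four} \emph{twice}: once to an invariant pair inducing $L_*$, giving $\mc T_1$ and $\xi_0^{(1)},\eta_0^{(1)}\in K_1$, and once to an invariant pair inducing $\hat\kappa_*R_*\hat\kappa_*$, giving $\mc T_2$ and $\xi_0^{(2)},\eta_0^{(2)}\in K_2$. Then set $K=K_1\oplus K_2$, let $\mc T$ be the block-diagonal operator $\mc T_1\oplus\mc T_2$, and \emph{define} $J_K$ to interchange the two summands, sending $\xi_0^{(1)}\mapsto\eta_0^{(2)}$ and (after matching up the remaining basis vectors) $\eta_0^{(1)}\mapsto\xi_0^{(2)}$; the twisted slices of $\mc T$ are then by construction the pair coming from the second application of Theorem~\ref{thm:four}. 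The only place two-sidedness enters is to show $L(1)=R(1)$, via $(\iota\otimes L)\hat\Delta=(R\otimes\iota)\hat\Delta$ applied to $1$; in the notation of the paper's proof this says $\epsilon_1=\epsilon_2$, and together with the free choice $\gamma_1=\overline{\gamma_2}$ this is precisely what makes the prescribed cross-block map a well-defined isometric antilinear involution. So to complete your argument you must either carry out your symmetric construction of $\mc T$ explicitly (which, per the first paragraph, faces a real obstruction), or switch to this two-fold application of Theorem~\ref{thm:four} and the cross-block definition of $J_K$, in which case the corepresentation machinery in your second paragraph becomes unnecessary.
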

\begin{proof}
By rescaling, suppose that $\|(L,R)\|_{cb}=1$, so that $\|L\|_{cb}\leq 1$ and
$\|R\|_{cb}\leq 1$.  Apply the previous theorem to an invariant pair which induces
$L_*$ to form $T_1\in\mc L(C_0(\G)\otimes K_1)$, say,
with $\xi_0^{(1)}, \eta_0^{(1)}\in K_1$.  Similarly, find
$T_2\in\mc L(C_0(\G)\otimes K_2)$ and $\xi_0^{(2)}, \eta_0^{(2)}\in K_2$ for
$\hat\kappa_*R_*\hat\kappa_*$.  Indeed, looking at the proof of Theorem~\ref{thm:four},
we have that $\xi_0^{(1)}$ and $\xi_1^{(1)}$ are orthogonal unit vectors,
and that $\eta_0^{(1)} = \overline\epsilon_1\xi_0^{(1)} + \gamma_1\xi_1^{(1)}$,
where $L(1) = \epsilon_1 1$.  We have a similar construction for $\hat\kappa_*R_*
\hat\kappa_*$; in particular, $\epsilon_2 1 = \hat\kappa R \hat\kappa(1) = R(1)$.
Now, that $(L_*,R_*)$ is a two-sided multiplier means that $\hat\omega L_*(\hat\sigma)
= R_*(\hat\omega)\hat\sigma$ for $\hat\omega,\hat\sigma\in L^1(\hat\G)$.
Equivalently, $(\iota\otimes L)\hat\Delta = (R\otimes\iota)\hat\Delta$, and so
\[ \epsilon_1 1\otimes 1 = 1 \otimes L(1) = (\iota\otimes L)\hat\Delta(1)
= (R\otimes\iota)\hat\Delta(1) = R(1)\otimes 1 = \epsilon_2 1\otimes 1, \]
showing that $\epsilon_1=\epsilon_2$.  Remember that we have a free choice for
$\gamma_1$ and $\gamma_2$, subject to the condition that $|\gamma_1|^2 =
1 - |\epsilon_1|^2 = 1 - |\epsilon_2|^2 = |\gamma_2|^2$.  We shall assume that
$\gamma_1 = \overline{\gamma_2}$.

Let $\{ \xi_0^{(1)}, \xi_1^{(1)} \} \cup \{ e_i \}$ be an orthonormal basis for
$K_1$, and let $\{ \xi_0^{(2)}, \xi_1^{(2)} \} \cup \{ f_i \}$ be an orthonormal basis
for $K_2$.  By embedding $K_1$ or $K_2$ in a larger Hilbert space, if necessary, we
may suppose that $\{e_i\}$ and $\{f_i\}$ are indexed by the same set.
Let $K = K_1 \oplus K_2$, and let $J_K$ be the unique involution on $K$ which
satisfies
\[ J_K\big( \xi_0^{(1)} \big) = \eta_0^{(2)}, \quad
J_K\big( \xi_1^{(1)} \big) = \gamma_1\xi_0^{(2)} - \epsilon_1\xi_1^{(2)},
\quad J_K(e_i) = f_i. \]
For this to make sense, we need that for all $a,b,c,d\in\mathbb C$, we have
\begin{align*} \overline{a} c + \overline{b} d
&= \big( a\xi_0^{(1)} + b\xi_1^{(1)} \big| c\xi_0^{(1)} + d\xi_1^{(1)} \big)
= \big( J_K(c\xi_0^{(1)} + d\xi_1^{(1)}) \big| J_K(a\xi_0^{(1)} + b\xi_1^{(1)}) \big) \\
&= \big( \overline{c}\overline{\epsilon_1}\xi^{(2)}_0 + \overline{c}\gamma_2\xi^{(2)}_1
   + \overline{d}\gamma_1\xi^{(2)}_0 - \overline{d}e^{it}\epsilon_1\xi^{(2)}_1 \big|
   \overline{a}\overline{\epsilon_1}\xi^{(2)}_0 + \overline{a}\gamma_2\xi^{(2)}_1
   + \overline{b}\gamma_1\xi^{(2)}_0 - \overline{b}e^{it}\epsilon_1\xi^{(2)}_1 \big) \\
&= (c\epsilon_1+d\overline{\gamma_1})
   (\overline{a}\overline{\epsilon_1}+\overline{b}\gamma_1)
   + (c\overline{\gamma_2}-de^{-it}\overline{\epsilon_1})
   (\overline{a}\gamma_2-\overline{b}e^{it}\epsilon_1) \\
&= \overline{a}c (|\epsilon_1|^2 + |\gamma_2|^2)
   + \overline{b}d(|\gamma_1|^2+|\epsilon_1|^2)
   + c\overline{b}(\gamma_1 - \overline{\gamma_2})\epsilon_1
   + \overline{a}d(\overline{\gamma_1} - \gamma_2)\overline{\epsilon_1}.
\end{align*}
This holds, as $\gamma_1 = \overline{\gamma_2}$, $\epsilon_1=\epsilon_2$, and
$|\epsilon_1|^2 + |\gamma_1|^2=1$.  Notice that
\begin{align*} J_K\big( \eta_0^{(1)} \big) &=
\epsilon_1 J_K\big( \xi_0^{(1)} \big) + \overline{\gamma_1} J_K\big( \xi_1^{(1)} \big)
= \epsilon_1 \eta^{(2)}_0 + \overline{\gamma_1}\big( \gamma_1 \xi^{(2)}_0
   - \epsilon_1\xi^{(2)}_1 \big) \\
&= \epsilon_1 \overline{\epsilon_1} \xi^{(2)}_0
   + \epsilon_1\overline{\gamma_1}\xi^{(2)}_0
   + \overline{\gamma_1}\big( \gamma_1 \xi^{(2)}_0 - \epsilon_1\xi^{(2)}_1 \big)
= \xi^{(2)}_0.
\end{align*}

We have that $C_0(\G)\otimes K = C_0(\G)\otimes K_1 \oplus C_0(\G)\otimes K_2$
for the obvious isomorphism.  Let
\[ \mc T = \begin{pmatrix} \mc T_1 & 0 \\ 0 & \mc T_2 \end{pmatrix}
\in \mc L(C_0(\G)\otimes K). \]
Then, with $\alpha = \mc T(\iota\otimes \xi^{(1)}_0)^*
= \mc T_1(\iota\otimes\xi^{(1)}_0)^*$ and $\beta = \mc T(\iota\otimes \eta^{(1)}_0)^*
= \mc T_1(\iota\otimes\xi^{(1)}_0)^*$, we have that $(\alpha,\beta)$ induces $L_*$.
Also, with
\[ \gamma = \mc T(\iota\otimes J_K\eta^{(1)}_0)^* =
\mc T(\iota\otimes \xi^{(2)}_0)^*, \quad
\delta = \mc T(\iota\otimes J_K\xi^{(1)}_0)^* =
\mc T(\iota\otimes \eta^{(2)}_0)^*, \]
we have that $(\gamma,\delta)$ induces $\hat\kappa_*R_*\hat\kappa_*$, as we hoped.
\end{proof}

While the formulas in the above theorem are nicely symmetric, the proof feels
a little like a ``trick'' (although it is far from being completely artificial, as
we do use that $L_*$ and $R_*$ interact as a two-sided multiplier).
It is still our belief that there should be a more elegant approach to two-sided
multipliers.

In particular, let us finish with a question.  Let $(\alpha,\beta)$ be an
invariant pair, leading to a left multiplier $L$.  Can we ``see'', at the level
of the maps $\alpha$ and $\beta$, when there is a right multiplier $R$ making
the pair $(L,R)$ a two-sided multiplier?

\bigskip
\noindent\textbf{Author's address:}
\parbox[t]{5in}{School of Mathematics,\\
University of Leeds,\\
Leeds LS2 9JT\\
United Kingdom}

\smallskip
\noindent\textbf{Email:} \texttt{matt.daws@cantab.net}

\end{document}